\newtheorem{thm}{Theorem}[section]
\newtheorem{cor}[thm]{Corollary}
\newtheorem{lem}[thm]{Lemma}
\newtheorem{prop}[thm]{Proposition}
\newtheorem{rem}[thm]{Remark}
\numberwithin{equation}{section}
\newcommand{\norm}[1]{\left\Vert#1\right\Vert}
\newcommand{\Norm}[1]{\left\Vert#1\right\Vert_{\dot{H}^1}}
\newcommand{\abs}[1]{\left\vert#1\right\vert}
\newcommand{\ab}{\alpha,\beta}
\newcommand{\brc}[1]{\left(   #1 \right) }
\newcommand{\set}[1]{\left\{#1\right\}}
\newcommand{\Jap}[1]{\left\langle#1\right\rangle}
\newcommand{\Japnb}{{\left\langle \nabla \right\rangle}}
\newcommand{\tu}{\tilde{u}}
\newcommand{\ep}{\varepsilon}
\newcommand{\Kab}{K_{\al,\beta}}
\newcommand{\Kcab}{{K}^c_{\al,\beta}}
\newcommand{\Kabp}{\mathcal{K}^+_{\al,\beta}}
\newcommand{\Kabn}{\mathcal{K}^-_{\al,\beta}}
\newcommand{\vab}{\varphi_{\al,\beta}}
\newcommand{\Hab}{H_{\al,\beta}}
\newcommand{\Hcab}{H^c_{\al,\beta}}
\newcommand{\Lab}{\mathcal{L}_{\al,\beta}}
\newcommand{\mab}{m_{\al,\beta}}
\newcommand{\mcab}{m^c_{\al,\beta}}
\newcommand{\tnj}{t_n^j}
\newcommand{\tnjj}{t_n^{j'}}
\newcommand{\Tnj}{T_n^j}
\newcommand{\Tnjj}{T_n^{j'}}
\def\snj{{\sigma^j_n}}
\def\sij{{\sigma^j_\infty}}
\def\taunj{{\tau^j_n}}
\def\tij{{\tau^j_\infty}}
\def\hij{h^j_\infty}
\def\utkn{\tilde{u}^k_n}
\def\xnj{x^j_n}
\def\xnjj{x_n^{j'}}
\newcommand{\vn}{v_n}
\newcommand{\vpn}{\varphi_n}
\newcommand{\hnj}{h_n^j}
\newcommand{\Unj}{U_n^j}
\newcommand{\Uij}{U^j_\infty}
\newcommand{\onk}{\omega_n^k}
\def\vj{v^j}
\newcommand{\hnjj}{h_n^{j'}}
\newcommand{\unj}{u_n^j}
\newcommand{\ujn}{u_{(n)}^j}
\newcommand{\ujjn}{u_{(n)}^{j'}}
\newcommand{\vnj}{v_n^j}
\newcommand{\K}{\mathcal{K}}
\newcommand{\pa}{\partial}
\newcommand{\al}{\alpha}
\newcommand{\vp}{\varphi}
\newcommand{\lag}{\langle}
\newcommand{\rag}{\rangle}
\newcommand{\sdg}{Schr\"odinger~}
\newcommand{\R}{\mathbb{R}}
\newcommand{\N}{\mathbb{N}}
\newcommand{\ra}{\rightarrow}
\newcommand{\dH}{\dot{H}}
\newcommand{\lf}{\left}
\newcommand{\rg}{\right}
\newcommand{\les}{\lesssim}
\newcommand{\C}{{\mathbb C}}
\newcommand{\beq}{\begin{equation}}
\newcommand{\eeq}{\end{equation}}
\newcommand{\ben}{\begin{eqnarray}}
\newcommand{\bsp}{\begin{split}}
\newcommand{\esp}{\end{split}}
\newcommand{\een}{\end{eqnarray}}
\newcommand{\beno}{\begin{eqnarray*}}
\newcommand{\eeno}{\end{eqnarray*}}
\begin{document}

\title[4D NLS with combined terms]{On the 4D Nonlinear Schr\"odinger equation with combined terms under the energy threshold}%

\author[Miao]{Changxing Miao}
\address{\hskip-1.15em Changxing Miao:
\hfill\newline Institute of Applied Physics and Computational
Mathematics, \hfill\newline P. O. Box 8009,\ Beijing,\ China,\
100088,} \email{miao\_changxing@iapcm.ac.cn}

\author[Zhao]{Tengfei Zhao}
\address{\hskip-1.15em Tengfei Zhao:
 \hfill\newline The Graduate School of China Academy of Engineering Physics, \hfill\newline P. O.
Box 2101,\ Beijing,\ China,\ 100088, } \email{zhao\_tengfei@yeah.net}

\author[Zheng]{Jiqiang Zheng}
\address{\hskip-1.15em Jiqiang Zheng \hfill\newline Universit\'e Nice Sophia-Antipolis, \hfill\newline 06108 Nice Cedex 02, France}
\email{zhengjiqiang@gmail.com, zheng@unice.fr}

\subjclass[2000]{Primary: 35L70, Secondary: 35Q55}

\keywords{ Nonlinear Schr\"{o}dinger Equation; Longtime dynamics;  Interaction Morawetz esimates
Scattering; Threshold Energy.}

\maketitle
\begin{abstract}
In this paper, we consider the longtime dynamics  of the solutions to focusing energy-critical Schr\"odinger equation with a defocusing energy-subcritical perturbation term under a ground state energy threshold in four spatial dimension.  This extends the results in
Miao et al. (Commun Math Phys 318(3):767-808, 2013, The dynamics of the NLS with
the combined terms in five and higher dimensions. Some topics in harmonic analysis and
applications, advanced lectures in mathematics, ALM34, Higher Education Press, Beijing,
pp 265-298, 2015) to four dimension without radial assumption and the proof of scattering
is based on the interaction Morawetz estimates developed in Dodson (Global well-posedness
and scattering for the focusing, energy-critical nonlinear Schr\"{o}inger problem in dimension
$d=4$ for initial data below a ground state threshold, arXiv:1409.1950), the main ingredients
of which requires us to overcome the logarithmic failure in the double Duhamel argument in
four dimensions.
\end{abstract}

\section{Introduction}

  In this paper, we consider the nonlinear \sdg  equation  with combined terms in four spatial dimension, i.e.
\begin{equation}\label{001}
\left\{
  \begin{aligned}
  i\partial_t u+\Delta u&=-|u|^2u+ |u|^{\frac{4}{3}}u,\quad (t,x)\in\R\times\R^4 \\
  u(0,x)&=\vp(x)\in H^1(\R^4),
  \end{aligned}
\right. \tag{$\mathrm{CNLS}$}
\end{equation}
where $\Delta=\sum\limits_{1\leq j\leq 4}\pa^2_{x_j}$  is the Laplace operator.

If $u$ is a solution to \eqref{001},
$$M(u)=\int_{\R^4} \abs{u}^2 dx  \text{\; and \;}
 E(u)=\int_{\R^4} \left[\frac{1}{2}\abs{\nabla u}^2 - \frac{1}{4} \abs{u}^4 +\frac{3}{10} \abs{u}^{\frac{10}{3}}\right] dx $$
are often said to be  the mass and energy of $u$ respectively. Through a standard method, one can check that the mass  and energy of a smooth solution to \eqref{001} conserve in time. The nonlinear term $\abs{u}^2u$ is $\dot{H}^1(\R^4)$ critical and
 $\abs{u}^{4/3}u$ is $\dot{H}^{1/2}(\R^4) $ critical according to the standard scaling analysis.

In general, the energy critical NLS is given by
\begin{equation}\label{002}
\left\{
  \begin{aligned}
  i\partial_t u+\Delta u&=\mu|u|^{\frac{4}{d-2}}u,\quad (t,x)\in\R\times\R^d \\
  u(0,x)&=\vp(x)\in \dH^1(\R^d)
  \end{aligned}
\right. \tag{$\mathrm{NLS}$}
\end{equation}
where $\mu=\pm1$ and  $d\geq 3$.  For the local well-posedness of
\eqref{002} in $\dot{H}^1(\R^d)$ or $H^1(\R^d)$,
 we refer to \cite{Cazenave2003,CW-MM-1988,CW-NLA-1990}.

If $\mu=1$, we call the Cauchy problem \eqref{002} is defocusing.
There are many results in the literature considering the defocusing cases. In \cite{Bourgain-JAMS-1999},
Bourgain first developed a method of reduction of energy and proved that any solution to  defocusing \eqref{002}
with radial initial data is scattering  in spatial dimension three. Colliander. etc.\cite{CKSTT-Annals-2008}
 removed the radial assumption by exploiting  a interaction Morawetz estimate, and we refer \cite{K-V-A&P2012} for
  another proof, which is based on the long time Strichartz estimates. This result was extended
  in  Ryckman and Visan \cite{R-V-AJM-2007} for dimension four (for another proof, we refer to
   \cite{Visan-IMRN-2012})and Visan \cite{Visan-Duke-2007} for five and higher dimensions by using the Morawetz estimates.

For the focusing case $(\mu=-1)$, Kenig and Merle
\cite{Kenig-Merle-Invention2006} gave out a decomposition of the
region if the energy of the solutions under a ground state
threshold for radial solutions and they proved that in the
dimensions $d\in\{3,4,5\}$, one of the two regions for global
well-posedness and scattering and the other region for finite time blow-up. The corresponding
results in the cases of five and higher dimensions were proved by Killip and Visan in \cite{K-V-AJM2010}
without radial assumption by employing the double Duhamel formula trick. Dodson \cite{Dodson-2014}
obtained  global well-posedness and scattering  results in dimension four under a ground state threshold
by developing the  long time Strichartz estimates.  But the analog nonradial case of dimension three remains open up to now.

For scattering results of the defocusing energy-subcritical NLS,
we refer to \cite{CKSTT-CPAM-2004,Dodson-2014-algebraic,Dodson-AJM-2016, Nakanishi-JFA1999,Nakanishi-TM2001,TVZ-Duke2007}
and the reference therein.

There are also a quantity of results for the Cauchy problem \eqref{002} with a
energy-subcritical nonlinearity perturbation.
In \cite{TVZ-CPDE2007}, Tao, Visan, and Zhang proved
the scattering results in $H^1(\R^d)$ of defocusing  \eqref{002}  with a defocusing perturbation
$|u|^p u$  $(\frac{4}{d}\leq p<\frac{4}{d-2} \text { and }d\geq 3)$ or with focusing  $|u|^p u$
$(\frac{4}{d}\leq p<\frac{4}{d-2} \text { and }d\geq 3)$ and  the small mass condition.
The latter results was extended in Killip, Oh, Pocovnicu, and Visan \cite{KOPV-2014},
where the scattering results were obtained  for the solutions to the  defocusing \eqref{002} with a
focusing term $|u|^2u$ in three spatial dimension if their initial datum belong to a certain region given by rescaling.

Miao, Xu, and  Zhao \cite{MXZ-1} proved scattering and finite time blowup results
for the focusing \eqref{002} perturbed by a $\dot{H}^{1/2}$-critical
defocusing term  in spatial dimension three. More precisely, for every radial initial
data $u_0\in H^1(\R^3)$ below a ground state threshold, the corresponding solution
$u$ is globally well-posed and scattering, if its energy functional is nonnegative or blows up at finite time, if its energy functional is negative.
And in \cite{MXZ-2}, they extends the result to five and higher spatial dimension without the radial assumption.
We refer to \cite{AIKN-DIE-2011,AKIN-SM-2013,Nawa2015} for the  focusing energy critical NLS with some focusing perturbation terms.


%

In this article, we consider the longtime dynamics behavior of \eqref{001} below the energy threshold.
First, we consider the variational derivation of the energy. As in \cite{Nakanishi-2010}, we denote
$$\vp^\lambda_{\al,\beta}(x)=e^{\al\lambda}\vp(e^{-\beta\lambda} x),$$
  where $(\al,\beta,\lambda)\in \R^3$ and  $\vp:\R^4\ra \mathbb{C}$. We will restrict $(\al,\beta)$ in a region,
\begin{equation*}
 \Omega=\set{(\al,\beta):\al\geq0,\;\;5\al+6\beta\geq0,\;(\al,\beta)\neq(0,0)}.
\end{equation*}
And for any functional $F$ of $H^1(\R^4)$, we define the variation derivation  by
$$\mathcal{L}_{\al,\beta} F(\phi)=\left. \frac{d}{d\lambda}    \right\arrowvert _{\lambda=0} F( \phi^\lambda_{\al,\beta}).$$
For each $(\al,\beta) \in \Omega $, we define
$$K_{\al,\beta}(\phi)= \mathcal{L}_{\al,\beta} E(\phi)=  (\al+\beta)\int \left[|\nabla\phi(x)|^2  -|\phi(x)|^4\right] dx
+\Big(\al+\frac{6}{5}\beta\Big)\int |\phi(x)|^\frac{10}{3} dx ,$$
whenever $\phi$ is a function in $H^1(\R^4)$. Consider the
minimization problem
$$m_{\al,\beta}=\inf\{E(\vp):\vp\in H^1(\R^4)\setminus\{0\},\;\; \Kab(\vp)= 0 \}.$$
Since  the nonlinear term in \eqref{001} is $\dot{H}^1$-critical growth with the
$\dot{H}^1$-subcritical perturbation, we will use the modified
energy later
$$E_c(u)=\int_{\R^4}\Big(\frac{1}{2}\abs{\nabla u}^2 - \frac{1}{4} \abs{u}^4\Big)\;dx.$$

In this paper we will study the solutions to \eqref{001} which start from the following two regions below the minimum $m_{\ab}$,
\beq
\mathcal{K}^+_{\ab}=\set{\vp\in H^1(\R^4):  E(\vp)<m_{\ab},\;\Kab(\vp)\geq0 }
\eeq
and
\beq
\mathcal{K}^-_{\ab}=\set{\vp\in H^1(\R^4):  E(\vp)<m_{\ab},\;\Kab(\vp)<0 },
\eeq
where $(\al,\beta)\in \Omega.$%

Before stating  the main theorem, we give the definition of
scattering which will be used later. A global solution $u$ to
\eqref{001} with $u(0)=u_0\in H^1(\R^4)$ is scattering, if there
exist  $u_\pm \in H^1(\R^4)$ such that
\begin{equation*}
\lim_{t\ra \pm\infty} \norm{u(t)-e^{it\Delta} u_\pm}_{H^1(\R^4)}=0.
\end{equation*}
Now we state our main theorem, which describes the dynamics of
\eqref{001} under the threshold $m_{\ab}$.
\begin{thm}\label{thm:main} Let $(\al,\beta)\in\Omega$.
For each $\vp\in \K^+_{\ab}$, we have that if $u$ is the solution to
\eqref{001} with  $u(0,x)=\vp$, then $u$ is global well-posed and
scattering in $H^1(\R^4)$. On the other hand, if $\vp$ is a radial
function  in $\K^-_{\ab}$, then the solution $u$ with the initial
data $\vp$ will blow up at finite time.
\end{thm}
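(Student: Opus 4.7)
The plan is to follow the Kenig--Merle concentration--compactness road map, adapted to the combined-nonlinearity setting in four dimensions, and to bypass the logarithmic failure of the double Duhamel argument by invoking Dodson's interaction Morawetz/long-time Strichartz strategy. I would split the argument into (i) a variational analysis around $m_{\alpha,\beta}$, (ii) a concentration-compactness reduction for the $\K^+_{\ab}$ part, (iii) the exclusion of the resulting critical element via a Morawetz bound, and (iv) a localised virial argument for the $\K^-_{\ab}$ part.

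\emph{Variational preliminaries and invariance.} The first step is to show that both $\K^+_{\ab}$ and $\K^-_{\ab}$ are invariant under the flow of \eqref{001}. Since $K_{\al,\beta}$ is the linearisation of $E$ along the scaling $\varphi^\lambda_{\al,\beta}$, and since $m_{\al,\beta}$ is the infimum of $E$ on the nontrivial zero set of $K_{\al,\beta}$, a continuity argument based on mass--energy conservation shows that $K_{\al,\beta}(u(t))$ cannot change sign while $E(u(t))<m_{\al,\beta}$. On $\K^+_{\ab}$ one furthermore extracts a coercive lower bound for a linear combination of $\|\nabla u\|_{L^2}^2$ and $\|u\|_{L^{10/3}}^{10/3}$, which yields a uniform-in-time $H^1$ bound and therefore global existence. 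One also needs to compare $m_{\al,\beta}$ with the ground-state threshold $m^c_{\al,\beta}$ of the energy-critical energy $E_c$, which relates our problem to the sharp Sobolev constant governing Dodson's threshold.

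\emph{Concentration-compactness reduction for $\K^+_{\ab}$.} Assuming that scattering fails for some $\varphi\in\K^+_{\ab}$, one sets up a linear profile decomposition in $H^1(\R^4)$ adapted to both the energy-critical scale (responsible for possible concentration of the $|u|^2u$ term) and the $\dot H^{1/2}$-critical scale of the subcritical perturbation. Combined with a stability/perturbation lemma for \eqref{001}, this produces a minimal non-scattering solution; propagating the variational information through the decomposition ensures that this critical element remains in $\K^+_{\ab}$, hence is almost periodic modulo spatial translations (no Galilean or scaling modulation survives, since the combined nonlinearity breaks the scaling invariance).

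\emph{Exclusion of the critical element.} This is where I expect the main obstacle to arise, and where Dodson's framework enters. The strategy is to prove a long-time Strichartz estimate for \eqref{001}, frequency-localised to the scales picked out by the almost-periodic compactness modulus, and then to couple it with an interaction Morawetz identity. The $\dot H^{1/2}$-subcritical term $|u|^{4/3}u$ contributes with the favourable sign in the Morawetz expansion, so the plan is to absorb it up to errors that, thanks to the uniform $H^1$ bound and the long-time Strichartz control, are acceptable. The hard part is the logarithmic loss in 4D intrinsic to the double Duhamel/Morawetz pairing: one must refine Dodson's frequency-localised Morawetz bound so that the nonhomogeneous nature of the nonlinearity $-|u|^2u+|u|^{4/3}u$ does not destroy the $o(T)$ decay that drives the argument. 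Once this refined Morawetz inequality is available, the standard consequence forces the critical element to be identically zero, contradicting its non-scattering.

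\emph{Blow-up for radial $\K^-_{\ab}$.} For this part I would use a localised virial identity $V_R(t)=\int \chi_R(x)|u(t,x)|^2\,dx$, with $\chi_R$ a radial cutoff of the weight $|x|^2$ at scale $R$. A direct computation gives $V_R''(t)=8K_{1,0}(u(t))+\text{error}_R(t)$, and the choice of $(\al,\beta)\in\Omega$ together with the invariance of $\K^-_{\ab}$ and the coercivity of $-K_{\al,\beta}$ away from zero yields $K_{1,0}(u(t))\leq -\delta$ for some $\delta>0$ depending on $m_{\al,\beta}-E(\varphi)$. Radiality is used precisely to bound the truncation error $\text{error}_R(t)$ in four space dimensions via a Strauss-type radial Sobolev embedding. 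Integrating in time produces a contradiction with $V_R\geq 0$, forcing finite-time blow-up.
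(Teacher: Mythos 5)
Your overall road map (variational invariance of $\K^\pm_{\ab}$, concentration--compactness reduction, exclusion of the critical element by an interaction Morawetz argument, localized virial plus radial Sobolev for $\K^-_{\ab}$) matches the paper, and your blow-up step is essentially the paper's Section 3. But there are two genuine gaps in the scattering half.

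First, your claim that ``no Galilean or scaling modulation survives, since the combined nonlinearity breaks the scaling invariance'' is not correct as stated and skips a step the paper cannot avoid. An $H^1$-bounded non-scattering sequence can still concentrate at small scales through the $\dot H^1$-critical part of the nonlinearity, so the linear profile decomposition must carry scaling parameters $h_n^j\in(0,1]$ (either $\equiv 1$ or $\to 0$). The broken scale invariance only means that a concentrating profile asymptotically solves the \emph{pure} cubic energy-critical NLS; it does not forbid such profiles. The paper rules them out by observing that a concentrating critical profile would be a non-scattering solution of $i\partial_t u+\Delta u+|u|^2u=0$ with $E_c<E_c(W)$ and $\|\nabla u\|_{L^2}<\|\nabla W\|_{L^2}$, contradicting Dodson's 4D non-radial energy-critical theorem. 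Without this step (or an equivalent one) your reduction to a translation-only almost periodic critical element is unjustified.

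Second, at the extinction step you defer exactly the point that has to be proved: you propose frequency-localized long-time Strichartz estimates coupled with a refined frequency-localized interaction Morawetz bound, and you acknowledge but do not resolve the 4D logarithmic loss for the combined nonlinearity. The paper deliberately avoids this machinery: since the problem is posed in $H^1(\R^4)$, the critical element has conserved, finite mass, and the exclusion is done with an unlocalized interaction Morawetz potential $M_R(t)$ averaged in $R$ with weight $dR/R$ over $1\le R\le R_0\le K^{1/5}$. The averaging produces a $\log R_0$ gain on the main term $\int |u(y)|^2|u(x)|^4$, the a priori bound $\sup_t|M_R(t)|\lesssim R^4$ and mass conservation control the boundary and error terms by $O(K)$, a pointwise Galilean change of phase (depending on $(s,t)$) kills the momentum cross terms, and the sharp Sobolev inequality together with $\|\nabla u\|_{L^2}\le(1-\bar\delta)\|\nabla W\|_{L^2}$ from the energy trapping gives the needed coercivity of $|\nabla u|^2-|u|^4$ after localization; the compactness modulus then forces $u\equiv 0$. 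So the specific technical core of your plan (a long-time Strichartz estimate for the combined equation and a refinement of Dodson's frequency-localized Morawetz inequality) is both unproved in your sketch and unnecessary once one exploits the bounded mass; as written, your extinction step is a statement of intent rather than an argument.
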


\begin{rem}
By using the same argument in this paper one can also obtain the corresponding results of  \eqref{001} if $|u|^{4/3}u$ is replaced by  some more general defocusing $|u|^pu(1<p<2)$ terms.
\end{rem}

To prove the main theorem, we need the following property of the minimal $m_{\al,\beta}$.

\begin{prop}\label{prop:minima}
For $(\al,\beta)\in \Omega$, we have $$m_{\ab}=E_c(W)
=\frac{1}{4}\norm{\nabla W}^2_{L^2(\R^4)},$$ where
$W(x)$  is the ground state of the
massless equation $-\Delta \vp= |\vp|^2\vp$,
given by $W(x)=\big(1+\frac{|x|^2}8\big)^{-1}$.
\end{prop}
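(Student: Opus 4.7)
The identity $E_c(W)=\tfrac14\|\nabla W\|_{L^2}^2$ is immediate: multiplying $-\Delta W=W^3$ by $W$ and integrating yields $\|\nabla W\|_{L^2}^2=\|W\|_{L^4}^4$, so $E_c(W)=\tfrac12\|\nabla W\|_{L^2}^2-\tfrac14\|W\|_{L^4}^4=\tfrac14\|\nabla W\|_{L^2}^2$. It remains to prove the two inequalities $\mab\ge \tfrac14\|\nabla W\|_{L^2}^2$ and $\mab\le \tfrac14\|\nabla W\|_{L^2}^2$. A useful preliminary observation is that $\alpha+\beta>0$ everywhere on $\Omega$: if $\alpha=0$, then $5\alpha+6\beta\ge 0$ together with $(\alpha,\beta)\ne(0,0)$ forces $\beta>0$; if $\alpha>0$, then $\beta\ge -5\alpha/6>-\alpha$.

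For the lower bound, I would exploit the constraint $\Kab(\varphi)=0$ to eliminate the $|\varphi|^4$-term in $E$: since $\alpha+\beta>0$, one can solve $\int(|\nabla\varphi|^2-|\varphi|^4)\,dx=-\tfrac{\alpha+6\beta/5}{\alpha+\beta}\int|\varphi|^{10/3}\,dx$ and substitute into $E$, obtaining after a short algebraic manipulation
\[
E(\varphi)=\tfrac14\|\nabla\varphi\|_{L^2}^2+\frac{\alpha}{20(\alpha+\beta)}\|\varphi\|_{L^{10/3}}^{10/3}\ \ge\ \tfrac14\|\nabla\varphi\|_{L^2}^2,
\]
because $\alpha/(20(\alpha+\beta))\ge 0$. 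The same constraint forces $\|\nabla\varphi\|_{L^2}^2\le \|\varphi\|_{L^4}^4$ (using $\alpha+6\beta/5=(5\alpha+6\beta)/5\ge 0$). Feeding this into the sharp Sobolev inequality $\|\varphi\|_{L^4}^4\le \|\nabla W\|_{L^2}^{-2}\|\nabla\varphi\|_{L^2}^4$ (whose extremizer is $W$) yields $\|\nabla\varphi\|_{L^2}^2\ge \|\nabla W\|_{L^2}^2$, so $E(\varphi)\ge \tfrac14\|\nabla W\|_{L^2}^2$, giving $\mab\ge \tfrac14\|\nabla W\|_{L^2}^2$.

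For the upper bound, since $W\notin L^2(\R^4)$ one cannot test the infimum on $W$ directly, so I would build a minimizing sequence from truncated, concentrated copies of $W$. Fix $\chi\in C_c^\infty(\R^4)$ with $\chi\equiv 1$ in a neighborhood of the origin, and set $\psi_n(x)=\chi(x)\,nW(nx)$. A change of variables combined with dominated convergence (using $\nabla W\in L^2$ and $W\in L^4\cap L^{10/3}$) gives $\|\nabla\psi_n\|_{L^2}^2\to \|\nabla W\|_{L^2}^2$, $\|\psi_n\|_{L^4}^4\to \|W\|_{L^4}^4$ and $\|\psi_n\|_{L^{10/3}}^{10/3}=O(n^{-2/3})\to 0$; in particular $\Kab(\psi_n)\to 0$ and $E(\psi_n)\to \tfrac14\|\nabla W\|_{L^2}^2$. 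To land exactly on $\{\Kab=0\}$, I would rescale $\psi_n\mapsto \psi_n^{\lambda_n}$. A direct computation shows $\partial_\lambda \Kab(\psi_n^\lambda)\big|_{\lambda=0}\to -2(\alpha+\beta)^2\|\nabla W\|_{L^2}^2\ne 0$, so the implicit function theorem produces $\lambda_n=O(|\Kab(\psi_n)|)\to 0$ with $\Kab(\psi_n^{\lambda_n})=0$, and continuity of $E$ in $(\psi_n,\lambda_n)$ gives $E(\psi_n^{\lambda_n})\to \tfrac14\|\nabla W\|_{L^2}^2$, proving $\mab\le E_c(W)$.

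The main obstacle I anticipate is the upper-bound construction: unlike in dimensions $d\ge 5$ where $W\in L^2$ permits a direct variational ansatz, the concentrating truncation here must be analyzed with enough care to guarantee both $\Kab(\psi_n)\to 0$ and the correcting rescaling $\lambda_n\to 0$ simultaneously. Nondegeneracy of $\partial_\lambda \Kab$ along the sequence reduces cleanly to $\alpha+\beta>0$, which is the structural content of the admissibility region $\Omega$.
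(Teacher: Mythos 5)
Your proposal is correct, and it proves the proposition by a genuinely different route than the paper. You argue directly on the original constrained problem: for the lower bound you use $\alpha+\beta>0$ to solve $K_{\ab}(\vp)=0$ for $\int(|\nabla\vp|^2-|\vp|^4)$, substitute to get $E(\vp)=\tfrac14\|\nabla\vp\|_{L^2}^2+\tfrac{\alpha}{20(\alpha+\beta)}\|\vp\|_{L^{10/3}}^{10/3}$, and combine the resulting inequality $\|\nabla\vp\|_{L^2}^2\le\|\vp\|_{L^4}^4$ (valid since $5\alpha+6\beta\ge0$) with the sharp Sobolev inequality to force $\|\nabla\vp\|_{L^2}\ge\|\nabla W\|_{L^2}$; for the upper bound you build an explicit minimizing sequence from truncated concentrating copies $\chi(x)\,nW(nx)$ (needed since $W\notin L^2(\R^4)$ in four dimensions), corrected onto the constraint manifold by a small $(\al,\beta)$-rescaling via nondegeneracy of $\partial_\lambda K_{\ab}$, whose limiting value $-2(\alpha+\beta)^2\|\nabla W\|_{L^2}^2$ you compute correctly. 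The paper instead never touches $W$ until the very end: it passes through a chain of equivalent minimizations (Lemmas \ref{lem:equi11} and \ref{lem:equivl}), relaxing the constraint to $K_{\ab}\le0$ and $K_{\ab}<0$, replacing $E$ by the positive functionals $H_{\ab}$, $H^c_{\ab}$, and removing the $L^{10/3}$-term by the $\dot H^1$-invariant scaling $\vp^\lambda_{1,-1}$, $\lambda\to+\infty$ (the same scaling limit your concentration ansatz realizes concretely), reducing everything to $\inf\{E_c(\vp):\|\nabla\vp\|_{L^2}^2=\|\vp\|_{L^4}^4\}$, which is then evaluated by the sharp Sobolev constant together with density of $H^1$ in $\dot H^1$ (Lemma \ref{m=EcW}). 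Your argument is shorter and more self-contained for this single proposition; the paper's reformulations buy more than the value of $m_{\ab}$, since the relaxed characterizations and the functionals $H_{\ab}$, $H^c_{\ab}$, $K^c_{\ab}$ are reused later (independence of $(\al,\beta)$, the uniform bounds of Lemma \ref{uniform bound}, the energy-trapping Lemma \ref{lem:keylemma}, and the $E_c$/$\mathcal{K}^+_c$ comparison in the profile decomposition). Two small points you should spell out when writing this up: the cross term $\nabla\chi\cdot nW(n\cdot)$ in $\|\nabla\psi_n\|_{L^2}$ vanishes in the limit (e.g.\ by H\"older on the fixed compact support of $\nabla\chi$ and $W\in L^4$), and the implicit-function/intermediate-value step should note that $\lambda\mapsto K_{\ab}(\psi_n^\lambda)$ is an explicit finite combination of exponentials whose coefficients converge, so the nondegeneracy is uniform in $n$ on a fixed neighborhood of $\lambda=0$.
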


We remark that this proposition implies that $m_{\al,\beta}$ is
independent of $(\ab)$. We also have the properties of
$\K^+_{\al,\beta}$ and $\K^-_{\al,\beta}$,
\begin{prop}\label{prop:K-independent}
The regions $\K^+_{\ab}$ and $\K^-_{\ab}$ do not depend on $(\al,\beta)$ if $(\al,\beta)\in \Omega.$
\end{prop}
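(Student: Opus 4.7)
The plan is to leverage Proposition~\ref{prop:minima}, which guarantees that the threshold $m := m_{\alpha,\beta} = E_c(W)$ is one common constant over all $(\alpha,\beta) \in \Omega$. In particular, the energy condition $E(\varphi) < m_{\alpha,\beta}$ defining $\mathcal{K}^{\pm}_{\alpha,\beta}$ is already independent of $(\alpha,\beta)$, so only the sign condition on $K_{\alpha,\beta}(\varphi)$ remains to be analyzed. The strategy is to combine continuity of $K_{\alpha,\beta}(\varphi)$ in $(\alpha,\beta)$, convexity of $\Omega$, and an intermediate value argument that, whenever $K$ would have to change sign, produces a competitor for the infimum $m_{\alpha^{*},\beta^{*}}$.

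First I would record two elementary facts. The set $\Omega$ is convex: the defining inequalities $\alpha \geq 0$ and $5\alpha + 6\beta \geq 0$ are each preserved under convex combinations, and any convex combination of two elements of $\Omega$ has nonnegative $\alpha$ and nonnegative $5\alpha + 6\beta$, so it can only vanish if both original pairs vanish, which is excluded. Second, for each fixed $\varphi \in H^1(\R^4)$, the map $(\alpha,\beta) \mapsto K_{\alpha,\beta}(\varphi)$ is visibly linear in $(\alpha,\beta)$, hence continuous along any line segment in $\Omega$.

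Suppose now for contradiction that $\varphi \in \mathcal{K}^{+}_{\alpha,\beta}$ but $\varphi \notin \mathcal{K}^{+}_{\alpha',\beta'}$ for some $(\alpha',\beta') \in \Omega$. Since the energy bound $E(\varphi) < m$ is common, the failure must come from $K_{\alpha',\beta'}(\varphi) < 0$, and in particular $\varphi \neq 0$. Moreover $K_{\alpha,\beta}(\varphi) > 0$ strictly, since equality with $\varphi \neq 0$ would by definition of $m_{\alpha,\beta}$ yield $m \leq E(\varphi) < m$. Applying the intermediate value theorem along the segment from $(\alpha,\beta)$ to $(\alpha',\beta')$, which lies in $\Omega$ by convexity, produces a point $(\alpha^{*},\beta^{*}) \in \Omega$ at which $K_{\alpha^{*},\beta^{*}}(\varphi) = 0$. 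Invoking Proposition~\ref{prop:minima} at this point then gives $m = m_{\alpha^{*},\beta^{*}} \leq E(\varphi) < m$, a contradiction. The argument for $\mathcal{K}^{-}$ is entirely symmetric: if $K_{\alpha,\beta}(\varphi) < 0$ while $K_{\alpha',\beta'}(\varphi) \geq 0$, either equality at $(\alpha',\beta')$ immediately contradicts the minimum bound, or strict positivity there again permits the same intermediate value zero.

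The only genuine point to be careful about — essentially the one place where something could go wrong — is ensuring that the segment realizing the intermediate value stays inside $\Omega$, so that Proposition~\ref{prop:minima} applies at the interior point where $K$ vanishes. This is handled by the convexity check in the first paragraph above, after which the rest of the argument is purely formal.
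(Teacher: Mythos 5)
Your proof is correct, but it follows a genuinely different route from the paper's. The paper argues dynamically in the scaling parameter: assuming $\vp\in\Kabp\cap\mathcal{K}^-_{\al',\beta'}$, it runs the rescaling flow $\lambda\mapsto\vp^{\lambda}_{\al,\beta}$ for $\lambda\le 0$, uses that $\mathcal{L}_{\al,\beta}E=K_{\al,\beta}>0$ keeps $E(\vp^{\lambda}_{\al,\beta})<m$ as $\lambda$ decreases (a small bootstrap), notes $\vp^{\lambda}_{\al,\beta}\to 0$ in $\dot H^1$ as $\lambda\to-\infty$ so that Proposition \ref{prop:tend0} forces $K_{\al',\beta'}$ to become positive, and then finds $\lambda_0<0$ with $K_{\al',\beta'}(\vp^{\lambda_0}_{\al,\beta})=0$ and $E(\vp^{\lambda_0}_{\al,\beta})<m$, contradicting the minimization. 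You instead freeze $\vp$ and move the parameter: since $(\al,\beta)\mapsto K_{\al,\beta}(\vp)$ is affine and $\Omega$ is convex (your verification that a segment between two points of $\Omega$ cannot pass through the origin is the right one), a sign change between $(\al,\beta)$ and $(\al',\beta')$ yields an interior point $(\al^*,\beta^*)\in\Omega$ with $K_{\al^*,\beta^*}(\vp)=0$, so $\vp$ itself is a competitor and $m=m_{\al^*,\beta^*}\le E(\vp)<m$. Your route is more elementary — no flow, no monotonicity bootstrap, no appeal to Proposition \ref{prop:tend0} — but it leans entirely on Proposition \ref{prop:minima} (constancy of $m_{\al,\beta}$ on $\Omega$), which is legitimate here since that is established beforehand and independently of this statement; the paper's final contradiction uses the same constancy, so you are not assuming more than the paper does. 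What the paper's flow argument buys is robustness: it produces a vanishing-$K$ competitor below the threshold even in settings where one does not already know the thresholds agree across parameters, whereas your argument is the shortest path once Proposition \ref{prop:minima} is in hand. The small points you flag (strictness $K_{\al,\beta}(\vp)>0$ when $\vp\neq 0$, the boundary case $K_{\al',\beta'}(\vp)=0$ in the $\mathcal{K}^-$ direction) are handled correctly; one could also note that the $\mathcal{K}^-$ case follows formally from the $\mathcal{K}^+$ case since $\mathcal{K}^+_{\al,\beta}\cup\mathcal{K}^-_{\al,\beta}=\{E<m\}$ is parameter-independent and the union is disjoint.
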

Based on this property, we can denote $\K^+=\K^+_{\ab} $ and
$\K^-=\K^-_{\ab} $ for simplicity. We will also prove the energy trapping
property which  manifest another important
property of the regions $\K^+$ and $\K^-$.
\begin{prop}\label{prop:u-in-K}
 Let $u:~I\times\R^4\to\C$ be the
solution to \eqref{001} with initial data $u(0,x)=u_0(x)\in
H^1(\R^4)$. Then, we have
\begin{enumerate}
\item If $u_0\in\mathcal{K}^-$, then for each $t\in I$,
$u(t)\in\mathcal{K}^-$  and
\begin{align}\label{uniform:K:negative12re}
\|\nabla u(t)\|_{L_x^2}>\|\nabla W\|_{L_x^2}.
\end{align}
\item If $u_0\in\mathcal{K}^+$, then for each $t\in I$,
$u(t)\in\mathcal{K}^+$ and
\begin{align}\label{uniform:K:positive12re}
\|\nabla u(t)\|_{L_x^2}<\|\nabla W\|_{L_x^2}.
\end{align}
\end{enumerate}

\end{prop}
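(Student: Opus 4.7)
The plan is to exploit Proposition \ref{prop:K-independent} to choose the most convenient $(\alpha,\beta)\in\Omega$, namely $(\alpha,\beta)=(1,-\tfrac{5}{6})$: this lies on the boundary of $\Omega$ (since $5\alpha+6\beta=0$), and because $\alpha+\tfrac{6}{5}\beta=0$ while $\alpha+\beta=\tfrac{1}{6}$, the functional collapses to
\[
K_{1,-5/6}(\phi)=\tfrac{1}{6}\int_{\R^4}\bigl(|\nabla\phi|^2-|\phi|^4\bigr)\,dx,
\]
so the subcritical $|u|^{10/3}$ term drops out of the Pohozaev-type functional. The sharp Sobolev inequality $\|\phi\|_{L^4}^4\le \|\nabla W\|_{L^2}^{-2}\|\nabla\phi\|_{L^2}^4$ (with equality at $W$ by the Euler--Lagrange equation $-\Delta W=W^3$ combined with $\|W\|_{L^4}^4=\|\nabla W\|_{L^2}^2$), together with $m_{\alpha,\beta}=\tfrac14\|\nabla W\|_{L^2}^2$ from Proposition \ref{prop:minima}, will supply the quantitative comparison.

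For the invariance statement I would argue by continuity. Standard $H^1$ local well-posedness makes $t\mapsto u(t)$ continuous into $H^1(\R^4)$, so $t\mapsto K_{1,-5/6}(u(t))$ is continuous on $I$; meanwhile, energy and mass conservation give $E(u(t))\equiv E(u_0)<m_{\alpha,\beta}$ and $\|u(t)\|_{L^2}\equiv\|u_0\|_{L^2}$. Suppose by contradiction that $u_0\in\mathcal{K}^\pm$ and $u(t^*)\notin\mathcal{K}^\pm$ for some $t^*\in I$. Since $E<m_{\alpha,\beta}$ is preserved, the escape must proceed by a sign change of $K_{1,-5/6}$, so the intermediate value theorem produces $t_0$ with $K_{1,-5/6}(u(t_0))=0$. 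Mass conservation and the fact that $0\notin\mathcal{K}^-$ (and that $u_0=0\in\mathcal{K}^+$ yields the trivial solution, for which the conclusion is immediate) guarantee $u(t_0)\not\equiv 0$; then by definition of $m_{\alpha,\beta}$, $E(u(t_0))\ge m_{\alpha,\beta}$, contradicting $E(u(t_0))<m_{\alpha,\beta}$.

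For the gradient inequalities, on $\mathcal{K}^+$ the bound $K_{1,-5/6}(u(t))\ge 0$ gives $\|u(t)\|_{L^4}^4\le\|\nabla u(t)\|_{L^2}^2$, so
\[
E(u(t))\ge E_c(u(t))\ge \tfrac12\|\nabla u(t)\|_{L^2}^2-\tfrac14\|\nabla u(t)\|_{L^2}^2=\tfrac14\|\nabla u(t)\|_{L^2}^2,
\]
which combined with $E(u(t))<\tfrac14\|\nabla W\|_{L^2}^2$ gives \eqref{uniform:K:positive12re}. On $\mathcal{K}^-$ the reverse strict inequality $\|\nabla u(t)\|_{L^2}^2<\|u(t)\|_{L^4}^4$, combined with the sharp Sobolev bound $\|u(t)\|_{L^4}^4\le\|\nabla W\|_{L^2}^{-2}\|\nabla u(t)\|_{L^2}^4$, yields \eqref{uniform:K:negative12re} after dividing by $\|\nabla u(t)\|_{L^2}^2$ (note that $\nabla u(t)\neq 0$ since $K<0$).

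Insofar as there is a hard step, it is organizing the invariance argument so that the barrier $\{K=0\}$ is genuinely impassable; for this the only real subtlety is confirming $u(t_0)\not\equiv 0$ via $L^2$-mass conservation, after which the variational characterization of $m_{\alpha,\beta}$ does all the work. Everything else reduces to algebra given Propositions \ref{prop:minima} and \ref{prop:K-independent} and the sharp Sobolev embedding.
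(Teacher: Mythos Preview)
Your proof is correct and follows essentially the same strategy as the paper: invariance via energy conservation plus a continuity argument (the barrier $\{K=0,\ \varphi\neq 0\}$ forces $E\ge m$), and the gradient bounds via the sharp Sobolev inequality together with $m=\tfrac14\|\nabla W\|_{L^2}^2$. The one genuine difference is your choice of parameters. The paper works throughout with $(\alpha,\beta)=(2,-1)$, for which $K_{2,-1}$ retains the $|u|^{10/3}$ term; it therefore routes the gradient bounds through the auxiliary functional $H_{2,-1}$ and the identity $E=\tfrac14\|\nabla\varphi\|_{L^2}^2+\tfrac1{10}\|\varphi\|_{L^{10/3}}^{10/3}+\tfrac14K_{2,-1}$ (Lemmas \ref{lem:equi11}, \ref{lem:kgeq0}, \ref{uniform bound}). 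Your choice $(1,-\tfrac56)$ kills the subcritical term in $K$ and reduces everything to the pure energy-critical comparison $\|\nabla\varphi\|_{L^2}^2$ versus $\|\varphi\|_{L^4}^4$, which is cleaner for this particular statement. The trade-off is that the paper's route through Lemma \ref{uniform bound} simultaneously yields the quantitative gap $K_{2,-1}\le -\tfrac83(m-E)$ on $\mathcal{K}^-$, which is what drives the virial blow-up argument in Section~3; your argument, while sufficient for Proposition \ref{prop:u-in-K} itself, does not directly produce that uniform negativity.
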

This and the energy conservation law shows that if $u$ is a solution to \eqref{001} with the initial data $u(0,x)\in\K^+$ or $\K^-$, the solution flow $\{u(t),t\in I\}$ will remain in the regions $\K^+$ or $\K^-$, where $I$ is the maximal lifespan of $u$. Hence this proposition brings about many conveniences to our proof.

To prove the blowup results, we use the same method as in \cite{Kenig-Merle-Invention2006,MXZ-1,MXZ-2}. Indeed,  the estimation of
the differentials of the localized virial identity helps us  preclude the global existence of the solution which starts from $\K^-$.

On the other hand, to prove the scattering results, by local
well-posedness theory in Section 2, it suffices to show that the
global scattering size of $u\in \K^+$ is bounded by certain
constant. To this end, we turn to a proof of contradiction. More
precisely, suppose the energy threshold $E_*$ is less than $m$, thus
there exists a sequence of solutions $u_n$ in $\K^+$, with the
property that $  E(u_n) \ra E_*  $  and $\norm{u_n}_{ST(\R)} \ra
\infty   $ as  $n \ra \infty$, where $ST(\R) $ is the scattering
norm we will define later.

By making use of  the linear and nonlinear profile decomposition and
the stability lemma given by \cite{AKIN-SM-2013}, we can obtain a
critical element $u_c(t,x)$. We will also prove a crucial compactness property of
the critical element dynamics, that is, $\{u_c(t),0\leq t<\infty\}$
is  precompact  in $\dot{H}^s(\R^4)$ after module  the translation
symmetry, for all $s\in (0,1]$.

Finally, in the last step, extinction of the critical element $u_c$,
we use the interaction Morawetz estimates to deduce that $u_c$
actually vanishes.  This is a contradiction to the the local
well-poesdness theory(which implies that the solution with small
initial data is global well-posed and scattering). It is worthwhile
to note that, since we consider the scattering problem in
$H^1(\R^4)$, the mass of $u_c$ conserves with time and remains
bounded, which makes this step more easier than the analog step in
\cite{Dodson-2014}. This is the main reason that why we do not use the
longtime Strichartz estimates for the critical element $u_c$.

This paper is organized as follows: In the first part of Section 2, we give the
notations in this paper and recall some basic harmonic analysis
tools and the local-wellposed theory of \eqref{001}.
We prove Proposition \ref{prop:minima}--\ref{prop:u-in-K} by introducing the variation
 method in the second part of Section 2.
Section 3 will prove the finite time blowup  for solutions in
$\K^-$. In Section 4, we will construct the linear and nonlinear
profile decomposition of the solutions sequence to \eqref{001}. In
Section 5, we will finish the proof of the main theorem.


\section{Basic estimates and variational method}

\subsection{Basic Tools}
First we will present some  harmonic analysis tools which will be
used later. We define the Fourier transform on $\mathbb{R}^4$ by
\begin{equation*}
\aligned \widehat{f}(\xi):= \tfrac{1}{4\pi^2}\int_{\mathbb{R}^4}e^{-
ix\cdot \xi}f(x)\,dx.
\endaligned
\end{equation*}
Based on this, for each $s\in \R$, we define the differentiation
operator $ \phi(\nabla)$ by $\widehat{\vp(\nabla) f}(\xi)=\vp(i\xi)
\hat{f}(\xi).$ Hence we can define the homogeneous Sobolev norms by
$\|u\|_{\dot{H}^s(\R^4)}=\||\nabla|^su\|_{L^2(\R^4)}$  and the
inhomogeneous Sobolev norms by
$\|u\|_{H^s(\R^4)}^2=\|u\|_{\dot{H}^s(\R^4)}^2+\|u\|_{L^2(\R^4)}^2$,
for $s\in\R$.

We will also use the following two lemmas, which deal with the
fractional derivatives.
\begin{lem}[Fractional product rule, \cite{Christ-W-JFA-1991}]
Let $s\in (0,1]$ and $f,g\in \mathcal{S}(\R^4)$, then we have
\beq\label{fp} \||\nabla|^s(fg)\|_{L^p(\R^4)}\les \||\nabla|^s
f\|_{L^{q_1}(\R^4)} \|g\|_{L^{r_1}(\R^4)} +\|| f\|_{L^{q_2}(\R^4)}
\|\nabla|^sg\|_{L^{r_2}(\R^4)}, \eeq where
$\frac{1}{p}=\frac{1}{q_1}+\frac{1}{r_1}$,
$\frac{1}{p}=\frac{1}{q_2}+\frac{1}{r_2}$ and $1\leq p\leq
q_1,q_2,r_1,r_2 < \infty.$

\end{lem}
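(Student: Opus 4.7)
The plan is to establish this classical Kato--Ponce/Christ--Weinstein fractional Leibniz inequality via Bony's paraproduct decomposition together with Littlewood--Paley square-function theory. Let $P_k$ denote the Littlewood--Paley projector at dyadic frequency $2^k$ on $\R^4$ and set $S_k=\sum_{j\le k}P_j$. I would begin by writing
\begin{equation*}
fg \;=\; \sum_k S_{k-3}g\cdot P_k f \;+\; \sum_k S_{k-3}f\cdot P_k g \;+\; \sum_{|k-k'|\le 2} P_k f\cdot P_{k'} g \;=:\; \Pi_1+\Pi_2+\Pi_3,
\end{equation*}
where $\Pi_1,\Pi_2$ are the off-diagonal (high-low and low-high) paraproducts and $\Pi_3$ is the high-high remainder, and then bound $|\nabla|^s$ of each piece separately.

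For $\Pi_1$, each summand is Fourier-supported in the annulus $\{|\xi|\sim 2^k\}$, so $|\nabla|^s\Pi_1$ already comes in a Littlewood--Paley-adapted form. Applying the square-function characterization $\|F\|_{L^p}\sim\|(\sum_k|P_k F|^2)^{1/2}\|_{L^p}$ (valid for $1<p<\infty$), the pointwise bound $|S_{k-3}g|\les Mg$ by the Hardy--Littlewood maximal function, and H\"older's inequality, one obtains
\begin{equation*}
\||\nabla|^s\Pi_1\|_{L^p}\les \Big\|Mg\cdot\Big(\sum_k 2^{2ks}|P_k f|^2\Big)^{1/2}\Big\|_{L^p}\les \|Mg\|_{L^{r_1}}\||\nabla|^s f\|_{L^{q_1}},
\end{equation*}
and the $L^{r_1}$-boundedness of $M$ (for $r_1>1$) produces the first term on the right-hand side of \eqref{fp}. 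The paraproduct $\Pi_2$ is handled symmetrically, with the derivative now falling on the high-frequency factor $P_k g$, yielding the second term.

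The main obstacle is the high-high remainder $\Pi_3$: its summands $P_k f\cdot P_{k'} g$ have Fourier support in a ball rather than an annulus, so $|\nabla|^s\Pi_3$ is no longer automatically diagonalized by Littlewood--Paley. I would instead compute $P_j|\nabla|^s\Pi_3$ and observe that only those terms with $\max(k,k')\ge j-O(1)$ survive, producing a geometric factor $2^{(j-k)s}$ that is summable precisely because $s>0$; a Schur-type bound combined with the square function then reduces the estimate to the same structure as for $\Pi_1$, placing the derivative on whichever of $f$ or $g$ is convenient, and yielding either term on the right-hand side of \eqref{fp}. The upper constraint $s\le 1$ is not strictly needed for the inequality itself but is the range actually used in the paper. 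Endpoint issues at $p=1$ or $q_i,r_i=1$, when they arise in the allowed range $1\le p\le q_i,r_i<\infty$, are handled by the usual passage to weak-type or Hardy-space versions of the square function and maximal inequalities.
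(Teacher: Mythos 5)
This lemma is not proved in the paper at all: it is quoted as a known result with a citation to Christ--Weinstein \cite{Christ-W-JFA-1991}, so there is no internal argument to compare yours against. Your sketch is the standard paraproduct proof of the fractional Leibniz rule, and it is essentially sound: the treatment of $\Pi_1,\Pi_2$ via the pointwise bound $|S_{k-3}g|\les Mg$, the square-function characterization, and H\"older is exactly right (note the exponent constraints $\frac1p=\frac1{q_1}+\frac1{r_1}$, $q_1,r_1<\infty$, $p\ge1$ force $q_i,r_i\in(1,\infty)$, so the maximal-function and square-function bounds on those spaces are available), and your observation that the high-high term $\Pi_3$ is where $s>0$ is genuinely used, via the summable factor $2^{(j-k)s}$ after re-localizing with $P_j$, is the correct key point; likewise $s\le1$ is indeed not needed for the product rule (unlike the chain rule).

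The one place you are too casual is the endpoint $p=1$, which the statement allows (e.g.\ $p=1$, $q_1=r_1=2$). There the reverse Littlewood--Paley inequality $\|F\|_{L^1}\les\|(\sum_k|P_kF|^2)^{1/2}\|_{L^1}$ fails in general ($L^1$ versus $H^1$), and it is not enough to wave at ``weak-type or Hardy-space versions,'' since the desired conclusion is a strong $L^1$ bound, not a weak-type or $H^1$ bound; the known treatments of $p\le1$ (Grafakos--Oh, Muscalu--Schlag) instead exploit the compact frequency support of each paraproduct piece through Nikolskii/Bernstein-type inequalities and work with the quasi-norm directly. So either restrict your argument to $1<p<\infty$ (which covers every use in this paper, e.g.\ the nonlinear estimates \eqref{nl-3}--\eqref{nl-4} with $p=3/2$) or supply that endpoint mechanism explicitly; as written, the $p=1$ case is a gap in an otherwise correct and standard argument.
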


\begin{lem}[Fratianal chain rule, \cite{Christ-W-JFA-1991}]\label{frc-chain}
Let $G\in C^1(\mathbb{C})$ and $s\in(0,1]$, then for any Schwartz
function $u$ we have \beq\label{fc} \||\nabla|^s
G(u)\|_{L^p(\R^4)}\les \|G'(u)\|_{L^q(\R^4)} \||\nabla|^s
u\|_{L^r(\R^4)}, \eeq where $\frac{1}{p}=\frac{1}{q}+\frac{1}{r}$
and $1\leq p\leq  q,r <\infty.$
\end{lem}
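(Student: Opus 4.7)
The plan is to prove the estimate via a Littlewood-Paley decomposition of $G(u)$ combined with pointwise bounds coming from the mean value theorem. First I would write $G(u) = \sum_{N\in 2^{\Z}} P_N G(u)$, where $P_N$ is the standard Littlewood-Paley projector onto frequencies $|\xi|\sim N$, and use that for $1<p<\infty$ the quantity $\||\nabla|^s G(u)\|_{L^p(\R^4)}$ is equivalent to the $L^p$ norm of the square function $\bigl(\sum_N N^{2s}|P_N G(u)|^2\bigr)^{1/2}$.

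For each $N$, set $u_{<N}:= P_{<N} u$. The Taylor expansion
\[
G(u)-G(u_{<N}) = \Bigl(\int_0^1 G'\bigl(u_{<N}+\theta (u-u_{<N})\bigr)\, d\theta\Bigr)\cdot (u-u_{<N})
\]
together with Bernstein's inequality produces, modulo a low-frequency remainder arising from $P_N G(u_{<N})$ (which is handled by iterating the decomposition on $u_{<N}$), a pointwise bound of the form
\[
|P_N G(u)(x)| \les M\bigl(G'(u)\bigr)(x)\cdot \bigl|P_{\sim N} u(x)\bigr|,
\]
where $M$ denotes the Hardy--Littlewood maximal function. This is the structural identity that turns the nonlinear chain rule into a linear multiplier question.

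The main obstacle is reassembling these frequency-localized bounds so that the weight $N^s$ is absorbed into $|\nabla|^s u$ rather than into $G'(u)$. For this I would apply the Fefferman--Stein vector-valued maximal inequality to the $G'(u)$ factor, recognize $\bigl(\sum_N N^{2s}|P_{\sim N} u|^2\bigr)^{1/2}$ as pointwise equivalent in $L^r(\R^4)$ to $\||\nabla|^s u\|_{L^r}$ via the Littlewood-Paley square function characterization, and then conclude with H\"older's inequality using $\tfrac{1}{p} = \tfrac{1}{q}+\tfrac{1}{r}$. Since this is exactly the classical fractional chain rule established by Christ and Weinstein and cited in the statement, in practice I would invoke their result directly rather than redo the above argument by hand.
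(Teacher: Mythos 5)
The paper does not prove this lemma at all: it is quoted verbatim from Christ--Weinstein \cite{Christ-W-JFA-1991}, so your closing remark that in practice you would simply invoke their result is exactly what the paper does. Your Littlewood--Paley sketch is the standard route to that classical result (though the displayed pointwise bound compresses what is really a telescoping sum over dyadic scales with decay factors, rather than a single $M(G'(u))\,|P_{\sim N}u|$ bound), so the proposal is fine and consistent with the paper's treatment.
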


Now, we are ready to introduce the Strichartz estimates and give the
local well-posedness of \eqref{001}. First, we say a pair of
exponents $(q, r)$ is  Schr\"odinger $\dot{H}^s$-admissible in
dimension four if
$$\frac{2}{q}=4\left(\frac{1}{2}-\frac{1}{r}\right)-s$$ and $2\leq q,r\leq \infty$. And we denote the dual exponent $q'$ to
 $q\in (1,\infty)$ by $ \frac{1}{q'} +\frac{1}{q}=1$.

\begin{lem}[Strichartz estimates, \cite{GV-CMP-1992,KT-AJM-1998,Strichartz-Duke-1977}] Let $(q,r)$  and $(\widetilde{q},\widetilde{r})$
 be Schr\"odinger $L^2$-admissible pairs in dimension four. If $\vp\in L^2(\R^4)$ and $f\in L_t^{\widetilde{q}'}  L_x^{\widetilde{r}'}(\R\times\R^4)$,
then we have
\begin{align}
  \|e^{it\Delta} \vp(x)\|_{L_t^qL_x^r(\R\times\R^4)}\; \les &\|\vp\|_{L^2(\R^4)}.\label{st1} \\
  \Big\| \int_0^t e^{i(t-\tau) \Delta} f(\tau,x) d\tau \Big\|_{L_t^qL_x^r(\R\times\R^4)}\les &\|f\|_{L_t^{\widetilde{q}'}
   L_x^{\widetilde{r}'}(\R\times\R^4)}.\label{st2}
\end{align}
\end{lem}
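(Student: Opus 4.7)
The plan is to follow the classical $TT^*$ argument combined with the dispersive estimate, and then invoke the Keel--Tao machinery to handle the endpoint case.

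First, I would establish the dispersive estimate for the free Schr\"odinger propagator: writing the Schr\"odinger kernel explicitly,
\begin{equation*}
 e^{it\Delta}\vp(x) = \frac{1}{(4\pi i t)^2}\int_{\R^4} e^{i|x-y|^2/4t}\vp(y)\,dy,
\end{equation*}
one reads off $\|e^{it\Delta}\vp\|_{L^\infty_x}\lesssim |t|^{-2}\|\vp\|_{L^1_x}$ for $t\neq 0$. Interpolating this with the unitarity $\|e^{it\Delta}\vp\|_{L^2_x}=\|\vp\|_{L^2_x}$ yields
\begin{equation*}
 \|e^{it\Delta}\vp\|_{L^r_x}\lesssim |t|^{-2(1/2-1/r)}\|\vp\|_{L^{r'}_x},\quad 2\le r\le\infty.
\end{equation*}

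Next I would apply the $TT^*$ method. Setting $Tu_0=e^{it\Delta}u_0$, we have $TT^*F(t,x)=\int_{\R} e^{i(t-s)\Delta}F(s,x)\,ds$. The homogeneous bound \eqref{st1} is equivalent to
\begin{equation*}
 \bigl\|TT^*F\bigr\|_{L^q_tL^r_x}\lesssim \|F\|_{L^{q'}_tL^{r'}_x}.
\end{equation*}
Combining the dispersive estimate applied to the inner propagator with Minkowski and the Hardy--Littlewood--Sobolev inequality in the time variable handles the non-endpoint case $q>2$, using that $L^2$-admissibility $2/q=4(1/2-1/r)$ makes the singular kernel $|t-s|^{-2(1/2-1/r)}$ exactly of fractional integration type with the correct exponent.

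For the endpoint pair $(q,r)=(2,4)$ in four dimensions, the HLS step fails, so I would invoke the bilinear/dyadic argument of Keel--Tao: decompose time into dyadic annuli $|t-s|\sim 2^k$, estimate the bilinear form $\langle TT^*F,G\rangle$ on each piece by interpolating between the $L^1\to L^\infty$ dispersive bound and the $L^2\to L^2$ bound, and sum using an atomic/Whitney decomposition together with a real interpolation trick. Finally, the inhomogeneous estimate \eqref{st2}, which involves the retarded kernel $\mathbf 1_{s<t}$, follows for $q>\widetilde q'$ from the Christ--Kiselev lemma, and at the double endpoint $q=\widetilde q=2$ again by the Keel--Tao bilinear interpolation.

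The main obstacle is the endpoint $(2,4)$ in $d=4$: here the Hardy--Littlewood--Sobolev approach degenerates because the dispersive decay $|t-s|^{-1}$ sits exactly at the boundary of $L^{2,\infty}$-type convolution estimates, and one genuinely needs the dyadic decomposition and atomic-space argument of Keel--Tao to close the estimate without logarithmic loss.
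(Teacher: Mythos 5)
Your proposal is correct and follows essentially the same route as the sources the paper cites for this lemma (it gives no proof of its own): the dispersive estimate plus the $TT^*$ argument and Hardy--Littlewood--Sobolev for non-endpoint pairs, the Keel--Tao bilinear dyadic interpolation for the endpoint $(2,4)$ in $d=4$, and Christ--Kiselev together with the double-endpoint case for the retarded estimate \eqref{st2}. One cosmetic slip: in four dimensions the interpolated dispersive bound is $\|e^{it\Delta}\vp\|_{L^r_x}\lesssim |t|^{-4(\frac12-\frac1r)}\|\vp\|_{L^{r'}_x}=|t|^{-2(1-\frac2r)}\|\vp\|_{L^{r'}_x}$, not $|t|^{-2(\frac12-\frac1r)}$, which is what your subsequent use of admissibility $\frac2q=4(\frac12-\frac1r)$ and of the decay $|t-s|^{-1}$ at $(2,4)$ in fact presuppose.
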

If $I\subseteq \R$ is a interval, we define some time-spatial
Strichartz spaces by
\begin{align*}
 S(I)=&L^\infty_{t}(I; L^2(\R^4))\;\cap\; L^2_{t}(I; L^4(\R^4)), \\
  W_1(I)=&L^6_t(I; L^6(\R^4)),\;\;    \;\;\;   V_1(I) = L^6_t(I; L^\frac{12}{5}(\R^4)),\\
   W_2(I)=&L^4_t(I; L^4(\R^4)),\;\;     \;\;\;  V_2(I) = L^4_t(I; L^\frac{8}{3}(\R^4)),\\
  ST(I)=&W_1(I)\cap W_2(I), \;\; \;V_0(I)=L_t^{3}(I;L^{3}(\R^4)).
\end{align*}
By the definition of the Schr\"odinger admissible pairs, one can find that $W_1$ is $\dot{H}^{1}$-admissible, $W_2$ is $\dot{H}^\frac{1}{2}$-admissible and $V_0$, $V_1$, $ V_2$ is
$L^2$-admissible. On the other hand, standard arguments show that if a solution $u$ to \eqref{001} is global, with $\|u\|_{ST(\R)}<+\infty$, then it scatters. In view of this, we
define  $ \norm{u}_{ST(I)} $ as the scattering size of $u$ on time
interval $I$ if $u$ is  a solution to Cauchy problem \eqref{001}.

For  the sake of later use, by the H\"older inequality and Lemma \ref{frc-chain}, we give some
nonlinear estimates:
\begin{equation}\label{nl-3}
  \big\| |\nabla|^s (|u|^2 u)\big\|_{L^{3/2}_{t,x} (I\times \R^4) } \leq\big\||\nabla|^s u\big\|_{V_0(I)} \|u\|^{2}_{W_1(I)},\text{\;\;\; for } s=0,\;1
\end{equation}
and
\begin{equation}\label{nl-4}
  \big\| |\nabla|^s (|u|^\frac{4}{3}u)\big\|_{L^{3/2}_{t,x} (I\times \R^4) } \leq\big\||\nabla|^s u\big\|_{V_0(I)}
  \|u\|^{\frac{4}{3}}_{W_2(I)},\text{\;\;\; for } s=0,\;1.
\end{equation}

As a consequence of  the Strichartz estimates and the nonlinear estimates, one
can obtain the local theory of \eqref{001}.
\begin{thm}[Local well-posedness, \cite{AKIN-SM-2013}]
Let $u_0\in H^1(\R^4)$ and $I$ be a time interval with $0\in I$.  We
have:
\begin{itemize}
  \item[(1)] There exists $\delta=\delta(\|u_0\|_{H^1(\R^4)})>0$ such that:
If \beq \norm{\lag\nabla \rag e^{it\Delta} u_0}_{V_2(I)} \leq
\delta,\eeq then there exists a solution $u\in C(I;H^1(\R^4))$ to the
Cauchy problem \eqref{001} with the  following properties:
\begin{align}
u(0)=&u_0 ,\\
 \|\lag \nabla \rag u\|_{S(I)}\leq&
\|u_0\|_{H^1},\\
\|\lag \nabla \rag u\|_{V_2(I)} \leq& 2
\|\lag\nabla \rag e^{it\Delta} u_0\|_{V_2(I)}.
\end{align}

\end{itemize}
Furthermore, assume that $u\in C(I_{max};H^1(\R^4))$ is a solution
to  \eqref{001}, where $I_{max}$ is the maximal lifespan of $u$.
Note that $I_{max}$ must be open by $(1)$.
\begin{itemize}
  \item[(2)] The mass and energy  conservation laws hold true,   $\forall\, t,t_0\in I_{max}$,
\beq  M(u(t))=M(u(t_0)), \eeq \beq E(u(t))=E(u(t_0)).\eeq
  \item[(3)] Let $T_{max}=\sup I_{max}$. If $T_{max}<+\infty$, then
  \beq \|u\|_{ST([T,T_{max}))}=\infty\; \text{ for any } T \in I_{max}. \eeq
A similar result holds when $T_{min}=\inf I_{max}>-\infty$.
  \item[(4)] If $\|u\|_{ST(I_{max})}< \infty$, then $I_{max}=\R$ and there exist $\phi_\pm\in H^1(\R^4)$ such that
\beq \lim\limits_{t\rightarrow \pm\infty}
\|u(t)-e^{it\Delta}\phi_\pm\|_{H^1(\R^4)}=0. \eeq

\end{itemize}
\end{thm}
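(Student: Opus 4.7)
The plan for part (1) is a Picard fixed-point argument on the Duhamel operator
$$\Phi(u)(t) := e^{it\Delta}u_0 + i\int_0^t e^{i(t-\tau)\Delta}\bigl(|u|^2 u - |u|^{4/3}u\bigr)(\tau)\,d\tau$$
in a closed ball of the resolution space $X_I := \{u : \langle\nabla\rangle u \in S(I)\cap V_2(I)\}$ cut out by $\|\langle\nabla\rangle u\|_{S(I)} \le C\|u_0\|_{H^1}$ and $\|\langle\nabla\rangle u\|_{V_2(I)} \le 2\delta$. For $\Phi$ itself I would apply the Strichartz estimate \eqref{st1} for the linear piece (bounded by $\|u_0\|_{H^1}$ in $S(I)$ and by $\delta$ in $V_2(I)$ by hypothesis) and the dual Strichartz estimate \eqref{st2} with the $L^2$-admissible pair $(3,3)$ for the nonlinear piece, reducing matters to estimating $\|\langle\nabla\rangle(|u|^2u - |u|^{4/3}u)\|_{L^{3/2}_{t,x}}$. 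For this I would invoke \eqref{nl-3}--\eqref{nl-4}, the second of which genuinely needs the fractional chain rule in Lemma \ref{frc-chain} since $|u|^{4/3}u$ is only $C^{7/3}$.

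The key exponent bookkeeping is then to show that every norm on the right of \eqref{nl-3}--\eqref{nl-4} can be controlled by combinations of $\|u_0\|_{H^1}$ and the small $V_2$-norm of $\langle\nabla\rangle u$. Concretely, Sobolev in space gives $\|u\|_{W_2(I)} \lesssim \||\nabla|^{1/2}u\|_{V_2(I)} \lesssim \|\langle\nabla\rangle u\|_{V_2(I)}$, while a similar Sobolev/Gagliardo--Nirenberg interpolation expresses $\|u\|_{W_1(I)}$ as a product of a small factor (a fractional power of the $V_2$-norm) and a bounded $\|u_0\|_{H^1}$-controlled factor; meanwhile $\|\langle\nabla\rangle u\|_{V_0(I)}$ falls out of Strichartz applied to the Duhamel formula because $V_0$ is itself $L^2$-admissible. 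The upshot is a self-mapping bound $\|\langle\nabla\rangle\Phi(u)\|_{V_2(I)} \le \delta + C(\|u_0\|_{H^1})\delta^{1+\eta}$ for some $\eta>0$, which closes for $\delta$ chosen small in terms of $\|u_0\|_{H^1}$. A parallel difference estimate, using the fractional product rule \eqref{fp} on $|u|^2u - |v|^2v$ and the fractional chain rule on $|u|^{4/3}u - |v|^{4/3}v$, furnishes the contraction and hence the solution with the three listed bounds.

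Items (2)--(4) then follow by standard machinery. For (2), mass and energy conservation are first verified for smooth initial data by direct differentiation in time and then extended to $H^1$-data by density together with the continuous dependence built into the contraction. For (3), if $\|u\|_{ST([T,T_{\max}))}<\infty$ one partitions $[T,T_{\max})$ into finitely many subintervals on which $\|u\|_{W_1},\|u\|_{W_2}$ are small, iterates the Strichartz bootstrap to conclude $\|\langle\nabla\rangle u\|_{V_2([T,T_{\max}))}<\infty$, and then picks a time just below $T_{\max}$ where the $V_2$-tail of the linear flow of $u(T')$ is below $\delta$; applying (1) at $T'$ produces an extension past $T_{\max}$, contradiction. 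Finally, for (4), the condition $\|u\|_{ST(\R)}<\infty$ implies $\{e^{-it\Delta}u(t)\}$ is Cauchy in $H^1$ as $t\to\pm\infty$, because the difference at times $t_1<t_2$ is controlled via \eqref{nl-3}--\eqref{nl-4} applied on $[t_1,t_2]\times\R^4$ and this tends to zero on tail intervals. The main obstacle throughout is really this bookkeeping: making sure that $V_2$-smallness is the single quantity driving the nonlinear contraction, so that $\delta$ can depend only on $\|u_0\|_{H^1}$ rather than on some prescribed interval length.
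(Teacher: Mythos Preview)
The paper does not actually prove this theorem: it is stated as a citation from \cite{AKIN-SM-2013}, preceded only by the remark that the local theory follows ``as a consequence of the Strichartz estimates and the nonlinear estimates.'' Your Picard iteration sketch is exactly the standard argument behind that citation, and the exponent bookkeeping you outline is correct. In particular, the key interpolations are $\|u\|_{W_2}\lesssim\|\langle\nabla\rangle u\|_{V_2}$ by Sobolev in space, and $\|u\|_{W_1}\lesssim\|\nabla u\|_{V_1}\lesssim\|\langle\nabla\rangle u\|_{V_2}^{2/3}\|\langle\nabla\rangle u\|_{L^\infty_tL^2_x}^{1/3}$, so that both $W_1$ and $W_2$ carry a positive power of the small $V_2$-norm while $\|\langle\nabla\rangle u\|_{V_0}$ is controlled by the $S$-norm via interpolation between $L^\infty_tL^2_x$ and $L^2_tL^4_x$. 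This makes the self-map and contraction close with $\delta=\delta(\|u_0\|_{H^1})$ as claimed.

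One small imprecision: you cannot literally apply the fractional chain rule to the \emph{difference} $|u|^{4/3}u-|v|^{4/3}v$. The standard workaround is either to contract in a norm without the derivative (e.g.\ $V_2$ alone) to obtain uniqueness while the self-map bound already gives existence with the stated estimates, or to write the difference via the fundamental theorem of calculus and apply the product rule \eqref{fp} together with H\"older-continuity of $z\mapsto|z|^{4/3}$. Either fix is routine and does not affect the outcome.
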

Similar to \cite[Proposition 5.6]{AKIN-SM-2013}, we have the Perturbation theory.

\begin{prop}[Perturbation theory,\cite{AKIN-SM-2013}]\label{prop:stability}
 Let $I$ be an interval, $u\in C(I;H^1(\R^4))$ be a solution to \eqref{001}
  and $\tu$ be a function in $ C(I;H^1(\R^4))$.
  Let $A>0$ and $t_1\in I$ such that
\beq \label{lpt-1} \norm{u}_{L^\infty(I;H^1)}  +
\norm{u(t_1)-\tu(t_1)}_{H^1}+\norm{\tu}_{ST(I)} \leq A.\eeq Then
there exists $\delta>0$ depending on $A$ such that if
 \begin{align}\label{lpt-2}
\norm{\Japnb \left((i\pa_t+\Delta)\tu+|\tu|^2\tu-|\tu|^\frac{4}{3}\tu    \right)}_{L^\frac32(I\times\R^4)}\leq& \delta,\\\label{lpt-3}
 \norm{\Japnb e^{i(t-t_1)\Delta} [u(t_1)-\tu(t_1)]}_{V_2(I)}\leq& \delta.
 \end{align}
Then we have $\norm{\Japnb u}_{S(I)}<\infty$.
\end{prop}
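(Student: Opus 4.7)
The plan is to write $w=u-\tu$, derive the equation for $w$, and run a bootstrap on a partition of $I$ into subintervals on which $\tu$ has small scattering norm. Subtracting the two equations,
\[
(i\pa_t+\Delta)w \;=\; \mathcal N(\tu+w)-\mathcal N(\tu)-e,
\]
where $\mathcal N(v)=-|v|^2 v+|v|^{4/3}v$ and $e=(i\pa_t+\Delta)\tu+|\tu|^2\tu-|\tu|^{4/3}\tu$ is the defect, controlled by $\|\Japnb e\|_{L^{3/2}_{t,x}(I\times\R^4)}\leq\delta$ via \eqref{lpt-2}. Since $\|\tu\|_{ST(I)}\leq A$, I would first partition $I$ as $I=\bigcup_{j=1}^N I_j$ with $t_1$ at the common endpoint of two consecutive intervals and with $\|\tu\|_{ST(I_j)}\leq\eta$ on each $I_j$, where $\eta>0$ is a small absolute constant and $N=N(A,\eta)$.

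On each $I_j=[\tau_j,\tau_{j+1}]$, Duhamel's formula together with the Strichartz estimates \eqref{st1}--\eqref{st2} and the nonlinear estimates \eqref{nl-3}--\eqref{nl-4} yields, for a suitable combined Strichartz quantity $Y_j$ involving $\Japnb w$ in $S(I_j)$-, $V_0(I_j)$- and $V_2(I_j)$-norms and $w$ itself in $ST(I_j)$, a bootstrap inequality of the schematic form
\[
Y_j \;\lesssim\; \|\Japnb w(\tau_j)\|_{L^2}+\bigl(\|\tu\|_{ST(I_j)}^2+\|\tu\|_{ST(I_j)}^{4/3}+Y_j^2+Y_j^{4/3}\bigr)Y_j+\delta.
\]
Choosing $\eta$ small absorbs the $\tu$-contributions into the left-hand side, and a standard continuity argument starting from the initial bound $\|\Japnb w(t_1)\|_{L^2}\leq\delta$ (from \eqref{lpt-3}) closes the bootstrap and gives $Y_j\leq C_0(\|\Japnb w(\tau_j)\|_{L^2}+\delta)$; in particular,
\[
\|\Japnb w(\tau_{j+1})\|_{L^2}\;\leq\;C_0\bigl(\|\Japnb w(\tau_j)\|_{L^2}+\delta\bigr).
\]
Iterating forward and backward from $t_1$ through the $N$ subintervals and then choosing $\delta=\delta(A)$ so small that $(2C_0)^N\delta$ stays below the bootstrap threshold keeps the smallness hypothesis valid at every step, yielding $\|\Japnb w\|_{S(I)}<\infty$ and hence $\|\Japnb u\|_{S(I)}\leq\|\Japnb\tu\|_{S(I)}+\|\Japnb w\|_{S(I)}<\infty$.

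The delicate point is bounding the $\dH^1$-difference of the subcritical nonlinearity $|u|^{4/3}u-|\tu|^{4/3}\tu$: since the map $z\mapsto|z|^{4/3}z$ is only $C^{1,1/3}$, its derivative is not Lipschitz, so one cannot just commute a gradient through the difference. I would instead write
\[
|u|^{4/3}u-|\tu|^{4/3}\tu\;=\;\int_0^1\partial_s\bigl[|\tu+sw|^{4/3}(\tu+sw)\bigr]\,ds
\]
and then apply the fractional product rule \eqref{fp} and chain rule \eqref{fc} so that a full gradient never lands on the rough $|\cdot|^{1/3}$-type factor; the cubic term is handled analogously and is easier. The scaling of both nonlinearities is exactly such that the outputs sit in $L^{3/2}_{t,x}$, dual to $V_0$, and the factors on the right involve only the critical Strichartz norms $W_1,W_2,V_0,V_2$, in which smallness is guaranteed either by the partition or by \eqref{lpt-3}, so no loss involving the uncontrolled $L^\infty_t H^1_x$ norm of $u$ enters the estimates and the iteration closes.
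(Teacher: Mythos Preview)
The paper does not give its own proof of this proposition; it is stated with a reference to \cite[Proposition~5.6]{AKIN-SM-2013}. Your outline follows the standard long-time perturbation scheme and is correct in spirit, but there is a genuine gap in how you initialize and propagate the iteration.

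You assert that \eqref{lpt-3} yields $\|\Japnb w(t_1)\|_{L^2}\leq\delta$ and then iterate the $H^1$-norm of $w$ across the subintervals via $\|\Japnb w(\tau_{j+1})\|_{L^2}\leq C_0(\|\Japnb w(\tau_j)\|_{L^2}+\delta)$. Hypothesis \eqref{lpt-3} does not say this: it only bounds $\|\Japnb e^{i(t-t_1)\Delta}w(t_1)\|_{V_2(I)}$ by $\delta$, whereas \eqref{lpt-1} merely gives $\|w(t_1)\|_{H^1}\leq A$, which need not be small. With your recursion the very first input is of size $A$, not $\delta$, and the smallness needed to close the bootstrap on the next subinterval is lost immediately. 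The standard fix is to iterate a \emph{dispersive} quantity rather than the energy norm: on each $I_j$ one controls $\|\Japnb w\|_{V_2(I_j)}$ (and hence $\|w\|_{ST(I_j)}$ by Sobolev and interpolation) in terms of $\|\Japnb e^{i(t-\tau_j)\Delta}w(\tau_j)\|_{V_2(I_j)}+\delta+(\text{small})\cdot Y_j$, and one passes from $I_j$ to $I_{j+1}$ by expressing $e^{i(t-\tau_{j+1})\Delta}w(\tau_{j+1})$ through Duhamel from $\tau_j$, so that only the $V_2$-smallness is propagated. The full bound $\|\Japnb u\|_{S(I)}<\infty$ is then recovered at the end, using the $L^\infty_tH^1_x$ control from \eqref{lpt-1} together with the accumulated $V_2$-bounds. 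With this correction your argument closes.
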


\subsection{Variational methods}

First we recall the energy of the solution $u$ to \eqref{001} \beq
E(u)(t)=\frac{1}{2}\int |\nabla u(t,x)|^2 dx -\frac{1}{4}\int
|u(t,x)|^4 dx +\frac{3}{10}\int |u(t,x)|^\frac{10}{3} dx, \eeq
and the modified energy
\beq E_c(u)(t)=\frac{1}{2}\int |\nabla u(t,x)|^2
dx -\frac{1}{4}\int |u(t,x)|^4 dx .\eeq

Recall in the introduction, for any functional $F$ of $H^1$, we define its variation differential by
$$\mathcal{L}_{\al,\beta} F(\phi)=\left. \frac{d}{d\lambda}    \right\arrowvert_{\lambda=0} F( \phi^\lambda_{\al,\beta})=F\Big([(\al-\beta x\cdot\nabla) \phi](x)\Big),$$
where $\phi_{\al,\beta}^\lambda(x) =e^{\al\lambda}\phi(e^{-\beta\lambda}x)$, if $(\ab,\lambda)\in \R^3$.
Thus we have
$$K_{\al,\beta}(\phi):= \mathcal{L}_{\al,\beta} E(\phi)= (\al+\beta)\int \left[|\nabla\phi(x)|^2 - |\phi(x)|^4 \right]dx
+\Big(\al+\frac{6}{5}\beta\Big)\int |\phi(x)|^\frac{10}{3} dx .$$

By the definition of the region of $(\ab)$, \beq\label{Omega}
\Omega=\set{(\al,\beta):\al\geq0,\;\;5\al+6\beta\geq0,\;(\al,\beta)\neq(0,0)},
\eeq it is easy to check that $\al+\beta>0$  if $(\al,\beta)\in \Omega$.
\begin{prop}\label{prop:tend0}
For any $(\al,\beta)\in\Omega $, if $\set{\vp_n}_{n\geq1}$ is a sequence in
$H^1(\R^4)$ with
$$\lim\limits_{ n\rightarrow \infty} \norm{\vp_n}_{\dot{H}^1}=0,$$
then, we have\beq\label{positive K} K_{\al,\beta}(\vp_n)> 0 \eeq for
sufficient large $n$.
\end{prop}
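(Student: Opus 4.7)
\textbf{Proof proposal for Proposition \ref{prop:tend0}.}
The plan is to exploit the non-negativity of the coefficients in the definition of $\Kab$ when $(\ab)\in\Omega$, together with the Sobolev embedding $\dH^1(\R^4)\hookrightarrow L^4(\R^4)$.

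First I would record the sign conditions on the coefficients. From $\al\geq 0$ and $5\al+6\beta\geq 0$ one has $\beta\geq -\tfrac{5\al}{6}$, hence
\[
\al+\beta \;\geq\; \tfrac{\al}{6} \;\geq\; 0, \qquad \al+\tfrac{6}{5}\beta \;\geq\; 0.
\]
The hypothesis $(\ab)\neq(0,0)$ upgrades the first to a strict inequality: if $\al=0$ then $\beta>0$ (since $\beta\geq 0$ from $5\al+6\beta\geq 0$), while if $\al>0$ then $\al+\beta\geq\al/6>0$. In particular $\al+\beta>0$ on $\Omega$.

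Next I would bound the $L^4$ term by the Sobolev embedding $\dH^1(\R^4)\hookrightarrow L^4(\R^4)$, which gives a constant $C>0$ with
\[
\int_{\R^4} |\vpn|^4\,dx \;\les\; \|\vpn\|_{\dH^1}^{4}.
\]
Inserting this into the definition of $\Kab$ and discarding the manifestly non-negative $L^{10/3}$ term, I obtain
\[
\Kab(\vpn) \;\geq\; (\al+\beta)\bigl(\|\vpn\|_{\dH^1}^2 - C\|\vpn\|_{\dH^1}^4\bigr) \;=\; (\al+\beta)\|\vpn\|_{\dH^1}^2\bigl(1-C\|\vpn\|_{\dH^1}^2\bigr).
\]
Since $\|\vpn\|_{\dH^1}\to 0$, for $n$ large enough the factor $(1-C\|\vpn\|_{\dH^1}^2)$ is positive, and $\|\vpn\|_{\dH^1}>0$ (otherwise $\Kab(\vpn)=0$ is trivially zero, but once $\vpn$ is nonzero with small $\dH^1$ norm the bound is strict). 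Combined with $\al+\beta>0$ this yields $\Kab(\vpn)>0$ for all sufficiently large $n$.

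There is no serious obstacle. The only mild subtlety is that a decaying $\dH^1(\R^4)$-norm does not by itself control the $L^{10/3}$ norm (in four dimensions $L^{10/3}$ sits at the level of $\dH^{4/5}$, not $\dH^1$), so one cannot treat the $|\vpn|^{10/3}$ term as a lower-order perturbation. However, because its coefficient $\al+\tfrac{6}{5}\beta$ is non-negative throughout $\Omega$, this potentially uncontrolled term only helps the desired inequality and may simply be dropped, which is what makes the argument clean.
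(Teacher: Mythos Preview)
Your proof is correct and follows essentially the same approach as the paper: use the Sobolev embedding $\dH^1(\R^4)\hookrightarrow L^4(\R^4)$ to see that $\|\vpn\|_{L^4}^4=o(\|\vpn\|_{\dH^1}^2)$, drop the $L^{10/3}$ term using $\al+\tfrac{6}{5}\beta\geq 0$, and conclude from $\al+\beta>0$. Your added remark that the $L^{10/3}$ norm is not itself controlled by $\|\vpn\|_{\dH^1}$ is a nice observation clarifying why the sign condition on its coefficient is what makes the argument work.
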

\begin{proof}
From the Sobolev inequalities
$$ \|\vp_n\|_{L^4(\R^4)}\les \|\vp_n\|_{\dot{H}^1(\R^4)},$$
we have
$\|\vp_n\|^4_{L^4(\R^4)}=o(\|\vp_n\|^2_{\dot{H}^1(\R^4)})$ as $n\ra \infty$, which together with \eqref{Omega} implies \eqref{positive K}.
\end{proof}
We define $\overline{\mu}$  by
$$\overline{\mu}=\max\Big\{2(\al+\beta),\frac{10}{3}\al+4\beta\Big\}
=\begin{cases}2(\al+\beta)\quad \text{if}\quad 2\al\leq-3\beta,\\
\frac{10}{3}\al+4\beta\quad \text{if}\quad 2\al\geq-3\beta.
\end{cases} $$
By a direct calculation, we have
\begin{lem}\label{lem:dircp}
\beq \big(\overline{\mu}-\mathcal{L}_{\al,\beta}\big)E(\vp)=\begin{cases} \frac{\al+\beta}2\|\varphi\|_{L^4}^4-\frac{2\al+3\beta}{5}\|\vp\|_{L^\frac{10}{3}}^\frac{10}{3}\quad \text{if}\quad 2\al\leq-3\beta,\\
\Big(\frac{2}{3}\al+\beta\Big)\Norm{\vp}^2+\frac{1}{6}\al
      \norm{\vp}^4_{L^4}\quad \text{if}\quad 2\al\geq-3\beta.
\end{cases}
\eeq
 and
\begin{align}\nonumber
&\mathcal{L}_{\al,\beta}(\overline{\mu}-\mathcal{L}_{\al,\beta})E(\vp)\\
=&\begin{cases}
2(\al+\beta)^2\|\vp\|_{L^4}^4-\frac2{15}(2\al+3\beta)(5\al+6\beta)\|\vp\|_{L^\frac{10}{3}}^\frac{10}{3}\quad \text{if}\quad 2\al\leq-3\beta,\\
\Big(\frac{2}{3}\al+\beta\Big)(2\al+2\beta)\Norm{\vp}^2+\frac{2}{3}\al(\al+\beta)
\norm{\vp}^4_{L^4}\quad \text{if}\quad 2\al\geq-3\beta.
\end{cases}
\end{align}

\end{lem}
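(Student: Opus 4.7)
The statement is an algebraic identity obtained by tracking the scaling weights of each summand in $E(\varphi)$ under the dilation $\varphi\mapsto\varphi^\lambda_{\alpha,\beta}$, so the plan is a direct computation organized around the action of $\mathcal{L}_{\alpha,\beta}$ on the three homogeneous integrals that build $E$.

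First I would record the scaling identities. A change of variables gives
\begin{align*}
\Norm{\vp^\lambda_{\alpha,\beta}}^2 &= e^{(2\alpha+2\beta)\lambda}\Norm{\vp}^2,\\
\|\vp^\lambda_{\alpha,\beta}\|_{L^4}^4 &= e^{(4\alpha+4\beta)\lambda}\|\vp\|_{L^4}^4,\\
\|\vp^\lambda_{\alpha,\beta}\|_{L^{10/3}}^{10/3} &= e^{(\frac{10}{3}\alpha+4\beta)\lambda}\|\vp\|_{L^{10/3}}^{10/3}.
\end{align*}
Differentiating at $\lambda=0$ shows that $\mathcal{L}_{\alpha,\beta}$ multiplies each of these three quantities by its exponent, which both recovers the formula for $K_{\alpha,\beta}=\mathcal{L}_{\alpha,\beta}E$ displayed in the introduction and reduces the lemma to elementary coefficient manipulations.

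Next I would split according to the definition of $\overline{\mu}$. For $2\alpha\le -3\beta$ we have $\overline{\mu}=2(\alpha+\beta)$, which is precisely the scaling weight of $\Norm{\vp}^2$; hence the $\Norm{\vp}^2$ term cancels in $(\overline{\mu}-\mathcal{L}_{\alpha,\beta})E(\vp)$, and the remaining two coefficients collapse to $\frac{\alpha+\beta}{2}$ and $-\frac{2\alpha+3\beta}{5}$ after computing $2(\alpha+\beta)\cdot(-\tfrac14)-(-(\alpha+\beta))$ and $2(\alpha+\beta)\cdot\tfrac{3}{10}-(\alpha+\tfrac{6}{5}\beta)$. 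For $2\alpha\ge -3\beta$ one has $\overline{\mu}=\frac{10}{3}\alpha+4\beta$, matching the weight of $\|\vp\|_{L^{10/3}}^{10/3}$, so now that term cancels and the residual coefficients reduce to $\frac{2}{3}\alpha+\beta$ on $\Norm{\vp}^2$ and $\frac{1}{6}\alpha$ on $\|\vp\|_{L^4}^4$. This yields the first identity.

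For the second identity I would simply apply $\mathcal{L}_{\alpha,\beta}$ once more to the piecewise output of the first identity, again using the recipe that $\mathcal{L}_{\alpha,\beta}$ acts on $\Norm{\vp}^2,\|\vp\|_{L^4}^4,\|\vp\|_{L^{10/3}}^{10/3}$ by multiplication by $2\alpha+2\beta$, $4\alpha+4\beta$, $\tfrac{10}{3}\alpha+4\beta=\tfrac{2}{3}(5\alpha+6\beta)$, respectively. In the region $2\alpha\le-3\beta$ this produces $2(\alpha+\beta)^2\|\vp\|_{L^4}^4-\tfrac{2}{15}(2\alpha+3\beta)(5\alpha+6\beta)\|\vp\|_{L^{10/3}}^{10/3}$; in the region $2\alpha\ge-3\beta$ it produces $(\tfrac{2}{3}\alpha+\beta)(2\alpha+2\beta)\Norm{\vp}^2+\tfrac{2}{3}\alpha(\alpha+\beta)\|\vp\|_{L^4}^4$, exactly as stated. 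No analytic estimate or compactness argument is needed; the only subtlety is arithmetic bookkeeping, so the main (mild) obstacle is just keeping the case split and the factor $\tfrac{2}{3}(5\alpha+6\beta)$ straight when simplifying the $L^{10/3}$ coefficient in the first case.
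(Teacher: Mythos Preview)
Your proposal is correct and follows exactly the same approach as the paper, which simply states ``by a direct calculation'' and presents the result. Your write-up in fact supplies the coefficient bookkeeping that the paper omits, and all the scaling weights and cancellations you record are accurate.
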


This lemma  impies that $H_{\al,\beta}(\vp)>0 $ and
$\mathcal{L}_{\al,\beta}H_{\al,\beta}(\vp)>0 $ for $\vp\in
H^1(\R^4)\setminus\{0\}$, where  $H_{\ab}$ is given by
\beq\label{H}
\begin{split}
H_{\al,\beta}(\vp)=&\left(1-\frac{\mathcal{L}_{\al,\beta}}{\overline{\mu}}\right)E(\vp)\\
                  =& \begin{cases}
                  \frac14\|\vp\|_{L^4}^4-\frac{2\al+3\beta}{10(\al+\beta)}
                  \|\vp\|_{L^\frac{10}{3}}^\frac{10}{3}
                  \quad \text{if}\quad 2\al\leq-3\beta,\\
                  \frac{2\al+3\beta}{10\al+12\beta}\Norm{\vp}^2+\frac{\al}{20\al+24\beta}\|\varphi\|_{L^4}^4\quad \text{if}\quad 2\al\geq-3\beta.
                  \end{cases}
\end{split}
\eeq

 As in the introduction, we define
$$m_{\al,\beta}=\inf \left\{E(\vp):\vp\in H^1(\R^4)\backslash \{0\},
K_{\al,\beta}(\vp)=0 \right\}.  $$

\begin{lem}\label{lem:equi11} For $(\al,\beta)\in\Omega$, we have
\beq\label{depend m}
\begin{split}
m_{\al,\beta}=&\inf\set{H_{\al,\beta}(\vp): \vp\in H^1(\R^4)\backslash \{0\},\;K_{\al,\beta}(\vp)\leq 0} \\
             =& \inf\set{H_{\al,\beta}(\vp): \vp\in H^1(\R^4)\backslash \{0\},\;K_{\al,\beta}(\vp)< 0}.
\end{split}
\eeq
\end{lem}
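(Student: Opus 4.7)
The plan is to denote $a=\mab$, $b=\inf\{\Hab(\vp):\vp\in H^1(\R^4)\setminus\{0\},\;\Kab(\vp)\leq 0\}$, and $c=\inf\{\Hab(\vp):\vp\in H^1(\R^4)\setminus\{0\},\;\Kab(\vp)<0\}$, and to establish the chain $b\leq a$, $b\leq c$, $a\leq b$, $c\leq a$. The first two inequalities are immediate: the set inclusion $\{\Kab<0\}\subset\{\Kab\leq 0\}$ gives $b\leq c$, while the identity $\Hab(\vp)=E(\vp)-\Kab(\vp)/\overline{\mu}$ (read off from the definition of $\Hab$) forces $\Hab=E$ on $\{\Kab=0\}$, giving $b\leq a$. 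The content of the lemma therefore reduces to the two nontrivial bounds $a\leq b$ and $c\leq a$.

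For $a\leq b$, I would fix $\vp\in H^1(\R^4)\setminus\{0\}$ with $\Kab(\vp)\leq 0$ and show $\Hab(\vp)\geq\mab$. The case $\Kab(\vp)=0$ is trivial since then $\Hab(\vp)=E(\vp)\geq\mab$. If $\Kab(\vp)<0$, I would study the scaled quantity $j(\lambda)=E(\vp^\lambda_{\al,\beta})$; the semigroup property of the flow gives $j'(\lambda)=\Kab(\vp^\lambda_{\al,\beta})$. Since $\al+\beta>0$ on $\Omega$, $\Norm{\vp^\lambda_{\al,\beta}}\to 0$ as $\lambda\to-\infty$, so Proposition \ref{prop:tend0} yields $\Kab(\vp^\lambda_{\al,\beta})>0$ for sufficiently negative $\lambda$. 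The intermediate value theorem applied to the continuous function $j'$ then produces some $\lambda_0<0$ with $\Kab(\vp^{\lambda_0}_{\al,\beta})=0$, so $\mab\leq E(\vp^{\lambda_0}_{\al,\beta})=\Hab(\vp^{\lambda_0}_{\al,\beta})$. The remark following Lemma \ref{lem:dircp} gives $\Lab\Hab(\psi)>0$ for all $\psi\neq 0$, hence $\lambda\mapsto\Hab(\vp^\lambda_{\al,\beta})$ is strictly increasing; applied at $\lambda_0<0$ this delivers $\Hab(\vp^{\lambda_0}_{\al,\beta})<\Hab(\vp)$, which closes the inequality $\mab<\Hab(\vp)$.

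For $c\leq a$, I would start from $\vp$ with $\Kab(\vp)=0$ and perturb along the same scaling flow. Here $j'(0)=\Kab(\vp)=0$, while $j''(0)=\Lab\Lab E(\vp)<\overline{\mu}\,\Kab(\vp)=0$, the strict inequality following from the second formula in Lemma \ref{lem:dircp} (which gives $\Lab(\overline{\mu}-\Lab)E(\vp)>0$) together with $\Kab(\vp)=0$. Consequently $j'(\lambda)<0$ for all sufficiently small $\lambda>0$, i.e.\ $\vp^\lambda_{\al,\beta}\in\{\Kab<0\}$ for these $\lambda$; by continuity of $\lambda\mapsto\Hab(\vp^\lambda_{\al,\beta})$ we obtain $\Hab(\vp^\lambda_{\al,\beta})\to\Hab(\vp)=E(\vp)$ as $\lambda\to 0^+$. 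Hence $c\leq E(\vp)$, and taking the infimum over admissible $\vp$ for $\mab$ yields $c\leq a$.

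Combining these: $b\leq a$ and $a\leq b$ give $a=b$, then $b\leq c$ and $c\leq a=b$ give $b=c$, finishing the proof. The main obstacle is the $c\leq a$ direction, which requires the strict concavity of the scaling profile $j(\lambda)$ at a point where $\Kab=0$; this delicate sign information is precisely what the second identity in Lemma \ref{lem:dircp} is designed to supply, and without it one cannot perturb $\{\Kab=0\}$ into $\{\Kab<0\}$ in a controlled way.
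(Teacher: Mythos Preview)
Your proof is correct and follows essentially the same approach as the paper. Both arguments use the scaling flow $\vp^\lambda_{\al,\beta}$ together with Proposition~\ref{prop:tend0} and the positivity $\Lab\Hab>0$ from Lemma~\ref{lem:dircp}; the only cosmetic differences are that you organize the four inequalities as $a,b,c$ and phrase the perturbation step via $j''(0)<0$ rather than $\Lab\Kab<0$ (these are the same quantity), and you prove $c\leq a$ directly rather than the paper's $m''_{\al,\beta}\leq m'_{\al,\beta}$.
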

\begin{proof}
Let $m'_{\al,\beta}=\inf\set{H_{\al,\beta}(\vp): \vp\in H^1(\R^4)\backslash \{0\},\;K_{\al,\beta}(\vp)\leq 0}\;\;$ and
$$ m''_{\al,\beta}=\inf\set{H_{\al,\beta}(\vp): \vp\in H^1(\R^4)\backslash \{0\},\;K_{\al,\beta}(\vp)< 0}.$$
 If $K_{\al,\beta}(\vp)=0$, then we have $H_{\al,\beta}(\vp)=E(\vp)$.
Hence we have \beq\label{depend m 1} m_{\al,\beta}\geq \mab'. \eeq
On the other hand, if $\Kab(\vp)<0$, from Proposition
\ref{prop:tend0}, there exists $\lambda_0<0$ such that
$$\Kab(\vp^{\lambda_0}_{\al,\beta})=0.$$ By Lemma \ref{lem:dircp}, we have
$\mathcal{L_{\al,\beta}}\Hab(\phi)>0 \text{ for any } \phi\in H^1,$
which implies
$$\Hab(\vp^{\lambda_0}_{\al,\beta}) <\Hab(\vp).$$ This implies
$\mab\leq\mab'$ , which together with \eqref{depend m 1} implies
$\mab=\mab'$. For the second equality in \eqref{depend m}, one can
easily find that \beq\label{depend m 2} \mab'\leq\mab''. \eeq For
any $\vp\in H^1(\R^4)\backslash \{0\}$ such that $\Kab(\vp)\leq 0$,
from Lemma \ref{lem:dircp}, we have
$$\Lab \Kab(\vp)=\overline{\mu}\Kab(\vp)-\Hab(\vp)<0.$$ This implies for $\lambda>0$, $\Kab(\vp^\lambda_{\al,\beta})<0.$
And by definition of $\Hab$, we have as $\lambda\rightarrow 0,$
$$\Hab(\vp^\lambda_{\al,\beta})\rightarrow \Hab(\vp),  $$
which implies $\mab'\geq\mab''$.

\end{proof}

Next we will use the ($\dot{H}^1$-invariant) scaling argument to
remove the $L^\frac{10}{3}$-term (the lower regularity quantity than
$\dot{H}^1$) in $\Kab$, that is, to replace the constrained
condition $\Kab(\vp)<0$ by $\Kcab(\vp)<0$, where $$\Kcab(\vp)=
\mathcal{L}_{\al,\beta} E_c(\vp)= (\al+\beta)\int |\nabla\vp(x)|^2
dx -(\al+\beta)\int |\vp(x)|^4 dx.$$
And let
\begin{equation*}
\Hcab(\vp)=\Big(1-\frac{\Lab}{\bar{\mu}}\Big)E_c(\vp)= \begin{cases}
\frac14\|\vp\|_{L^4}^4
                  \quad \text{if}\quad 2\al\leq-3\beta,\\
                  \frac{2\al+3\beta}{10\al+12\beta}\Norm{\vp}^2+\frac{\al}{20\al+24\beta}\|\varphi\|_{L^4}^4\quad \text{if}\quad 2\al\geq-3\beta.\end{cases}
\end{equation*}

\begin{lem}\label{lem:equivl}
For $(\al,\beta)\in\Omega$, we have \beq\label{m=mc}
\begin{split}
m_{\al,\beta}=&\inf\set{H_{\al,\beta}(\vp): \vp\in H^1(\R^4)\backslash \{0\},\;K^c_{\al,\beta}(\vp)\leq 0} \\
             =& \inf\set{H_{\al,\beta}(\vp): \vp\in H^1(\R^4)\backslash \{0\},\;K^c_{\al,\beta}(\vp)<
             0}\\
             =& \inf\set{\Hcab(\vp): \vp\in H^1(\R^4)\backslash \{0\},\;K^c_{\al,\beta}(\vp)< 0}\\
             =& \inf\set{\Hcab(\vp): \vp\in H^1(\R^4)\backslash \{0\},\;K^c_{\al,\beta}(\vp)\leq 0}\\
             =&\inf\set{H_{\al,\beta}^c(\vp): \vp\in H^1(\R^4)\backslash
             \{0\},\;K^c_{\al,\beta}(\vp)=
             0}.
\end{split}
\eeq
\end{lem}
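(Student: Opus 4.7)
The plan is to take Lemma~\ref{lem:equi11} as the anchor and chain all five infima in \eqref{m=mc} back to $m_{\al,\beta}=\inf\{H_{\al,\beta}(\vp):K_{\al,\beta}(\vp)\le 0\}=\inf\{H_{\al,\beta}(\vp):K_{\al,\beta}(\vp)<0\}$. Label the five infima on the right of \eqref{m=mc} by $m^{(1)},\dots,m^{(5)}$ from top to bottom.

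The easy half rests on two pointwise comparisons. Since $K^c_{\al,\beta}(\vp)=K_{\al,\beta}(\vp)-\bigl(\al+\tfrac{6}{5}\beta\bigr)\|\vp\|_{L^{10/3}}^{10/3}$ with $\al+\tfrac{6}{5}\beta\ge 0$ on $\Omega$, we have $K^c_{\al,\beta}\le K_{\al,\beta}$ pointwise, so $\{K_{\al,\beta}<0\}\subset\{K^c_{\al,\beta}<0\}$ and similarly with $\le$. A direct comparison of \eqref{H} with the explicit formula for $H^c_{\al,\beta}$ gives $H_{\al,\beta}\ge H^c_{\al,\beta}$ pointwise: equality in the regime $2\al\ge -3\beta$, and difference $-\tfrac{2\al+3\beta}{10(\al+\beta)}\|\vp\|_{L^{10/3}}^{10/3}\ge 0$ in the other regime. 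Together with Lemma~\ref{lem:equi11} these yield the chains $m^{(4)}\le m^{(1)}\le m_{\al,\beta}$, $m^{(4)}\le m^{(3)}\le m^{(2)}\le m_{\al,\beta}$, and $m^{(4)}\le m^{(5)}$.

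The core of the proof is the reverse inequality $m^{(3)}\ge m_{\al,\beta}$, which I would prove via the $\dot H^1$-invariant dilation $\vp^\lambda(x):=e^{\lambda}\vp(e^{\lambda}x)$. Under this rescaling $\|\nabla\vp^\lambda\|_{L^2}^2$ and $\|\vp^\lambda\|_{L^4}^4$ are preserved while $\|\vp^\lambda\|_{L^{10/3}}^{10/3}=e^{-2\lambda/3}\|\vp\|_{L^{10/3}}^{10/3}\to 0$ as $\lambda\to+\infty$. For any $\vp$ with $K^c_{\al,\beta}(\vp)<0$ we then have $K^c_{\al,\beta}(\vp^\lambda)=K^c_{\al,\beta}(\vp)<0$, and $K_{\al,\beta}(\vp^\lambda)\to K^c_{\al,\beta}(\vp)<0$, so $K_{\al,\beta}(\vp^\lambda)<0$ for all sufficiently large $\lambda$; Lemma~\ref{lem:equi11} supplies $H_{\al,\beta}(\vp^\lambda)\ge m_{\al,\beta}$. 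Inspection of \eqref{H} shows $H_{\al,\beta}(\vp^\lambda)\to H^c_{\al,\beta}(\vp)$ as $\lambda\to+\infty$ (trivially when $2\al\ge-3\beta$, where $H_{\al,\beta}\equiv H^c_{\al,\beta}$; by vanishing of the single $\lambda$-dependent term in the other regime), and passing to the limit delivers $H^c_{\al,\beta}(\vp)\ge m_{\al,\beta}$. Combined with the easy chain this gives $m_{\al,\beta}=m^{(2)}=m^{(3)}$.

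The two remaining bridges are handled by the elementary pure dilation $\vp_t:=t\vp$. For $m^{(4)}\ge m_{\al,\beta}$: any $\vp$ with $K^c_{\al,\beta}(\vp)=0$ satisfies $K^c_{\al,\beta}(\vp_t)=(\al+\beta)t^2\|\nabla\vp\|_{L^2}^2(1-t^2)<0$ for $t>1$, and $H^c_{\al,\beta}(\vp_t)\to H^c_{\al,\beta}(\vp)$ as $t\to 1^{+}$ by continuity, so $H^c_{\al,\beta}(\vp)\ge m^{(3)}=m_{\al,\beta}$; together with $m^{(1)}\ge m^{(4)}$ this closes $m^{(1)}=m^{(4)}=m_{\al,\beta}$. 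For $m^{(5)}\le m_{\al,\beta}$: starting from any $\vp$ with $K^c_{\al,\beta}(\vp)<0$, the intermediate value theorem applied to $t\mapsto K^c_{\al,\beta}(t\vp)$ on $(0,1]$ produces $t_0\in(0,1)$ with $K^c_{\al,\beta}(t_0\vp)=0$, and monotonicity of $H^c_{\al,\beta}(t\vp)$ in $t$ gives $H^c_{\al,\beta}(t_0\vp)<H^c_{\al,\beta}(\vp)$, whence $m^{(5)}\le m^{(3)}=m_{\al,\beta}$. The chief obstacle is the scaling step: one needs a dilation that simultaneously retains $K^c_{\al,\beta}<0$ and drives $K_{\al,\beta}-K^c_{\al,\beta}$ to zero, which is precisely what the $\dot H^1$-invariant scaling does thanks to the critical/subcritical split of the nonlinearity, and one must verify case-by-case that the $\lambda$-dependent remainder of $H_{\al,\beta}$ also disappears in the limit.
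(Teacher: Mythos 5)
Your proof is correct, and its engine is the same as the paper's: the $\dot H^1$-invariant dilation $e^{\lambda}\vp(e^{\lambda}x)$ (the paper's $\vp^{\lambda}_{1,-1}$), which freezes $\|\nabla\vp\|_{L^2}$, $\|\vp\|_{L^4}$ and hence $K^c_{\al,\beta}$ while sending the subcritical $L^{10/3}$ term to zero, so that $K_{\al,\beta}\to K^c_{\al,\beta}$ and $H_{\al,\beta}\to H^c_{\al,\beta}$; this is exactly how the paper proves its two nontrivial reverse inequalities ($m^{(2)}_{\al,\beta}\geq m''_{\al,\beta}$ and, in the regime $2\al<-3\beta$, $m^c_{\al,\beta}\geq m_{\al,\beta}$). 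Where you genuinely diverge is in the bookkeeping between the constraint sets $\{K^c_{\al,\beta}\leq 0\}$, $\{K^c_{\al,\beta}<0\}$ and $\{K^c_{\al,\beta}=0\}$: the paper passes between them with the $(\al,\beta)$-scaling family $\vp^{\lambda}_{\al,\beta}$, using the sign facts $\Lab K^c_{\al,\beta}(\vp)<0$ on $\{K^c_{\al,\beta}\leq0\}$ and $\Lab H_{\al,\beta}>0$ from Lemma \ref{lem:dircp} plus continuity as $\lambda\to0$, whereas you exploit the pointwise comparisons $K^c_{\al,\beta}\leq K_{\al,\beta}$ and $H^c_{\al,\beta}\leq H_{\al,\beta}$ to sandwich all five infima below $m_{\al,\beta}$ at the outset, and then close the boundary cases with the elementary scalar dilation $t\vp$ together with the intermediate value theorem and monotonicity of $t\mapsto H^c_{\al,\beta}(t\vp)$. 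Your arrangement is slightly more economical (one application of the critical scaling suffices, the equality $m^{(2)}=m_{\al,\beta}$ falls out of the sandwich for free, and no variational-derivative computations are needed beyond Lemma \ref{lem:equi11}), while the paper's version is more uniform with the surrounding variational machinery it reuses elsewhere; the only cosmetic gap in yours is that for $m^{(4)}\geq m_{\al,\beta}$ you spell out only the case $K^c_{\al,\beta}(\vp)=0$, the case $K^c_{\al,\beta}(\vp)<0$ being already covered by $m^{(3)}=m_{\al,\beta}$, which is clear from context.
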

\begin{proof}
First, we denote
$ \mab^{(1)}$ and $\mab^{(2)}$ by  $$\mab^{(1)}=\inf\set{H_{\al,\beta}(\vp): \vp\in H^1(\R^4)\backslash \{0\},\;K^c_{\al,\beta}(\vp)\leq 0}$$
and
 $$\mab^{(2)}=\inf\set{H_{\al,\beta}(\vp): \vp\in H^1(\R^4)\backslash \{0\},\;K^c_{\al,\beta}(\vp)< 0}.$$
From the definition of $\Hab$ and the fact that $\Kab(\vp) \geq \Kcab(\vp)$, we have
\beq\label{depend mc 1}
\mab^{(2)}\leq\mab''=\mab.
\eeq
To prove $\mab^{(2)}\geq\mab''$, for any $\vp\in H^1(\R^4)\backslash \{0\}$ with $\Kcab(\vp)<0$, we have
$$
\Kab(\vp^\lambda_{1,-1})=(\al+\beta)\left(\norm{\nabla\vp}^2_{L^2}-\norm{\vp}^4_{L^4}
\right)
+e^{-\frac{2}{3}\lambda}\Big(\al+\frac{6}{5}\beta\Big)\norm{\vp}^{\frac{10}{3}}_{L^\frac{10}{3}}\rightarrow\Kcab(\vp),
$$
as $\lambda\rightarrow+\infty.$ And for $\lambda>0$,  $\Hab(\vp^\lambda_{1,-1})\leq \Hab(\vp),$ which implies $\mab^{(2)}\geq\mab''.$
On the other hand, it is trivial that
\beq\label{depend mc 2}
\mab^{(1)}\leq\mab^{(2)}.
\eeq
Next, we need to prove $\mab^{(2)}\leq\mab^{(1)}$. Let $\vp\in H^1(\R^4)\backslash \{0\}$ such that $\Kcab(\vp)\leq 0$, then  from
$$\Lab\Kcab(\vp)=2(\al+\beta)^2\left(  \norm{\nabla\vp}^2_{L^2} -2\norm{\vp}^4_{L^4}      \right)<0, $$
we have $\Kcab(\vp^{\lambda}_{\al,\beta})<0$, for $\lambda>0.$
As above we have $\Hab(\vp^{\lambda}_{\al,\beta})\rightarrow \Hab(\vp)$, as $\lambda\rightarrow0.$ This implies $\mab^{(2)}\leq\mab^{(1)}$.

Second, we denote  $$\mcab= \inf\set{\Hcab(\vp): \vp\in H^1(\R^4)\backslash \{0\},\;K^c_{\al,\beta}(\vp)= 0} .$$
One can easily find that
\beq\label{mc1}
\begin{split}
 &\inf\set{\Hcab(\vp): \vp\in H^1(\R^4)\backslash \{0\},\;K^c_{\al,\beta}(\vp)\leq 0} \\
 \leq & \inf\set{\Hcab(\vp): \vp\in H^1(\R^4)\backslash \{0\},\;K^c_{\al,\beta}(\vp)< 0}.
\end{split}
\eeq
Let $\vp\in H^1(\R^4)\backslash \{0\}$ such that $\Kcab(\vp)\leq 0$. For $\lambda >0$, we have $\Kcab(\vp^\lambda_{\al,\beta})<0$, and
$$\Hcab(\vp^\lambda_{\al,\beta})\rightarrow\Hcab(\vp) \text{, as\; } \lambda\rightarrow0.$$
 Hence, we have
\beq\label{mc2}
\begin{split}
 &\inf\set{\Hcab(\vp): \vp\in H^1(\R^4)\backslash \{0\},\;K^c_{\al,\beta}(\vp)\leq 0} \\
 \geq & \inf\set{\Hcab(\vp): \vp\in H^1(\R^4)\backslash \{0\},\;K^c_{\al,\beta}(\vp)< 0}.
\end{split}
\eeq
It is trivial that
\beq\label{mc3}
\inf\set{\Hcab(\vp): \vp\in H^1(\R^4)\backslash \{0\},\;K^c_{\al,\beta}(\vp)\leq 0}\leq\mcab.
\eeq
Let $\vp\in H^1(\R^4)\backslash \{0\}$ such that $\Kcab(\vp)< 0$. Then there exist  $\lambda_0< 0$ such that
\beq\label{mc4}
\Kcab(\vp^{\lambda_0}_{\al,\beta})= 0.
\eeq
From $\Lab\Hab(\phi)>0$ for any $\phi\in H^1(\R^4)\backslash \{0\}$, we have $\Hcab(\vp^{\lambda_0}_{\al,\beta})\leq \Hcab(\vp), $
which implies
\beq\label{mc5}
\inf\set{\Hcab(\vp): \vp\in H^1(\R^4)\backslash \{0\},\;K^c_{\al,\beta}(\vp)\leq 0}\geq\mcab.
\eeq
Combine    \eqref{mc1},  \eqref{mc2}, \eqref{mc3} \eqref{mc4} \eqref{mc5} together, we can obtain the last two inequalities in \eqref{m=mc}.

Finally, we just need to show $\mcab=\mab$. It is trivial that $\mcab\leq \mab.$ By the definition of $\Hcab $ and $\Hab$,
we just need to show $\mcab\geq \mab$ in the case of $2\al<-3\beta. $

Let $\phi\in H^1(\R^4)\backslash \{0\}$ such that $\Kcab(\vp)<0.$ Then for any $\lambda\in \R$, $\Kcab(\vp^\lambda_{1,-1})=\Kcab(\vp)$, and $$\Hab(\vp^\lambda_{1,-1})=\Hcab(\vp)-\frac{2\al+3\beta}{10(\al+\beta)}e^{-\frac{2}{3}\lambda} \norm{\vp}^\frac{10}{3}_{L^\frac{10}{3}} \rightarrow\Hcab(\vp) \text{ as } \lambda\rightarrow+\infty.$$
This implies $\mcab\geq \mab.$

 Therefore,
we complete the proof of Lemma \ref{lem:equivl}.
\end{proof}

We remark that by Lemma \ref{lem:equivl} and  the definition of
$\mcab$ and $\Hcab$, we have \beq\label{remark m}
\mab=\mcab=\inf\set{ E_c(\vp): \vp\in H^1(\R^4)\backslash \{0\},\;
\norm{\nabla\vp}^2_{L^2} =\norm{\vp}^4_{L^4} }. \eeq which implies
that $\mab$ is independent of $(\al,\beta)$ if $(\al,\beta)\in
\Omega$. Hence we can denote $m$ by $m=\mab$ for any $(\al,\beta)\in
\Omega$. Now, we can make use of the sharp Sobolev constant in
\cite{Aubin:Sharp contant:Sobolev, Talenti:best constant} to compute
the minimization $m$, which also shows Proposition
\ref{prop:minima}.
\begin{lem}\label{m=EcW}
For the minimization $m_{\al,\beta}$, we have
$$m=E_c(W)$$
\end{lem}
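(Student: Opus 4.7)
By Lemma \ref{lem:equivl} and formula \eqref{remark m}, we already know
$$m = \inf\{E_c(\varphi) : \varphi \in H^1(\mathbb{R}^4)\setminus\{0\},\; \|\nabla \varphi\|_{L^2}^2 = \|\varphi\|_{L^4}^4\}.$$
The plan is to rewrite $E_c$ on the constraint set, apply the sharp Sobolev embedding $\dot{H}^1(\mathbb{R}^4)\hookrightarrow L^4(\mathbb{R}^4)$, and verify that $W$ saturates both the constraint and the inequality.

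First, on the constraint $\|\nabla \varphi\|_{L^2}^2 = \|\varphi\|_{L^4}^4$ one has
$$E_c(\varphi) = \tfrac{1}{2}\|\nabla \varphi\|_{L^2}^2 - \tfrac{1}{4}\|\varphi\|_{L^4}^4 = \tfrac{1}{4}\|\nabla \varphi\|_{L^2}^2,$$
so it suffices to minimize $\|\nabla \varphi\|_{L^2}^2$ subject to $\|\nabla \varphi\|_{L^2}^2 = \|\varphi\|_{L^4}^4$.

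Next, invoke the sharp Sobolev inequality of Aubin--Talenti \cite{Aubin:Sharp contant:Sobolev, Talenti:best constant}: there is a best constant $C_*$ such that $\|\varphi\|_{L^4}^4 \leq C_*\|\nabla \varphi\|_{L^2}^4$ for every $\varphi \in \dot{H}^1(\mathbb{R}^4)$, with equality (up to translation/scaling) exactly for $W(x) = (1+|x|^2/8)^{-1}$. Combining this with the constraint gives
$$\|\nabla \varphi\|_{L^2}^2 = \|\varphi\|_{L^4}^4 \leq C_* \|\nabla \varphi\|_{L^2}^4,$$
so $\|\nabla \varphi\|_{L^2}^2 \geq C_*^{-1}$ for any admissible $\varphi$, and consequently $E_c(\varphi) \geq \tfrac{1}{4} C_*^{-1}$.

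Finally, check that $W$ itself is admissible and saturates the bound: since $W$ solves $-\Delta W = W^3$, multiplying by $W$ and integrating by parts gives $\|\nabla W\|_{L^2}^2 = \|W\|_{L^4}^4$, so the constraint holds and
$$E_c(W) = \tfrac{1}{4}\|\nabla W\|_{L^2}^2.$$
Moreover, equality in the sharp Sobolev inequality at $W$ yields $\|\nabla W\|_{L^2}^2 = C_*^{-1}$, hence $E_c(W) = \tfrac14 C_*^{-1}$ matches the lower bound. This gives $m = E_c(W) = \tfrac14\|\nabla W\|_{L^2}^2$, as claimed. The only delicate input is the sharp Sobolev constant together with its extremizer, which we import as a black box; the rest of the argument is just the arithmetic of collapsing $E_c$ on the constraint surface.
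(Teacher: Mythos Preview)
Your lower bound $m\geq \tfrac14 C_*^{-1}=E_c(W)$ is correct and matches the paper's use of the sharp Sobolev inequality. The gap is in the upper bound: you assert that ``$W$ itself is admissible,'' but the infimum in \eqref{remark m} is taken over $H^1(\R^4)\setminus\{0\}$, and $W\notin H^1(\R^4)$. Indeed $W(x)=(1+|x|^2/8)^{-1}\sim 8|x|^{-2}$ at infinity, so $|W|^2\sim |x|^{-4}$, which fails to be integrable on $\R^4$; thus $W\notin L^2(\R^4)$. You have therefore not shown that the value $E_c(W)$ is actually approached by the $H^1$ constrained infimum, so $m\leq E_c(W)$ is missing.

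The paper closes this gap by relaxing the constraint to $\|\nabla\varphi\|_{L^2}^2\leq\|\varphi\|_{L^4}^4$ and then invoking the density of $H^1$ in $\dot H^1$ to pass to an infimum over $\dot H^1$, where $W$ (or a scalar multiple of it) is a legitimate competitor. Concretely: take $\varphi_n\in H^1$ with $\varphi_n\to \lambda W$ in $\dot H^1$ for some $\lambda>1$; then $\|\nabla\varphi_n\|_{L^2}^2<\|\varphi_n\|_{L^4}^4$ for large $n$, and rescaling $\varphi_n\mapsto\mu_n\varphi_n$ with $\mu_n^2=\|\nabla\varphi_n\|_{L^2}^2/\|\varphi_n\|_{L^4}^4\leq1$ lands on the equality constraint while giving $\tfrac14\|\nabla(\mu_n\varphi_n)\|_{L^2}^2=\tfrac14(\|\nabla\varphi_n\|_{L^2}/\|\varphi_n\|_{L^4})^4\to\tfrac14\|W\|_{L^4}^4$. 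Adding this approximation step to your argument would make it complete.
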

\begin{proof}
From Lemma \ref{lem:equivl} and \eqref{remark m}, we have \beq
\begin{split}
m= &\inf\set{E_c(\vp): \vp\in H^1(\R^4)\backslash \{0\},\;  \norm{\nabla\vp}^2_{L^2} =\norm{\vp}^4_{L^4} }\\
 = & \inf\set{\frac{1}{4} \norm{\nabla\vp}^2_{L^2}: \vp\in H^1(\R^4)\backslash \{0\},\;  \norm{\nabla\vp}^2_{L^2} =\norm{\vp}^4_{L^4} }\\
\geq & \inf\set{\frac{1}{4} \norm{\nabla\vp}^2_{L^2}: \vp\in H^1(\R^4)\backslash \{0\},\;  \norm{\nabla\vp}^2_{L^2} \leq\norm{\vp}^4_{L^4} },
\end{split}
\eeq
where the equality holds if and only if the minimization is attained by some $\vp$ with $\norm{\nabla\vp}^2_{L^2} =\norm{\vp}^4_{L^4}.$ Next, we have
\beq
\begin{split}
&\inf\set{\frac{1}{4} \norm{\nabla\vp}^2_{L^2}: \vp\in H^1(\R^4)\backslash \{0\},\;  \norm{\nabla\vp}^2_{L^2} \leq\norm{\vp}^4_{L^4} }\\
=&\inf\set{\frac{1}{4} \norm{\nabla\vp}^2_{L^2}: \vp\in \dot{H}^1(\R^4)\backslash \{0\},\;  \norm{\nabla\vp}^2_{L^2} \leq\norm{\vp}^4_{L^4} }\\
=& \inf\set{\frac{1}{4} \left( \frac{\norm{\nabla\vp}_{L^2}}{\norm{\vp}_{L^4}}\right)^4   : \vp\in \dot{H}^1(\R^4)\backslash \{0\},\;
 \norm{\nabla\vp}^2_{L^2} \leq\norm{\vp}^4_{L^4} }\\
=&\frac{1}{4}\|W\|_{L^4}^{4}\\
=&E_c(W),
\end{split}
\eeq
where we used the fact that $H^1$ is dense in $ \dot{H}^1$ and the sharp Sobolev inequality
$$ \|\vp\|_{L^4(\R^4)} \leq \|W\|^{-1}_{L^4(\R^4)} \|\nabla\vp\|_{L^2(\R^4)} .  $$
\end{proof}
Hence we can define $\mathcal{K}^+_{\al,\beta}$ and $\mathcal{K}^-_{\al,\beta}$  by
\beq
\mathcal{K}^+_{\al,\beta}=\set{\vp\in H^1(\R^4):  E(\vp)<m,\;\Kab(\vp)\geq0 }
\eeq
and
\beq
\mathcal{K}^-_{\al,\beta}=\set{\vp\in H^1(\R^4):  E(\vp)<m,\;\Kab(\vp)<0 }.
\eeq

Now we are going to prove that the regions $\mathcal{K}^+_{\al,\beta}$ and  $\mathcal{K}^-_{\al,\beta}$ are independent of $(\al,\beta)$.
The proof is mainly based on the minimal property of $m_{\al,\beta}$ and the relation between energy $E$ and its variational derivatives $\Kab$,
which is similar to Lemma 2.9 in \cite{Nakanishi-2010}.
\begin{lem}\label{independent K}
If $(\al,\beta) \in \Omega$, then $\mathcal {K}^{\pm}_{\al,\beta}$ are independent of $(\al,\beta)$.
\end{lem}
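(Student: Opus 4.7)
Since Lemma \ref{m=EcW} (equivalently, Proposition \ref{prop:minima}) already shows that $m_{\al,\beta}=m=E_c(W)$ is independent of $(\al,\beta)\in\Omega$, the energy condition $E(\vp)<m$ appearing in the definitions of $\mathcal{K}^\pm_{\al,\beta}$ does not depend on $(\al,\beta)$. Thus the entire statement reduces to showing that the sign of $K_{\al,\beta}(\vp)$ does not change as $(\al,\beta)$ varies over $\Omega$, whenever $\vp\in H^1(\R^4)$ satisfies $E(\vp)<m$. The case $\vp=0$ is trivial since $K_{\al,\beta}(0)=0$ for all $(\al,\beta)$, placing $0$ in every $\mathcal{K}^+_{\al,\beta}$.

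The plan is a short convexity/linearity argument. The key observation is that
$$K_{\al,\beta}(\vp)=(\al+\beta)\Big(\norm{\nabla\vp}_{L^2}^2-\norm{\vp}_{L^4}^4\Big)+\Big(\al+\tfrac{6}{5}\beta\Big)\norm{\vp}_{L^{10/3}}^{10/3}$$
depends linearly on $(\al,\beta)\in\R^2$. On the other hand, $\Omega$ is a convex cone: it is the intersection of the half-planes $\{\al\geq 0\}$ and $\{5\al+6\beta\geq 0\}$, with the origin removed, and any segment joining two points of $\Omega$ remains in $\Omega$ (it cannot pass through the origin, since $\al\geq 0$ on both endpoints forces the combination to degenerate only at $(0,0)$, which is excluded).

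Suppose now, toward a contradiction, that for some $\vp\neq 0$ with $E(\vp)<m$ there exist $(\al_1,\beta_1),(\al_2,\beta_2)\in\Omega$ with $K_{\al_1,\beta_1}(\vp)<0$ and $K_{\al_2,\beta_2}(\vp)\geq 0$. By linearity of $(\al,\beta)\mapsto K_{\al,\beta}(\vp)$ along the segment from $(\al_1,\beta_1)$ to $(\al_2,\beta_2)$ (which, by the previous paragraph, lies in $\Omega$), the intermediate-value theorem provides some $(\al_0,\beta_0)\in\Omega$ with $K_{\al_0,\beta_0}(\vp)=0$. But then by the very definition of $m_{\al_0,\beta_0}$ we obtain $E(\vp)\geq m_{\al_0,\beta_0}=m$, contradicting $E(\vp)<m$. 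This simultaneously rules out the possibility of $\vp$ lying in $\mathcal{K}^-_{\al_1,\beta_1}\cap\mathcal{K}^+_{\al_2,\beta_2}$ and, by symmetric reasoning, any transition across the two regions as $(\al,\beta)$ varies in $\Omega$, giving the desired independence.

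The only delicate point is justifying that the line segment connecting two elements of $\Omega$ stays inside $\Omega$; this is where the exact form of the constraints $\al\geq 0$ and $5\al+6\beta\geq 0$ (both linear, defining a convex cone with apex at the excluded origin) is used, and ruling out the degenerate case where the segment hits $(0,0)$ is a short check as sketched above. Once this is in place, the whole argument is essentially algebraic: linearity of $K_{\al,\beta}(\vp)$ in $(\al,\beta)$, convexity of $\Omega$, and the minimality property of $m$ do all the work.
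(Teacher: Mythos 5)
Your proof is correct, but it takes a genuinely different route from the paper's. The paper fixes the second pair $(\al',\beta')$ and deforms the \emph{function}: starting from $\vp\in\K^+_{\ab}\cap\K^-_{\al',\beta'}$ it runs the scaling flow $\vp^\lambda_{\ab}$ for $\lambda\le 0$, using the monotonicity of $E$ while $\Kab>0$, the fact that $\vp^\lambda_{\ab}\to0$ in $\dot H^1$ (since $\al+\beta>0$), Proposition \ref{prop:tend0}, and a continuity argument to produce $\lambda_0$ with $K_{\al',\beta'}(\vp^{\lambda_0}_{\ab})=0$ and $E(\vp^{\lambda_0}_{\ab})<m$, contradicting the minimality of $m$. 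You instead fix the function and move the \emph{parameters}: since $(\al,\beta)\mapsto\Kab(\vp)$ is linear and $\Omega$ is a convex cone whose segments avoid the origin (this needs both constraints $\al\ge0$ and $5\al+6\beta\ge0$, as your final paragraph acknowledges; the check with $\al\ge0$ alone would not suffice), a sign change along the segment yields some $(\al_0,\beta_0)\in\Omega$ with $K_{\al_0,\beta_0}(\vp)=0$ and $\vp\neq0$, contradicting $E(\vp)<m=m_{\al_0,\beta_0}$. Both arguments lean on the previously established independence $\mab=m$ and on the definition of $\mab$ as an infimum over $\{\Kab=0\}$; yours avoids the scaling analysis and Proposition \ref{prop:tend0} altogether, at the cost only of the elementary geometric check on $\Omega$, and is arguably the shorter and more transparent of the two.
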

\begin{proof}

From the definition of $\Omega$ in \eqref{Omega}, we have  $\al+\beta>0$, if $(\al,\beta)\in \Omega$.
First, $\Kabp\cup\Kabn$ is independent of $(\al,\beta)$, since
\beq\label{K depend}
\Kabp\cup\Kabn=\set{\vp\in H^1(\R^4):  E(\vp)<m}. 
\eeq
For $\vp\in \Kabp$, by the definition of $m$, if $\Kab(\vp)=0$, then $\vp=0$. Hence if $\vp\in\Kabp$ and $\vp\neq0$, then we have $\Kab(\vp)>0$.
From this we have the scaling variation  of $\vp\in \Kabp$, $\vp^\lambda_{\al,\beta}$ such that the following properties:
\begin{enumerate}
  \item[(1)] By the definition of $m$, $\Kab(\vp^{\lambda}_{\al,\beta})>0$ is preserved, provided $E(\vp^\lambda_{\al,\beta})<m$;
  \item[(2)] $E(\vab^\lambda)$ does not increase, as $\lambda\leq 0$ decreases, if $$\Lab E(\vp^\lambda_{\al,\beta})=\Kab(\vp^\lambda_{\al,\beta})>0. $$
\end{enumerate}

If there exists $\vp\in \Kabp\cap\mathcal{K}^-_{\al',\beta'}$, where $(\al,\beta), (\al',\beta')\in \Omega$.
By the definition of $m$ and the fact that $m>0$,  we have $\vp\neq0$ and $\Kab(\vp)>0$.

Then, from the above two facts, we have $E(\vab^\lambda)<m$, for
$\lambda\leq0$. And by $\al+\beta>0$, we have
$\vab^\lambda\rightarrow0$ in $\dot{H}^1$, as
$\lambda\rightarrow-\infty$.  But by Proposition \ref{prop:tend0},
there exists $\lambda_0<0$ such that
$K_{\al',\beta'}(\vp^{\lambda_0}_{\al,\beta})=0$, which together
with $E(\vab^{\lambda_0})<m$ contradicts the minimization of $m$.
Hence $\Kabp\cap\mathcal{K}^-_{\al',\beta'}=\emptyset$ for any
$(\al,\beta), (\al',\beta')\in \Omega$,  which and \eqref{K depend}
imply the claim.

\end{proof}

After the computation of the minimization $m$, we next give some
variational estimates.

\begin{lem}\label{lem:kgeq0} For any $\varphi\in H^1(\R^4)$ with $K_{2,-1}(\varphi)\geq0$, we have
\beq \label{equ:2.8} \int_{\R^4}\Big(\frac14|\nabla
\varphi|^2+\frac1{10}|\varphi|^\frac{10}3\Big)\leq
E(\varphi)\leq\int_{\R^4}\Big(\frac12|\nabla
\varphi|^2+\frac{3}{10}|\varphi|^\frac{10}{3}\Big). \eeq

\end{lem}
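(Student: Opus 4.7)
The plan is purely algebraic: both inequalities will fall out of writing the quantity $E(\varphi)$ minus the respective bound as an explicit multiple of $K_{2,-1}(\varphi)$, plus a sign-definite remainder.

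First I would compute $K_{2,-1}$ directly from the formula for $K_{\al,\beta}$ in the introduction. With $(\al,\beta)=(2,-1)$ we have $\al+\beta=1$ and $\al+\tfrac{6}{5}\beta=\tfrac{4}{5}$, so
$$K_{2,-1}(\varphi)=\int_{\R^4}\left(|\nabla\varphi|^2-|\varphi|^4\right)dx+\frac{4}{5}\int_{\R^4}|\varphi|^{10/3}\,dx.$$
The upper bound in \eqref{equ:2.8} is then immediate: since $-\tfrac14|\varphi|^4\leq 0$ pointwise, dropping that term in $E(\varphi)$ yields exactly $\tfrac12|\nabla\varphi|^2+\tfrac{3}{10}|\varphi|^{10/3}$.

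For the lower bound, I would form the difference
$$E(\varphi)-\int_{\R^4}\Big(\frac14|\nabla\varphi|^2+\frac{1}{10}|\varphi|^{10/3}\Big)dx=\int_{\R^4}\Big(\frac14|\nabla\varphi|^2-\frac14|\varphi|^4+\frac{1}{5}|\varphi|^{10/3}\Big)dx,$$
and then observe that the right-hand side is exactly $\tfrac14 K_{2,-1}(\varphi)$. By the hypothesis $K_{2,-1}(\varphi)\geq 0$, the lower bound in \eqref{equ:2.8} follows.

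There is no real obstacle here: the lemma is essentially a bookkeeping identity that packages the coercivity of $E$ on $\{K_{2,-1}\geq 0\}$ into something directly usable later (for instance in comparing $E(\varphi)$ to $\|\nabla\varphi\|_{L^2}^2$ and to the $L^{10/3}$ mass when estimating critical element dynamics). The only care needed is checking that the coefficients $\tfrac14(\al+\beta)=\tfrac14$ and $\tfrac14(\al+\tfrac65\beta)=\tfrac15$ match the corresponding differences $\tfrac12-\tfrac14$ and $\tfrac{3}{10}-\tfrac{1}{10}$ in $E(\varphi)$, which they do.
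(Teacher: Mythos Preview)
Your proposal is correct and takes essentially the same approach as the paper: the upper bound is obtained by dropping the nonpositive $-\tfrac14|\varphi|^4$ term, and the lower bound follows from the identity $E(\varphi)=\int_{\R^4}\big(\tfrac14|\nabla\varphi|^2+\tfrac{1}{10}|\varphi|^{10/3}\big)\,dx+\tfrac14K_{2,-1}(\varphi)$. Your version is slightly more explicit in writing out $K_{2,-1}$ and checking the coefficients, but the argument is the same.
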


\begin{proof}
On one hand, the right hand side of \eqref{equ:2.8} is trivial. On
the other hand, by the definition of $E$ and $\Kab$, we have
$$E(\varphi)=\int_{\R^4}\Big(\frac14|\nabla \varphi|^2+\frac1{10}|\varphi|^\frac{10}3\Big)+\frac14K_{2,-1}(\varphi),$$
which implies the left hand side of \eqref{equ:2.8}.
\end{proof}

At the last of this section, we give the uniform bounds on the
scaling derivative functional $\Kab(\varphi)$ with the energy
$E(\varphi)$ below the threshold $m$, which plays an important role
for the blow-up and scattering analysis.

\begin{lem}\label{uniform bound}
For any $\varphi \in H^1$ with $E(\varphi)<m$.
\begin{enumerate}
\item If $K_{2,-1}(\varphi)<0$, then
\begin{align}\label{uniform:K:negative}
K_{2,-1}(\varphi)  \leq -\frac83\big(m-E(\varphi)\big),
\end{align}
and
\begin{align}\label{uniform:K:negative12}
\|\nabla \varphi\|_{L_x^2}>\|\nabla W\|_{L_x^2}.
\end{align}
\item If $K_{2,-1}(\varphi)\geq 0$, then
\begin{align}\label{uniform:K:positive}
K_{2,-1}(\varphi)\geq \min\left(\frac83(m-E(\varphi)), \frac{1}{5}
\big\|\nabla \varphi \big\|^2_{L^2} +  \frac1{100}
\big\|\varphi\big\|^\frac{10}{3}_{L^\frac{10}{3}} \right),
\end{align}
and
\begin{align}\label{uniform:K:positive12}
\|\nabla \varphi\|_{L_x^2}<\|\nabla W\|_{L_x^2}.
\end{align}
\end{enumerate}
\end{lem}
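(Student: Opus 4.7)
My plan rests on the algebraic identity
\[
E(\varphi) = H_{2,-1}(\varphi) + \tfrac{3}{8}K_{2,-1}(\varphi),
\]
arising from $\bar\mu = 8/3$ at $(\alpha,\beta)=(2,-1)$, together with the explicit formula $H_{2,-1}(\varphi) = \tfrac{1}{8}\|\nabla\varphi\|_{L^2}^2+\tfrac{1}{8}\|\varphi\|_{L^4}^4$ read off from \eqref{H} in the case $2\alpha\geq-3\beta$. The variational characterization of $m$ in Lemma \ref{lem:equivl} and the sharp Sobolev inequality $\|\varphi\|_{L^4}^4\leq \|\nabla\varphi\|_{L^2}^4/\|\nabla W\|_{L^2}^2$ from Lemma \ref{m=EcW} will do all the work; part (1) then follows immediately, while part (2) requires a case split.

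For part (1), the hypothesis $K_{2,-1}(\varphi)<0$ places $\varphi$ in the admissible set of Lemma \ref{lem:equivl}, yielding $H_{2,-1}(\varphi)\geq m$, and the displayed identity rearranges to \eqref{uniform:K:negative}. For \eqref{uniform:K:negative12}, the sign of $K_{2,-1}(\varphi)$ forces $\|\nabla\varphi\|_{L^2}^2<\|\varphi\|_{L^4}^4$, and sharp Sobolev then gives $\|\nabla\varphi\|_{L^2}^2>\|\nabla W\|_{L^2}^2$. The gradient bound \eqref{uniform:K:positive12} in part (2) is equally immediate: Lemma \ref{lem:kgeq0} combined with $E(\varphi)<m$ gives $\tfrac{1}{4}\|\nabla\varphi\|_{L^2}^2\leq E(\varphi)<m=\tfrac{1}{4}\|\nabla W\|_{L^2}^2$.

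The heart of the proof is the two-term minimum in \eqref{uniform:K:positive}, which I would establish by dichotomy on the ratio $\|\varphi\|_{L^4}^4/\|\nabla\varphi\|_{L^2}^2$ at the threshold $\tfrac{4}{5}$. If $\|\varphi\|_{L^4}^4\leq\tfrac{4}{5}\|\nabla\varphi\|_{L^2}^2$, inspection of the definition of $K_{2,-1}$ directly yields $K_{2,-1}(\varphi)\geq\tfrac{1}{5}\|\nabla\varphi\|_{L^2}^2+\tfrac{4}{5}\|\varphi\|_{L^{10/3}}^{10/3}$, which dominates the second entry of the $\min$. In the opposite case, sharp Sobolev forces $\|\nabla\varphi\|_{L^2}^2>\tfrac{4}{5}\|\nabla W\|_{L^2}^2$, and I would prove the equivalent form $H_{2,-1}(\varphi)+\tfrac{3}{4}K_{2,-1}(\varphi)\geq m$ of the bound $K_{2,-1}(\varphi)\geq\tfrac{8}{3}(m-E(\varphi))$. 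Expanding the left side yields
\[
\tfrac{7}{8}\|\nabla\varphi\|_{L^2}^2-\tfrac{5}{8}\|\varphi\|_{L^4}^4+\tfrac{3}{5}\|\varphi\|_{L^{10/3}}^{10/3},
\]
and applying sharp Sobolev to the quartic term and discarding the positive $L^{10/3}$-part reduces matters to the one-variable inequality $y(7M-5y)/(8M)\geq M/4$, where $y=\|\nabla\varphi\|_{L^2}^2$ and $M=\|\nabla W\|_{L^2}^2$. This holds throughout the interval $[\tfrac{2}{5}M,M]$, which comfortably contains the range $(\tfrac{4}{5}M,M)$ in which $y$ now lies.

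The only real subtlety is the calibration of the threshold $\tfrac{4}{5}$: it is chosen so that in the first subcase the coefficient of $\|\nabla\varphi\|_{L^2}^2$ is exactly the $\tfrac{1}{5}$ appearing in \eqref{uniform:K:positive}, while simultaneously the Sobolev-enforced lower bound on $\|\nabla\varphi\|_{L^2}^2$ in the second subcase places $y$ inside the range where the resulting quadratic beats $m$. Once this balance is identified, every remaining step is a routine computation.
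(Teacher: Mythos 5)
Your argument is correct, but it reaches the two key inequalities by a genuinely different route than the paper. The paper proves both \eqref{uniform:K:negative} and the first branch of \eqref{uniform:K:positive} dynamically: it sets $j(\lambda)=E(\varphi^{\lambda}_{2,-1})$, derives the differential inequality $j''=\tfrac83 j'-\tfrac23 e^{2\lambda}\|\nabla\varphi\|_{L^2}^2-\tfrac43 e^{4\lambda}\|\varphi\|_{L^4}^4$, locates a $\lambda_0$ with $K_{2,-1}(\varphi^{\lambda_0}_{2,-1})=0$ (so $j(\lambda_0)\ge m$), and integrates $j''\lessgtr \pm\tfrac83 j'$ over $[\lambda_0,0]$ or $[0,\lambda_0]$; its case split in part (2) is at $4K_{2,-1}(\varphi)$ versus $\|\varphi\|_{L^4}^4$. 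You instead work statically from the identity $E=H_{2,-1}+\tfrac38K_{2,-1}$ with $H_{2,-1}(\varphi)=\tfrac18\|\nabla\varphi\|_{L^2}^2+\tfrac18\|\varphi\|_{L^4}^4$: part (1) follows at once from the constrained characterization of $m$ (note this is Lemma \ref{lem:equi11}, the $K_{\alpha,\beta}$-constrained infimum, not Lemma \ref{lem:equivl}, which constrains $K^c_{\alpha,\beta}$ --- a harmless mis-citation), and in part (2) you split at $\|\varphi\|_{L^4}^4\lessgtr\tfrac45\|\nabla\varphi\|_{L^2}^2$, handling the hard branch by sharp Sobolev plus the elementary quadratic inequality $y(7M-5y)\ge 2M^2$ on $[\tfrac25M,M]$, with $y=\|\nabla\varphi\|_{L^2}^2\in(\tfrac45M,M)$ guaranteed by Sobolev and by \eqref{uniform:K:positive12}, which you prove first from Lemma \ref{lem:kgeq0} (no circularity). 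Your checks are right: the threshold $\tfrac45$ gives exactly the $\tfrac15\|\nabla\varphi\|_{L^2}^2+\tfrac45\|\varphi\|_{L^{10/3}}^{10/3}$ bound in the easy branch, and $H_{2,-1}+\tfrac34K_{2,-1}\ge m$ is indeed equivalent to $K_{2,-1}\ge\tfrac83(m-E)$. What the paper's flow argument buys is uniformity of method (the same monotonicity computation serves both signs of $K_{2,-1}$ and avoids invoking the explicit value $m=\tfrac14\|\nabla W\|_{L^2}^2$ or the sharp Sobolev constant at this stage); what your argument buys is brevity and transparency, at the mild cost of leaning on Lemma \ref{m=EcW} and on the explicit optimizer constant, and of the small bookkeeping point that $K_{2,-1}(\varphi)<0$ (resp.\ the strict inequality in the second branch) forces $\varphi\neq0$ so the divisions by $\|\nabla\varphi\|_{L^2}^2$ are legitimate.
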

\begin{proof} By Lemma \ref{lem:dircp}, for any $\varphi \in H^1$, we have
\begin{align*}
\mathcal{L}_{2,-1}^2 E(\varphi) = \frac83 \mathcal{L}_{2,-1}
E(\varphi)- \frac{2}{3}\big\|\nabla \varphi\big\|^2_{L^2} - \frac43
\big\|\varphi\big\|^4_{L^4}.
\end{align*}
Let $j(\lambda)=E(\varphi^{\lambda}_{2,-1})$, then we have
\begin{align}\label{diff J}
j''(\lambda) =   \frac83j'(\lambda) - \frac23
e^{2\lambda}\big\|\nabla \varphi\big\|^2_{L^2} -
\frac43e^{4\lambda}\big\|\varphi\big\|^4_{L^4}.
\end{align}

\noindent{\bf Case I:} If $K_{2,-1}(\varphi)<0$, then by
$$\lim_{\lambda\to-\infty}\|\varphi^\lambda_{2,-1}\|_{\dot H^1}^2=0,$$ Proposition \ref{prop:tend0} and the continuity of $K_{2,-1}$ in
$\lambda$, there exists a negative number $\lambda_0<0$ such that
$K_{2,-1}(\varphi^{\lambda_0}_{2,-1})=  0$, and
\begin{align*}
K_{2,-1}(\varphi^{\lambda}_{2,-1})<   0, \;\; \forall\; \;
\lambda\in (\lambda_0, 0).
\end{align*}
By the definition of $m$, we obtain
$j(\lambda_0)=E(\varphi^{\lambda_0}_{2,-1}) \geq m$. Now by
integrating \eqref{diff J} over $[\lambda_0, 0]$, we have
\begin{align*}
\int^0_{\lambda_0} j''(\lambda)\; d\lambda \leq
\frac83\int^0_{\lambda_0} j'(\lambda)\; d\lambda.
\end{align*}
This yields  that
\begin{align*}
K_{2,-1}(\varphi)=j'(0)-j'(\lambda_0)\leq
\frac83\left(j(0)-j(\lambda_0)\right)  \leq -\frac83
\big(m-E(\varphi)\big),
\end{align*}
which implies \eqref{uniform:K:negative}.

Since $K_{2,-1}(\varphi)<0$, we have by Lemma \ref{lem:equi11}  \beq
m \leq H_{2,-1}(\varphi)< \frac14 \big\|\varphi\big\|^4_{L^4}. \eeq
where we have used the fact that $K_{2,-1}(\varphi)<0$ in the second
inequality. By the fact $m=\frac14 \left(C^*_4\right)^{-4}$ and the
Sharp Sobolev inequality, we have \beq \big\|\nabla
\varphi\big\|^4_{L^2} \geq (C^*_4)^{-4} \big\|\varphi\big\|^4_{L^4}
> (4m)^2, \eeq which implies that $\big\|\nabla
\varphi\big\|^2_{L^2} > 4m=\|\nabla W\|_{L_x^2}^2$. Thus, we
conclude \eqref{uniform:K:negative12}.

\noindent{\bf Case II:} $K_{2,-1}(\varphi) \geq 0$. We divide it
into two subcases:

When $4 K_{2,-1}(\varphi)\geq  \big\|\varphi\big\|^4_{L^4}$. Since
\begin{align*}
\frac43 \int_{\R^4}\big|\varphi\big|^4 \; dx =
-\frac43K_{2,-1}(\varphi) + \int_{\R^4} \left(\frac43 \big| \nabla
\varphi \big|^2 + \frac1{15} \big|  \varphi \big|^\frac{10}{3}
\right) \; dx,
\end{align*}
then we have
\begin{align*}
\frac{16}3 K_{2,-1}(\varphi)\geq -\frac43K_{2,-1}(\varphi) +
\int_{\R^4} \left(\frac43 \big| \nabla \varphi \big|^2 + \frac1{15}
\big| \varphi \big|^\frac{10}{3} \right) \; dx,
\end{align*}
which implies that
\begin{align*}
 K_{2,-1}(\varphi)\geq \frac{1}{5} \big\|\nabla
\varphi \big\|^2_{L^2} +  \frac1{100}
\big\|\varphi\big\|^\frac{10}{3}_{L^\frac{10}{3}}.
\end{align*}

When $4 K_{2,-1}(\varphi) \leq  \big\|\varphi\big\|^4_{L^4}$. By
\eqref{diff J}, we have for $\lambda=0$
\begin{align}
0< &  \frac{16}3j'(\lambda) <
\frac43e^{4\lambda}\big\|\varphi\big\|^4_{L^4}, \nonumber\\
j''(\lambda) = \frac83j'(\lambda) -& \frac23
e^{2\lambda}\big\|\nabla \varphi\big\|^2_{L^2} -
\frac43e^{4\lambda}\big\|\varphi\big\|^4_{L^4} \leq
-\frac83j'(\lambda). \label{evolution j}
\end{align}
By the continuity of $j'$ and $j''$ in $\lambda$, we know that $j'$
is an accelerating decreasing function as $\lambda$ increases until
$j'(\lambda_0)=0$ for some finite number $\lambda_0>0$ and
\eqref{evolution j} holds on $[0, \lambda_0]$.

By $ K_{2,-1}(\varphi^{\lambda_0}_{2,-1})=j'(\lambda_0)=0, $ we know
that
\begin{align*}
E(\varphi^{\lambda_0}_{2,-1})\geq m.
\end{align*}
Now integrating \eqref{evolution j} over $[0, \lambda_0]$, we obtain
that
\begin{align*}
-K_{2,-1}(\varphi)=j'(\lambda_0)-j'(0) \leq -\frac83
\big(j(\lambda_0)-j(0)\big) \leq -\frac83 (m-E(\varphi)),
\end{align*}
which implies \eqref{uniform:K:positive}.

On the other hand, we have by \eqref{equ:2.8}
\begin{align*}
\frac14\int_{\R^4}|\nabla \varphi|^2\;dx\leq
E(\varphi)<m=\frac14\int_{\R^4}|\nabla W|^2\;dx,
\end{align*}
and so \eqref{uniform:K:positive12} follows.
\end{proof}

As a consequence of Lemma \ref{uniform bound}, energy conservation,
continuous argument and Lemma \ref{independent K}, we deduce the
following lemma.

\begin{lem}\label{lem:keylemma}
Let $(\al,\beta)\in\Omega.$ Let $u:~I\times\R^4\to\C$ be the
solution to \eqref{001} with initial data $u(0,x)=u_0(x)\in
H^1(\R^4)$. Then, we have
\begin{enumerate}
\item If $u_0\in\mathcal{K}^-_{\al,\beta}$, then for each $t\in I$,
$u(t)\in\mathcal{K}^-_{\al,\beta}$  and
\begin{align}\label{uniform:K:negative12re}
\|\nabla u(t)\|_{L_x^2}>\|\nabla W\|_{L_x^2}.
\end{align}
\item If $u_0\in\mathcal{K}^+_{\al,\beta}$, then for each $t\in I$,
$u(t)\in\mathcal{K}^+_{\al,\beta}$ and
\begin{align}\label{uniform:K:positive12re}
\|\nabla u(t)\|_{L_x^2}<\|\nabla W\|_{L_x^2}.
\end{align}
\end{enumerate}

\end{lem}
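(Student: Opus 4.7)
The plan is to reduce to a single choice of parameters, then run a continuity/no-zero-crossing argument for the scaling derivative $K_{2,-1}$ along the flow, after which both gradient bounds come for free from Lemma~\ref{uniform bound}.

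First I would observe that $(\al,\beta)=(2,-1)\in\Omega$ (indeed $2\geq 0$ and $5\cdot 2+6\cdot(-1)=4\geq 0$), so Lemma~\ref{independent K} gives $\mathcal K^{\pm}_{\al,\beta}=\mathcal K^{\pm}_{2,-1}$ for every admissible $(\al,\beta)$. Thus it suffices to prove the statement with $K:=K_{2,-1}$. By the energy conservation law from the local well-posedness theorem, $E(u(t))=E(u_0)$ on the maximal lifespan $I$, so $m-E(u(t))=\eta$ is a strictly positive constant. Since $u\in C(I;H^1(\R^4))$ and $K:H^1(\R^4)\to\R$ is continuous, the scalar function $t\mapsto K(u(t))$ is continuous on $I$.

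The key lemma I would isolate is a \emph{no-zero-crossing} statement: if $u_0\not\equiv 0$, then $K(u(t))\neq 0$ for every $t\in I$. Indeed, suppose $K(u(t_*))=0$ for some $t_*\in I$. By mass conservation $\|u(t_*)\|_{L^2}=\|u_0\|_{L^2}>0$, so $u(t_*)\in H^1\setminus\{0\}$, and then the definition
$$m=\inf\bigl\{E(\vp):\vp\in H^1(\R^4)\setminus\{0\},\ K(\vp)=0\bigr\}$$
would force $E(u(t_*))\geq m$, contradicting $E(u(t_*))=E(u_0)<m$. (The trivial case $u_0\equiv 0\in\mathcal K^+$ is handled separately, giving $u\equiv 0$ with all conclusions vacuous.)

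With this in hand the two cases are symmetric. If $u_0\in\mathcal K^-$, then $u_0\neq 0$ and $K(u_0)<0$; by the no-zero-crossing property and continuity of $t\mapsto K(u(t))$, the sign is preserved, so $K(u(t))<0$ and $E(u(t))<m$ for every $t\in I$, i.e.\ $u(t)\in\mathcal K^-$, and then \eqref{uniform:K:negative12re} is exactly \eqref{uniform:K:negative12} applied pointwise in $t$. If $u_0\in\mathcal K^+$ is nontrivial, the no-zero-crossing argument forces $K(u_0)>0$ strictly, and the same continuity reasoning yields $K(u(t))>0$ on $I$; combined with energy conservation this puts $u(t)\in\mathcal K^+$, and \eqref{uniform:K:positive12re} follows from \eqref{uniform:K:positive12}.

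The hard part here is conceptual rather than analytic: recognising that the variational minimality defining $m$ together with $L^2$-mass conservation prevents $K_{2,-1}(u(t))$ from crossing zero. Once this is noted, the full strength of Lemmas~\ref{independent K} and~\ref{uniform bound} does all the remaining work, and no new estimates are needed.
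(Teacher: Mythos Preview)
Your proposal is correct and follows essentially the same approach as the paper. The paper gives only a one-line sketch invoking Lemma~\ref{uniform bound}, energy conservation, a continuity argument, and Lemma~\ref{independent K}; your write-up simply makes explicit the no-zero-crossing step (using mass conservation to guarantee $u(t_*)\neq0$) that the paper leaves implicit.
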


\begin{rem}\label{rem:positi}
By sharp Sobolev embedding:
$\|\varphi\|_{L_x^4}^4\leq\frac{\|\nabla\varphi\|_{L_x^2}^4}{\|\nabla
W\|_{L_x^2}^4}$, we know that if $\|\nabla\vp\|_{L_x^2}<\|\nabla
W\|_{L_x^2}$, then
$$E_c(\vp)=\frac12\int_{\R^4}|\nabla\vp|^2\;dx-\frac14\int_{\R^4}|\vp|^4\;dx>\frac14\int_{\R^4}|\nabla\vp|^2>0.$$
\end{rem}

\section{Blow up of $\mathcal{K}^-$}
In this section, we will prove  that the radial solution of \eqref{001} in
$\mathcal{K}^-$ blows up at finite time.

Let $\phi$ be a smooth, radial function satisfying $|\pa^2_r\phi(r)|\leq 2$, $\phi(r)=r^2 $ for $r\leq 1$, and $\phi(r)=0$ for $r\geq3$.
For $R\geq1 $, we define $$\phi_R(x)=R^2\phi(\frac{|x|}{R})\;\text{ and }\; V_R(x)=\int_{\R^4} \phi_R(x)|u(t,x)|^2dx.$$
Let $u(t,x)$ be a radial solution to \eqref{001}, then by a direct computation,  we have
\beq
\partial_t V_R(x)=2 {\rm Im} \int_{\R^4}[\overline{u}\pa_j u](t,x)\pa_j\phi_R(x) dx,
\eeq and
\begin{align*}
\pa^2_t V_R(x)= & 4{\rm Re} \int_{\R^4}\pa_j\pa_k\phi_R(x)\pa_k\overline{u}(t,x)\pa_j u(t,x) dx \\
                & -\int_{\R^4}\left[ \Delta^2\phi_R |u|^2+\Delta\phi_R(|u|^4-\frac{4}{5}|u|^\frac{10}{3})\right] dx\\
=& 4\int_{\R^4} \phi_R''|\nabla u|^2dx-\int_{\R^4}\left[
\Delta^2\phi_R |u(t,x)|^2
 +\Delta\phi_R(x)(|u|^4-\frac{4}{5}|u|^\frac{10}{3})(t,x)\right]
 dx\\
\leq & 4\int_{\R^4} \left[2|\nabla u|^2-2|u|^4 +\frac{8}{5} |u|^{\frac{10}{3}} \right](t,x)dx\\
                   &+\frac{C}{R^2} \int_{R\leq|x|\leq 3R} |u|^2dx+C\int_{R\leq|x|\leq 3R} \left[|u|^4+|u|^\frac{10}{3}
                   \right]dx.
\end{align*}
By the radial Sobolev inequality, we have
\begin{align*}
\|f\|_{L^\infty(|x|\geq
R)}\leq&\frac{c}{R^\frac32}\|f\|_{L^2_x(|x|\geq R)}^\frac12\|\nabla
f\|_{L^2_x(|x|\geq R)}^\frac12.
\end{align*}
Therefore, by mass conservation and Young's inequality, we know that
for any $\epsilon>0$ there exist sufficiently large $R$ such that
\begin{align*}
\pa_t^2V(t)\leq&8K_{2,-1}(u(t))+\epsilon \big\|\nabla
u(t,x)\big\|^2_{L^2 }
+ \epsilon^2\\
=&32E(u(t))-(8-\epsilon)\|\nabla u(t,x)\big\|^2_{L^2
}-\frac{16}5\|u\|_{L_x^\frac{10}3}^\frac{10}3+ \epsilon^2.
\end{align*}
This together with Lemma \ref{lem:keylemma} and $E(u)<(1-\delta) m$
for some $\delta>0$ implies  that \beq \pa_t^2V_R(t)\leq -32 \delta
m+ C\ep\|\nabla W\|^2_{L^2}+C\ep^2. \eeq Finally, if we choose $\ep$
sufficient small, we can obtain $\pa_t^2V_R(t)\leq -16\delta_2 m$,
which implies that $u$ blows up in finite time.

\section{Profile decomposition}
In this section, we give the profile decomposition of \eqref{001} by
the strategy in
 \cite{AKIN-SM-2013, Nakanishi-2010,Kerrani2001,  MXZ-1,MXZ-2}. First, we give some notations for later use.

For $j,n\in \mathbb{N}$, we denote  $\Tnj$ by  $ \Tnj \vp(x)=\frac{1}{h^j_n}\vp(\frac{x-x^j_n}{h^j_n}) $, where $\vp\in H^1(\R^4)$
and $(x_n^j,h_n^j)\in\R^4\times (0,1]$.   We define $\tau^j_n=\frac{t^j_n}{(h^j_n)^2}$ and  the  multiplier
\beq
\snj=\frac{\Jap{(\hnj)^{-1} \nabla}^{-1} \Japnb  } {\hnj}.
\eeq

\subsection{Linear profile decomposition}
Now we give the linear profile decomposition for the solutions
sequence  to free Schr\"odinger equation in the inhomogeneous space
$H^1(\R^4)$.

\begin{prop}[Linear profile decomposition]\label{prop:profile}
Let $\vn(t,x)=e^{it\Delta}\vpn(x)$ be a sequence of the free
Schr\"odinger solutions with bounded $H^1(\R^4)$ norm. Then up to a
subsequence, there exist $K\in \set{1,2,\cdots,\infty}$,
$\set{\vp^j}_{j\in[1,K)}\subset H^1(\R^4)$ and
$\set{t_n^j,x_n^j,h_n^j}_{n\in \mathbb{N}}\subset \R\times\R^4\times
(0,1]$ satisfying

\beq\label{lpd}
\vn(t,x)=\sum_{j=1}^{k}\vn^j(t,x)+  e^{it\Delta} \omega^k_n,\; \text{for any }\; k\in [1,K),
\eeq
where
\beq
\begin{split}\label{lp-1}
     \vnj(t,x) &=    \Japnb^{-1}|\nabla|  e^{it \Delta}  \Tnj  e^{-i\taunj \Delta}   \Japnb|\nabla|^{-1}   \vp^j                 \\
               &=e^{it \Delta} \Tnj \snj e^{-i\taunj\Delta}\vp^j\\
               &=e^{i(t -t^j_n)\Delta} \Tnj \snj \vp^j
    \end{split}
\eeq
and $\set{\omega^k_n : k\in [1,K),\; n\geq 1 }\subset H^1(\R^4) $.
The error terms $\omega^k_n$ of the linear profile decomposition \eqref{lpd} have the properties:
\beq\label{lp-er}
\lim\limits_{k\ra K}\lim\limits_{n\ra \infty}\||\nabla|^{-1}\Japnb e^{it\Delta}\omega_n^k\|_{L_t^q(\R; L^r(\R^4))}=0, \text{ for any}\;
 \dot{H}^1 \text{-admissible pair}\;( q,r ),
\eeq and for any $j\leq k\in[1,K)$. \beq\label{lp-weak}
\lim\limits_{n\ra \infty} e^{i\taunj\Delta} (\Tnj)^{-1}
|\nabla|^{-1}\Japnb \omega_n^k = 0  \;\;\; \text{weakly in }\;\;
\dot{H}^1(\R^4). \eeq

For any $l<j\leq K,$ and $k\in [1,K)$, we have \beq\label{de-index}
             \lim\limits_{n\ra\infty} \left(\left|\log\frac{h^l_n}{h^j_n}\right|+\left|\frac{t_n^j-t_n^l}{(h^l_n)^2}\right|
             + \left|\frac{x_n^j-x_n^l}{h_n^l}\right|\right)=\infty,
 \eeq
\beq\label{de-mass}
 \lim\limits_{n\ra +\infty}\left| M(v_n(0))-\sum^{k}_{j=1} M(v^j_n(0))- M(\omega^k_n) \rg|=0, 
\eeq
\beq\label{de-energy}
\lim\limits_{n\ra +\infty}\left| E(v_n(0))-\sum^{k}_{j=1} E(v^j_n(0))- E(\omega^k_n) \rg|=0,
\eeq
and
\beq\label{de-K}
 \lim\limits_{n\ra +\infty}\left| K_{2,-1}(v_n(0))-\sum^{k}_{j=1} K_{2,-1}(v^j_n(0))- K_{2,-1}(\omega^k_n) \rg|=0.
\eeq Moreover, for each fixed $j$, the sequence $\set{h^j_n}_{n\in
N}$ is either going to $0$ or identically $1$ for all $n$.
\end{prop}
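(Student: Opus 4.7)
The plan is to adapt Keraani's iterative extraction to the inhomogeneous space $H^1(\mathbb{R}^4)$, keeping track of the two natural concentration scales $h_n^j\to 0$ (energy-critical bubbles) and $h_n^j\equiv 1$ ($L^2$-type profiles). The starting observation is that $f_n:=|\nabla|\langle\nabla\rangle^{-1}\varphi_n$ is bounded in $\dot{H}^1(\mathbb{R}^4)$ by $\|\varphi_n\|_{H^1}$, so the $\dot{H}^1$-critical theory applies directly to $f_n$. The conjugation by $\langle\nabla\rangle^{-1}|\nabla|$ in the definition of $v_n^j$, equivalently encoded in the multiplier $\sigma_n^j$, is the device that transports an extracted $\dot{H}^1$-profile back to a genuine $H^1$-profile while respecting the rescaling $T_n^j$. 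The restriction $h_n^j\in(0,1]$ will be enforced by separating the intrinsic frequency scale of each bubble into a low regime (normalized to $h_n^j\equiv 1$) and a high regime (giving $h_n^j\to 0$).

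The extraction proceeds inductively via an inverse Strichartz lemma: if $\limsup_n \|e^{it\Delta}g_n\|_{L_t^qL_x^r}$ stays above a positive constant for some $\dot{H}^1$-admissible pair $(q,r)$ applied to a bounded $\dot{H}^1$ sequence $g_n$, then after extraction there exist parameters $(t_n,x_n,h_n)\in\mathbb{R}\times\mathbb{R}^4\times(0,1]$ and a nonzero $\varphi\in\dot{H}^1$ that is a weak limit of a suitably rescaled/translated/time-shifted version of $g_n$. Starting with $g_n=f_n$ and iterating, Pythagorean decoupling in $\dot{H}^1$ is preserved at each step, so the $\dot{H}^1$ norms of the successive remainders strictly decrease, forcing $\|e^{it\Delta}\omega_n^k\|_{L_t^qL_x^r}\to 0$ as $k\to K$ in every $\dot{H}^1$-admissible norm. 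This yields \eqref{lp-er}, while the weak-limit property \eqref{lp-weak} is built into the construction of each profile.

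Parameter orthogonality \eqref{de-index} is obtained by contradiction: if, for some $l<j$, the three quantities $\log(h_n^l/h_n^j)$, $(t_n^j-t_n^l)/(h_n^l)^2$, and $(x_n^j-x_n^l)/h_n^l$ all stayed bounded, one could conjugate the $j$-th extraction back to the frame of the $l$-th profile, pass to the weak limit, and derive that $\varphi^j$ is a nonzero translate-phase-rescale of the residual from which it was extracted---contradicting weak vanishing at the $l$-th step. With \eqref{de-index} in hand, the decouplings \eqref{de-mass}--\eqref{de-K} follow by expanding the relevant polynomials of the sum \eqref{lpd} and showing cross-terms vanish, via a standard density argument using smooth compactly supported approximants of the $\varphi^j$; the fractional Leibniz and chain rules (Lemmas 2.1--2.2) handle the $L^4$ and $L^{10/3}$ pieces of the energy, and the sign of $\varphi$ plays no role.

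The hard part will be controlling the two regimes $h_n^j\to 0$ and $h_n^j\equiv 1$ uniformly. When $h_n^j\to 0$ the $L^2$-mass and $L^{10/3}$-norm of $v_n^j$ vanish at explicit polynomial rates in $h_n^j$, so energy-critical bubbles contribute asymptotically only to the $\dot{H}^1$ and $L^4$ quantities, whereas fixed-scale profiles enter all four terms of $E$ and of $K_{2,-1}$. One must therefore track carefully how $\sigma_n^j$ acts on low vs.\ high frequencies---this is precisely why the weight $\langle(h_n^j)^{-1}\nabla\rangle^{-1}\langle\nabla\rangle/h_n^j$ appears---so that the Pythagorean structure passes from $\dot{H}^1$ back to $H^1$, and so that cross-integrals between profiles of widely different scales vanish in the limit. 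The $K_{2,-1}$ decoupling \eqref{de-K} is the most delicate, since it combines three different homogeneities and must be proved without relying on sign conditions on individual profiles.
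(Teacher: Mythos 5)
Your overall plan coincides with the route the paper actually takes: the paper's proof of this proposition is a citation to the $H^1$ profile decomposition of Akahori--Ibrahim--Kikuchi--Nawa (their Lemma 2.2), which is exactly the conjugated Keraani scheme you sketch --- run the $\dot H^1(\R^4)$ inverse-Strichartz extraction and transport profiles and remainder back to $H^1$ through the multiplier encoded in $\sigma_n^j$ --- with the weak-vanishing property \eqref{lp-weak} quoted from Killip--Visan (their Lemma 2.10). So in substance you are reconstructing the cited argument rather than proposing a different one, and the outline of the orthogonality and decoupling steps \eqref{de-index}--\eqref{de-K} is standard and fine at this level of detail.

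One step, as literally written, is backwards and would fail: you propose to apply the $\dot H^1$ theory to $f_n=|\nabla|\langle\nabla\rangle^{-1}\varphi_n$. The correct lift is the opposite composition $\tilde\varphi_n=|\nabla|^{-1}\langle\nabla\rangle\varphi_n$, which satisfies $\|\tilde\varphi_n\|_{\dot H^1}=\|\varphi_n\|_{H^1}$ exactly; one decomposes $\tilde\varphi_n$ by Keraani and then applies the isometric return map $\langle\nabla\rangle^{-1}|\nabla|:\dot H^1\to H^1$, which after commuting with $T_n^j$ produces precisely $\sigma_n^j$ in \eqref{lp-1}, and the Keraani remainder is exactly the quantity $|\nabla|^{-1}\langle\nabla\rangle e^{it\Delta}\omega_n^k$ appearing in \eqref{lp-er}. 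With your choice of $f_n$, recovering $\varphi_n$ requires $\langle\nabla\rangle|\nabla|^{-1}$, which is unbounded on $\dot H^1$ at low frequencies, so neither the Strichartz smallness of the remainder nor the $\dot H^1$ Pythagorean expansion transfers back to the $H^1$ statement; moreover $\|f_n\|_{\dot H^1}$ can be far smaller than $\|\varphi_n\|_{H^1}$, so profiles can be lost in the reduction. This is a local fix (your own sentence about transporting extracted profiles back via $\langle\nabla\rangle^{-1}|\nabla|$ is the right mechanism), but the first reduction must be stated with the operators the other way around. Two smaller points: the restriction $h_n^j\in(0,1]$, i.e.\ the exclusion of scales tending to infinity, is exactly where the $L^2$ part of the $H^1$ bound is used, so it needs more than the one-line normalization you give; and for \eqref{lp-weak} the paper simply invokes Killip--Visan, consistent with your remark that it is built into the extraction.
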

Note that, by $k\in [1,K)$ we mean that $1\leq k\leq K$ if $K<+\infty$ and $1\leq k<\infty$ if $K=\infty$.
\begin{proof}
  The proof is given by \cite[Lemma 2.2]{AKIN-SM-2013} based on \cite[Theroem 1.6]{Kerrani2001}, where the linear profile
  decomposition dealt with the sequences  in $\dot{H}^1$. And the property \eqref{lp-weak} is from \cite[lemma 2.10]{K-V-AJM2010}.

\end{proof}

We also have the following property for  a free Schr\"odinger solution  sequence if it is in $\mathcal{K}^+$.
\begin{cor}\label{cor:profile}
Suppose that $v_n(t,x)$ is a sequence of the free Schr\"odinger
solution with bounded $H^1(\R^4)$
 norm  satisfying that $$v_n(0)\in \mathcal{K}^+ \text{ and } E(v_n(0))\leq m_0< m.$$  
Let $$v_n(t,x)=\sum_{j=1}^{k} v_n^j(t,x)+ e^{it\Delta}\omega^k_n$$
 be the linear profile decomposition given by  Proposition 4.1. Then for sufficiently  large n and all $j,k\in[1,K)$, we have
$v^j_n(0),\; w^k_n(x)\in \mathcal{K}^+.$
  Moreover for all $j<K$, we have
  \beq \label{lpd-cor}
  0\leq \varliminf_{n\ra\infty} E(v^j_n(0))\leq \varlimsup_{n\ra\infty} E(v^j_n(0))\leq \varliminf_{n\ra\infty} E(v_n(0)) ,
  \eeq
  where the last inequality becomes equality  only if $K=1$ and $\omega^1_n(x)\ra 0$ in $\dot{H}^1$ as $n\ra \infty$.
\end{cor}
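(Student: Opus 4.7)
\textbf{Plan for Corollary \ref{cor:profile}.} The strategy is to transfer the $\mathcal K^+$--condition from $v_n(0)$ onto every profile $v_n^j(0)$ and every remainder $\omega_n^k$ using the $\dot H^1$--orthogonality of the decomposition together with the sharp Sobolev inequality, and then read off the energy ordering directly from the Pythagorean identity \eqref{de-energy}.

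First I would invoke the $\dot H^1$--orthogonality intrinsic to the Keraani--type decomposition,
\begin{equation*}
\|\nabla v_n(0)\|_{L^2}^2 = \sum_{j=1}^{k}\|\nabla v_n^j(0)\|_{L^2}^2 + \|\nabla \omega_n^k\|_{L^2}^2 + o_n(1),
\end{equation*}
together with Proposition \ref{prop:u-in-K}, which supplies $\|\nabla v_n(0)\|_{L^2} < \|\nabla W\|_{L^2}$. Consequently, for each fixed $j,k$ and $n$ sufficiently large, both $\|\nabla v_n^j(0)\|_{L^2}$ and $\|\nabla \omega_n^k\|_{L^2}$ are strictly smaller than $\|\nabla W\|_{L^2}$. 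The sharp Sobolev identity $\|W\|_{L^4}^4 = \|\nabla W\|_{L^2}^2$ then forces $\|\varphi\|_{L^4}^4 < \|\nabla\varphi\|_{L^2}^2$ for any such $\varphi$, and since the $L^{10/3}$--contribution to $K_{2,-1}$ has a positive sign, I obtain $K_{2,-1}(v_n^j(0))> 0$ and $K_{2,-1}(\omega_n^k)> 0$ eventually. Lemma \ref{lem:kgeq0} now upgrades this to $E(v_n^j(0)) \geq \tfrac14\|\nabla v_n^j(0)\|_{L^2}^2 \geq 0$ and analogously for $\omega_n^k$. Feeding these non--negativities into \eqref{de-energy} together with $E(v_n(0))\leq m_0<m$ yields $E(v_n^j(0)), E(\omega_n^k)<m$ for large $n$, placing each piece in $\mathcal K^+$ as claimed.

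The energy ordering \eqref{lpd-cor} is then immediate from the same identity: the lower bound $0\leq\varliminf E(v_n^j(0))$ was just established, while isolating the $j$--th summand in \eqref{de-energy} and discarding the nonnegative remaining terms gives $\varlimsup_n E(v_n^j(0))\leq\varliminf_n E(v_n(0))$. For the equality case, saturation of this upper bound forces $\lim_n E(v_n^l(0))=0$ for every $l\neq j$ and $\lim_n E(\omega_n^k)=0$; the bound $E\geq\tfrac14\|\nabla\cdot\|_{L^2}^2$ available on each piece then yields $\|\nabla v_n^l(0)\|_{L^2}\to 0$ and $\|\nabla\omega_n^k\|_{L^2}\to 0$. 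Since every profile is nontrivial by the construction of the decomposition, the vanishing in $\dot H^1$ of $v_n^l(0)$ for each $l\neq j$ rules out the existence of such $l$, forcing $K=1$, and the second vanishing is exactly the stated conclusion $\omega_n^1\to 0$ in $\dot H^1$.

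The one technical point requiring care is the $\dot H^1$--Pythagorean identity in the nonhomogeneous form \eqref{lpd}: when $h_n^j\to 0$, the multiplier $\sigma_n^j$ is not the identity, but it acts as a Fourier multiplier of norm $1+o_n(1)$ on the spectral range relevant to a fixed $\varphi^j$, so the standard orthogonality argument (as in \cite{Kerrani2001,K-V-AJM2010}) transfers without change, and the reverse implication that $\|\nabla v_n^l(0)\|_{L^2}\to 0$ forces $\varphi^l=0$ is valid. Apart from this routine verification, the proof is a direct consequence of the variational lemmas of Section 2 and the Pythagorean identity \eqref{de-energy}.
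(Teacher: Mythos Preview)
Your proof is correct and follows essentially the same route as the paper: bound $\|\nabla v_n(0)\|_{L^2}$ below $\|\nabla W\|_{L^2}$ via the $\mathcal K^+$ hypothesis, push this bound onto each $v_n^j(0)$ and $\omega_n^k$ by $\dot H^1$--orthogonality, deduce nonnegative energy for every piece, and then read off both $\mathcal K^+$--membership and \eqref{lpd-cor} from the energy decoupling \eqref{de-energy}. Your presentation is in fact more explicit than the paper's in two respects---you pass through $K_{2,-1}>0$ and Lemma~\ref{lem:kgeq0} rather than citing positivity of energy directly, and you spell out the equality case, which the paper leaves to the reader---but the underlying argument is the same.
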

\begin{proof}
Since $v_n(0)\in \mathcal{K}^+$ and  $E(v_n(0))\leq m_0$, by Lemma \ref{lem:keylemma}, we have
$$\varlimsup_{n\ra\infty}\norm{\nabla v_n(0) }_{L^2(\R^4)} < \norm{\nabla W}_{L^2(\R^4)}.$$
And from the linear profile decomposition, up to a subsequence, we
have \beq\label{vo-bounded}
\lim_{n\ra\infty}\left(\|\vnj(0)\|_{H^1(\R^4)}^2
+\|\omega^k_n\|_{H^1(\R^4)}^2\right) \leq \lim_{n\ra\infty}
\|\vn(0)\|_{H^1(\R^4)}^2< \infty, \eeq
 and
$\|\vnj(0)\|_{\dot{H}^1(\R^4)}^2 +\|\omega^k_n\|_{\dot{H}^1(\R^4)}^2 <\| W\|_{\dot{H}^1(\R^4)}^2 $
for large $n$ and any $j<k\in [1,K)$. 

Then also by Lemma \ref{lem:keylemma}, we have $\vnj(0)$ and $\omega^k_n$
 positive energy, which together with \eqref{de-energy} implies \eqref{lpd-cor}.
 Thus, $v^j_n(0),\; w^k_n\in \mathcal{K}^+$ follows from the definition of $\K^+$.
\end{proof}

\subsection{Nonlinear Profile decomposition}

Now we are ready  to give the construction of  the nonlinear profile
decomposition. The strategy here is the same as the $3$D case in
\cite{MXZ-1} and higher dimensional case in \cite{AKIN-SM-2013,MXZ-2}.

Let $v_n(t,x)$ and $u_n(t,x)$ be  solutions to the  free
Schr\"odinger equation and the \eqref{001} respectively. And assume
that $v_n(t,x)$ and $u_n(t,x)$ have the same initial datum
$\vp_n(x)$.  By Proposition \ref{prop:profile}, we have the linear
profile decomposition \beq\label{lpd1}
\begin{split}
  v_n(t,x) = & \sum_{j=1}^{k} v^j_n(t,x)+ e^{it\Delta} \omega^k_n\\
       = &\sum_{j=1}^{k} e^{i(t -t^j_n)\Delta} \Tnj \snj \vp^j + e^{it\Delta} \omega^k_n .
\end{split}
\eeq
Let   $\unj(t,x)$ be the solution to \eqref{001} with the initial data $\unj(0,x)=\vnj(0,x)$.
  If we define $\Unj(t,x)$ by $\unj(t,x)=\Tnj  \snj \Unj\left(\frac{t-\tnj}{(\hnj)^2} \right)$, then $\Unj$ satisfies
\beq\label{}
\left\{
    \begin{array}{ll}
       (i\pa_t+\Delta) U^j_n  & = -(\snj)^{-1} f_1( \snj \Unj ) +    (\hnj)^{\frac{2}{3}} (\snj)^{-1} f_2(\snj \Unj )  \\
         \;\;\;\;\;U^j_n(-\taunj)  & = e^{-i\taunj\Delta}\vp^j
     \end{array}
\right. \eeq with $f_1(u)=|u|^2u$ and $f_2(u)=|u|^\frac{10}3u.$

Next, let $\{\Uij\}_{j\geq 1}$ be the solutions of the limit equation of above equation. More precisely, for each $j$,
we denote the limit of $\taunj$ and $\snj$ by
$$\tij=\lim_{n\ra \infty}\taunj\in [-\infty,+\infty] ,$$  and
$$\sij=\left\{
        \begin{array}{ll}
          1, & \hbox{\;\;\;$\hnj\equiv1$;} \\
          \frac{\Japnb}{|\nabla|}, & \hbox{$\hnj\ra 0$ as $n\ra\infty$.}
        \end{array}
      \right.
 $$
Then $\Uij$ satisfies

\beq\label{U;h=1}
\left\{
    \begin{array}{ll}
       (i\pa_t+\Delta) \Uij & = - f_1(\Uij ) +  f_2( \Uij )  \\
          \;\;\;\;\;\Uij(-\tij)  & = e^{-i\tij\Delta}\vp^j,
     \end{array}
\right.
\eeq
 when $\hnj\equiv1$, or
\beq\label{U;h=0}
\left\{
    \begin{array}{ll}
       (i\pa_t+\Delta) \Uij  & = -(\sij)^{-1} f_1( \sij \Uij )  \\
         \;\;\;\;\;\;\Uij(-\tij)  & = e^{-i\tij\Delta}\vp^j,
     \end{array}
\right.
\eeq
 when $\hnj\ra 0$ as $n\ra\infty$.
When  $\tij=\pm\infty$, we consider  \eqref{U;h=1} and
\eqref{U;h=0} as the final value problems, which means that
$\Uij(t,x)$ satisfies \beq\label{Uij=vp} \lim\limits_{n\ra\infty}
\norm{\Uij(-\taunj)-e^{-i\taunj\Delta} \vp^j}_{H^1(\R^4)}=0. \eeq

Based on this, we define the  nonlinear profile by
$\ujn(t,x)= \Tnj  \snj \Uij\left(\frac{t-\tnj}{(\hnj)^2} \right)$, which satisfies

\beq\label{u;h=1}
\left\{
    \begin{array}{ll}
      \;\;\; (i\pa_t+\Delta) \ujn & = - f_1(\ujn ) +f_2(\ujn )  \\
        \ujn\left(\tnj-\tau^j_\infty\right)  & =  e^{-i\tij\Delta}\vp^j,
     \end{array}
\right.
\eeq
 when $\hnj\equiv1$, or
\beq\label{u;h=0}
\left\{
    \begin{array}{ll}
        \;\;\; \;\;\;  (i\pa_t+\Delta) \ujn  & = -\frac{|\nabla|}{\Japnb} f_1\left(\frac{\Japnb}{|\nabla|}\ujn \right)  \\
         \ujn\lf(\tnj-\tau^j_\infty(\hnj)^2\rg)  & = e^{-i\tij\Delta}\vp^j,
     \end{array}
\right.
\eeq
 when $\hnj\ra 0$ as $n\ra\infty$. Then we define the nonlinear profile decomposition of $u_n(t,x)$ by
\beq\label{nlp}
\tilde{u}^{ k}_n:=\sum\limits_{j=1}^k \ujn+e^{it \Delta} \omega^k_n.
\eeq

Next we will show that each nonlinear profile such that  $\ujn\in \K^+,$
 if $u_n(0)\in \K^+$ for each $n$.

\begin{lem}\label{lem:nonprofi}
  Suppose $u_n(t,x)$ is a solution sequence of the \eqref{001} with the
  initial datum $u_n(0)$, which is  bounded in $H^1(\R^4)$ with $u_n(0)\in \K^+$.
  Then we have the nonlinear decomposition given by $$\tilde{u}^{ k}_n=\sum\limits_{j=1}^k \ujn+e^{it \Delta}
   \omega^k_n.$$
   Let $I^j$ be the maximal lifespan of $\Uij$, Then for $t\in I^j$,  we have that
  \beq \label{u-in-K}
   \sij \Uij(t)\in\left\{
                 \begin{array}{ll}
                   \K^+, & \hbox{$\hnj\equiv1$;} \\
                   \K^+_c, & \hbox{$\hnj\ra0 \text{ as } n\ra\infty$,}
                 \end{array}
               \right.
  \eeq
  where $\K^+_c$ is defined by
  $$\K^+_c:=\big\{\varphi\in\dot{H}^1(\R^4):~E_c(\vp)<E_c(W),~\|\nabla\vp\|_{L_x^2}^2\geq\|\vp\|_{L_x^4}^4\big\}.$$
\end{lem}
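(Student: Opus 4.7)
The plan is a two-step argument: (i) verify that $\sij \Uij$ lies in the appropriate region ($\K^+$ or $\K^+_c$) at one convenient time in its maximal lifespan $I^j$; (ii) propagate this membership to all of $I^j$ via energy-trapping --- Lemma \ref{lem:keylemma} applied to \eqref{001} in the first case, and the Kenig--Merle analogue for the focusing energy-critical NLS in the second (which can be deduced by restricting the variational analysis of Section 2 to the functional $E_c$, essentially the content of Lemma \ref{lem:equivl} and \eqref{m=mc}).

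For the initial-time verification, I would start from Corollary \ref{cor:profile}, which yields $\vnj(0)\in\K^+$ for large $n$ with $\limsup_{n\to\infty} E(\vnj(0))<m$. Writing $\vnj(0)=\Tnj g_n^j$, where $g_n^j:=e^{-i\taunj\Delta}\snj\vp^j$, I exploit that $\Tnj$ preserves the $\dot H^1$- and $L^4$-critical norms but rescales $\|\cdot\|_{L^{10/3}}^{10/3}$ by a factor $(\hnj)^{2/3}$; in particular $E_c(\vnj(0))=E_c(g_n^j)$ and $K^c_{2,-1}(\vnj(0))=K^c_{2,-1}(g_n^j)$. When $\hnj\equiv 1$ we have $\snj=\sij=1$ and $g_n^j\to \Uij(-\tij)$ in $H^1$, so the limits of $E(\vnj(0))$ and $K_{2,-1}(\vnj(0))$ place $\Uij(-\tij)\in\K^+$. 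When $\hnj\to 0$, combining $E_c(\vnj(0))\leq E(\vnj(0))$ with the decomposition $K_{2,-1}(\vnj(0))=K^c_{2,-1}(\vnj(0))+\tfrac{4}{5}\|\vnj(0)\|_{L^{10/3}}^{10/3}\geq 0$, together with $\snj\to\sij$ in the Fourier-multiplier sense on $\dot H^1$ (hence on $L^4$ by Sobolev) and continuity of the free flow, yields $E_c(V^j_\infty(-\tij))<m=E_c(W)$ and $K^c_{2,-1}(V^j_\infty(-\tij))\geq 0$, where $V^j_\infty:=\sij \Uij$ satisfies $(i\pa_t+\Delta)V^j_\infty=-|V^j_\infty|^2 V^j_\infty$; hence $V^j_\infty(-\tij)\in\K^+_c$. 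The subcases $\tij=\pm\infty$ are handled by selecting a finite surrogate time $-\taunj$ for $n$ large, transferring the $\K^+$/$\K^+_c$ properties from $e^{-i\taunj\Delta}\vp^j$ to $\Uij(-\taunj)$ via the $H^1$-closeness supplied by \eqref{Uij=vp}, and using that the defining strict inequalities are open.

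The main technical obstacle is the case $\hnj\to 0$, where one must correctly transfer information from the subcritical functional $E$ (controlling $\vnj(0)$) to the critical functional $E_c$ (controlling $V^j_\infty$). The key point is that the $L^{10/3}$ contributions in both $E$ and $K_{2,-1}$ carry favorable signs, so their presence in the prelimit only strengthens the strict inequalities needed for membership in $\K^+_c$ after passing to the limit; no delicate cancellation is required. Once the initial-time step is in hand, propagation in both cases is immediate from the corresponding energy-trapping statement, and Lemma \ref{independent K} ensures the conclusion is independent of the choice $(\al,\beta)\in\Omega$.
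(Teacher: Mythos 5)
Your overall architecture (membership at one time in $I^j$, then propagation by energy trapping --- Lemma \ref{lem:keylemma} when $\hnj\equiv1$, the Kenig--Merle trapping for the energy-critical flow when $\hnj\to0$) is the same as the paper's, and the $\hnj\equiv1$ case and the treatment of $E_c$ are fine. But there is a genuine gap in your initial-time verification for the case $\hnj\to0$: you claim that $K_{2,-1}(\vnj(0))=K^c_{2,-1}(\vnj(0))+\tfrac45\|\vnj(0)\|_{L^{10/3}}^{10/3}\geq0$ ``yields'' $K^c_{2,-1}(\sij\Uij(-\tij))\geq0$, and you justify this by saying the $L^{10/3}$ contribution carries a favorable sign. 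The sign is in fact unfavorable for this step: since the $L^{10/3}$ term enters $K_{2,-1}$ with a plus sign, one has $K^c_{2,-1}\leq K_{2,-1}$, so $K_{2,-1}\geq0$ only gives $K^c_{2,-1}\geq-\tfrac45\|\vnj(0)\|_{L^{10/3}}^{10/3}$, which does not pass to the nonnegativity you need unless you also prove that $\|\vnj(0)\|_{L^{10/3}}\to0$ along the concentrating profiles. You never establish that vanishing (it is true, but it requires an extra argument, e.g. $\|\vnj(0)\|_{L^2}=\hnj\norm{\snj\vp^j}_{L^2}\to0$ by dominated convergence on the multiplier symbol of $\snj$, then interpolation with the bounded $L^4$ norm), so as written the constraint defining $\K^+_c$ is not obtained.

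The paper avoids this issue entirely: from $\vnj(0)\in\K^+$ with $E(\vnj(0))\leq m_0<m$, Lemma \ref{lem:kgeq0} and Lemma \ref{lem:keylemma} give the quantitative kinetic-energy bound $\|\nabla \vnj(0)\|_{L^2}^2\leq 4m_0<\|\nabla W\|_{L^2}^2$, which is invariant under $\Tnj$ and passes to the limit to give $\|\sij\Uij(-\tij)\|_{\dot H^1}<\|W\|_{\dot H^1}$; the sharp Sobolev inequality then yields $\|\sij\Uij(-\tij)\|_{L^4}^4\leq\|\nabla\sij\Uij(-\tij)\|_{L^2}^2\cdot\|\nabla\sij\Uij(-\tij)\|_{L^2}^2/\|\nabla W\|_{L^2}^2<\|\nabla\sij\Uij(-\tij)\|_{L^2}^2$, i.e. the $\K^+_c$ constraint, with $E_c<E_c(W)$ obtained exactly as you do. You should replace your $K$-decomposition step by this kinetic-energy/sharp-Sobolev route (or supply the missing vanishing of the $L^{10/3}$ norm). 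A smaller point: in your surrogate-time argument for $\tij=\pm\infty$ you invoke openness of the defining inequalities, but $K_{2,-1}\geq0$ is not an open condition; the correct statement is that, on $\{E<m\}$, $\K^+$ coincides with $\{\|\nabla\varphi\|_{L^2}<\|\nabla W\|_{L^2}\}$ by Lemma \ref{uniform bound}, and it is this (quantitatively gapped) norm condition that survives the $H^1$-perturbation supplied by \eqref{Uij=vp}.
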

\begin{proof}

By Corollary \ref{cor:profile}, we have that
$\vnj(0)=e^{-i\tnj\Delta} \Tnj\snj\vp^j\in \K^+$. Then from the
definition of $\Tnj$ and the fact that $|\hnj|\leq 1,$ we have by
Lemma \ref{lem:keylemma}
$$\|e^{-i\taunj\Delta}\snj\vp^j\|_{\dot{H}^1(\R^4)}= \|e^{-i\tnj\Delta}\Tnj\snj\vp^j\|_{\dot{H}^1(\R^4)}< \|W\|_{\dot{H}^1(\R^4)}.$$

If $\hnj\equiv1$, then $\snj=\sij\equiv1$. Hence from
\eqref{Uij=vp}, we have  $\sij\Uij(-\taunj)\in \K^+$, for large $n$.
By Lemma \ref{lem:keylemma}, we have $\sij \Uij(t)\in \K^+$ for
$t\in I^j$.

On the other hand, if $\hnj\ra0$, then we have \beq
\|\sij\Uij(-\tij) \|_{\dot{H}^1(\R^4)} =\lim\limits_{n\ra\infty }
\norm{\sij e^{-i\taunj\Delta} \vp^j}_{\dot{H}^1(\R^4)}<
\|W\|_{\dot{H}^1(\R^4)}.
\eeq This implies $E_c(\sij\Uij(-\tij))>0$
by  Remark \ref{rem:positi}. And by the Sobolev embedding
$\dot{H}^1(\R^4)\hookrightarrow L^4(\R^d)$, we have
 $$E_c(\sij\Uij(-\tij))=\lim\limits_{n\ra\infty} E_c(\snj e^{-i\taunj\Delta} \vp^j )=\lim\limits_{n\ra\infty} E_c(v^j_n(0) )<E_c(W).$$
Hence, if $t\in I^j$, we have  $\sij \Uij(t)\in \K^+_c$, which  ends the proof.

\end{proof}

\begin{lem}\label{lem:nonproff12}
 Let $\Uij$ be constructed by above discussion and $I^j$ be the maximal lifespan of $\Uij$. Then there exists $j_0$ large enough such that the following property:
\beq
\sum\limits_{j\geq j_0} \norm{\Japnb \Uij}_{S(\R)}^2\leq \sum\limits_{j\geq j_0} \norm{\vp^j}^2_{H^1(\R^4)}<\infty.
\eeq

\end{lem}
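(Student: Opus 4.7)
The plan is to combine two ingredients: (i) the profile $H^1$ norms are square-summable, by virtue of the asymptotic orthogonality of Proposition \ref{prop:profile} and the coercivity of $E$ on $\K^+$; (ii) once $j$ is large enough that $\|\varphi^j\|_{H^1}$ lies below the small-data threshold of Theorem 2.4, the corresponding limit equation has a global solution $U_\infty^j$ whose Strichartz $S(\R)$ norm is controlled by $\|\varphi^j\|_{H^1}$.

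For (i), Corollary \ref{cor:profile} places every $v_n^j(0)$ in $\K^+$ for large $n$, and Lemma \ref{lem:kgeq0} gives the coercive bound $\|\nabla v_n^j(0)\|_{L^2}^2\les E(v_n^j(0))$. Combined with the mass and energy orthogonalities \eqref{de-mass}--\eqref{de-energy}, this yields uniformly in $k$,
$$\sum_{j=1}^{k}\Big(\|\nabla v_n^j(0)\|_{L^2}^2+\|v_n^j(0)\|_{L^2}^2\Big)\les \|u_n(0)\|_{H^1(\R^4)}^2+o_{n\to\infty}(1).$$
A direct Fourier computation using the explicit form of $\sigma_n^j$ and the $\dot{H}^1$-invariance of $T_n^j$ shows that in both scale regimes
$$\lim_{n\to\infty}\Big(\|v_n^j(0)\|_{\dot{H}^1}^2+\|v_n^j(0)\|_{L^2}^2\Big)=\|\varphi^j\|_{H^1(\R^4)}^2.$$
Indeed, in the $h_n^j\equiv 1$ regime $\sigma_n^j$ is the identity and both norms split; in the $h_n^j\to 0$ regime the Fourier multiplier $|\xi|^2\langle\xi\rangle^2/(|\xi|^2+(h_n^j)^2)$ arising from $\|\sigma_n^j\varphi^j\|_{\dot{H}^1}^2$ converges pointwise and boundedly to $\langle\xi\rangle^2$ while the $L^2$ part vanishes. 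Taking $n\to\infty$ then extracts $\sum_j\|\varphi^j\|_{H^1(\R^4)}^2<\infty$.

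For (ii), let $\delta$ denote the small-data threshold of Theorem 2.4. By step (i) there exists $j_0$ such that for every $j\ge j_0$ the Strichartz estimate \eqref{st1} gives $\|\langle\nabla\rangle e^{it\Delta}\varphi^j\|_{V_2(\R)}\le\delta$. Theorem 2.4 applied directly to \eqref{U;h=1}, or to \eqref{U;h=0} whose conjugated cubic nonlinearity admits the same Strichartz bookkeeping \eqref{nl-3} since $|\nabla|\langle\nabla\rangle^{-1}$ is bounded on every $L^p$, then produces a global solution with
$$\|\langle\nabla\rangle U_\infty^j\|_{S(\R)}\le\|\varphi^j\|_{H^1(\R^4)}.$$
When $\tau_\infty^j=\pm\infty$, one reads the statement as a final-value (scattering) problem, solved by the same contraction in $V_2(\R)\cap S(\R)$. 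Squaring and summing over $j\ge j_0$ yields both inequalities of the lemma.

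The main obstacle is step (i): one has to track carefully how the conjugation $\sigma_n^j$ converts the $\dot{H}^1$-orthogonal decomposition underlying Proposition \ref{prop:profile} into a full $H^1$-orthogonal statement, because in the $h_n^j\to 0$ regime the naive mass orthogonality \eqref{de-mass} only delivers $M(v_n^j(0))\to 0$, so the $L^2$ component of $\varphi^j$ emerges solely through the low-frequency amplification built into $\sigma_n^j$. Step (ii) is then a routine small-data argument adapted to the two limit equations.
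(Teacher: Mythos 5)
Your overall strategy is the same as the paper's: show the profile norms $\norm{\vp^j}_{H^1}^2$ are summable so the tail is below the small-data threshold, then bound $\norm{\Japnb\Uij}_{S(\R)}$ by $\norm{\vp^j}_{H^1}$ via a small-data/contraction argument for each of the two limit equations. Your step (i) is fine, and in fact simpler than you make it: since the multiplier of $\snj$ is $\langle\xi\rangle/\big((\hnj)^2+|\xi|^2\big)^{1/2}$ and $\Tnj$ scales $\dot H^1$ isometrically and $L^2$ by $(\hnj)^2$, one has the exact identity $\norm{\Tnj\snj\psi}_{H^1}=\norm{\psi}_{H^1}$, so the $H^1$ square-summability of the $\vp^j$ follows directly from the boundedness of $u_n(0)$ in $H^1$ together with the decoupling of the decomposition (this is how the paper argues); your detour through the $\K^+$ coercivity and the mass/energy decoupling reaches the same conclusion.

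The gap is in step (ii), in the regime $\hnj\to0$. The equation \eqref{U;h=0} is not \eqref{001}, so Theorem 2.4 cannot be applied "directly", and your stated reason that the conjugated nonlinearity "admits the same Strichartz bookkeeping since $|\nabla|\Japnb^{-1}$ is bounded on every $L^p$" addresses only the harmless outer factor $(\sij)^{-1}$. The real difficulty is the inner factor $\sij=\Japnb/|\nabla|$, which is \emph{not} bounded on $L^p$ (it is singular at low frequencies): if you apply \eqref{nl-3} with $u=\sij\Uij$, the right-hand side contains $\norm{\sij\Uij}_{W_1(\R)}^2$, and this is not controlled by $\norm{\Japnb\Uij}$ through any multiplier bound. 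What saves the argument is the Sobolev embedding $\|f\|_{L^6_x(\R^4)}\les\||\nabla|f\|_{L^{12/5}_x(\R^4)}$, which gives $\norm{\sij\Uij}_{W_1(\R)}\les\norm{\Japnb\Uij}_{L^6_t L^{12/5}_x}\les\norm{\Japnb\Uij}_{S(\R)}$, i.e. the derivative gained from Sobolev exactly absorbs the $|\nabla|^{-1}$ singularity of $\sij$; this is precisely how the paper closes the continuity argument in \eqref{bdd-S-U}. Without this ingredient your contraction in $V_2(\R)\cap S(\R)$ does not close as written, so you should either insert this embedding explicitly or redo the smallness estimate as in \eqref{bdd-S-U}; with that correction the rest of your argument (including the final-value case $\tij=\pm\infty$) goes through.
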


\begin{proof} For any $\delta>0$, by the linear decomposition, there exist two constants $j_0$ and $n_0$ large enough with the property that:
If $n\geq n_0$, then we have
$$\sum\limits_{j\geq j_0}\norm{e^{-i\taunj\Delta}\vp^j }^2_{H^1(\R^4)}=\sum\limits_{j\geq j_0}\norm{\vp^j }^2_{H^1(\R^4)}=  \sum\limits_{j\geq j_0}
\norm{ \Tnj e^{-i\taunj\Delta}\snj \vp^j}^2_{H^1(\R^4)}<\delta.$$

If $\hnj\equiv1,$ by small data theory of \eqref{001}, we have $$\norm{\Japnb \Uij }^2_{S(\R)}\les \norm{\vp^j}^2_{H^1(\R^4)}.$$
If $\hnj\ra0$,   by the Strichartz estimates and the Mikhlin multiplier theorem, we have
\begin{align}\nonumber
\norm{\Japnb  \Uij}_{S(\R)} & \les \norm{\vp^j}_{H^1(\R^4)}+\norm{
|\nabla| f_1(\sij\Uij) }_{L^2_t(\R; L^{4/3}_x(\R^4))} \\\nonumber
                             & \les     \norm{\vp^j}_{H^1(\R^4)}+\norm{\sij\Uij}^2_{W_1(R)} \norm{\Japnb \Uij}_{L^6_t(\R; L^{12/5}_x(\R^4))}\\\nonumber
                             & \les  \norm{\vp^j}_{H^1(\R^4)}+ \norm{\Japnb  \Uij}_{S(\R)}^2\norm{\Japnb \Uij}^{1/3}_{L^2_t(\R; L^{4}_x(\R^4))}
                              \norm{\Japnb \Uij}^{2/3}_{L^\infty_t(\R; L^{2}_x(\R^4))}\\\label{bdd-S-U}
                             & \les  \norm{\vp^j}_{H^1(\R^4)}+ \norm{\Japnb  \Uij}_{S(\R)}^3 .
\end{align}
Hence,  by the continuity argument, for sufficiently small $ \delta>0$, we have $\norm{\Japnb  \Uij}_{S(\R)}\les \norm{\vp^j}_{H^1(\R^4)} $.
Combining this two cases together, we have $$\sum_{j\geq j_0}\norm{\Japnb  \Uij}^2_{S(\R)}\les \sum_{j\geq j_0}\norm{\vp^j}^2_{H^1(\R^4)} .$$
This ends the proof of this lemma.

\end{proof}

Recall $ST(I)=W_1(I)\cap W_2(I)$ in Section 2.1. Now we define the scattering size $\norm{\sij\Uij}_{ST^j(I)} $ of $\sij\Uij$  for each $j$ by
\begin{equation*}
ST^j(I) =\left\{
           \begin{array}{ll}
             W_1(I)\cap W_2(I), & \;\;\;\;\;\hbox{for $\hnj\equiv1$;} \\
             W_1(I), & \;\;\;\;\; \hbox{    $\hij=0$.}
           \end{array}
         \right.
\end{equation*}

\begin{lem}\label{lem:nonprosta}
  Let $k_0\in\N$ such that for any $1\leq j\leq k_0$,
\beq\label{nlp-01} \|\sij\Uij\|_{ST^j(I^j)} <\infty. \eeq Then we
have $I^j=\infty$ and \beq\label{nlp-02} \norm{\nabla
\ujn}_{V_0(\R)} + \norm{\ujn}_{ST(\R)} \les \|\Japnb \Uij\|_{S(\R)}
\les 1,\text{ for } 1\leq j\leq k_0. \eeq And there exists $B>0$
such that: for any given $k\in \N$ and $1\leq k\leq k_0$, there
exists $\ N_k\in\N$ s.t
 \beq \label{nlp-03} \sup_{n\geq N_k}
\left(\norm{\sum\limits_{j=1}^k \nabla
\ujn}_{V_0(\R)}+\norm{\sum\limits_{j=1}^k \ujn}_{ST(\R)} \right)
\leq B. \eeq Furthermore, if \eqref{nlp-01} holds for any $j
\in\N$, then, for any $p\in \{12/7,\;2\}$ and $j\geq 1$, we have
\beq\label{nlp-04} \lim\limits_{k\ra\infty} \lim\limits_{n\ra\infty}
\norm{\ujn |\nabla|^{s} e^{it\Delta }\omega^k_n
}_{L^p_{t,x}(\R^\times \R^4)} =0, \text{ and }
 s\in\{0,\;1\}.
\eeq
\end{lem}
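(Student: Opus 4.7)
The plan is to treat the three conclusions in sequence. For the first, the hypothesis $\|\sij\Uij\|_{ST^j(I^j)}<\infty$ will be upgraded to the full Strichartz bound $\|\Japnb\Uij\|_{S(\R)}\les 1$: I would partition $I^j$ into finitely many subintervals on each of which the $ST^j$ norm of $\sij\Uij$ is small enough to close a Strichartz bootstrap using the nonlinear estimates \eqref{nl-3}--\eqref{nl-4}; the iteration \eqref{bdd-S-U} already implements this in the $\hnj\to 0$ case, and when $\hnj\equiv 1$ the equation \eqref{U;h=1} is a standard subcritical-perturbed energy-critical NLS for which the same iteration applies. The blow-up alternative (3) of the local well-posedness theorem then forces $I^j=\R$. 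Finally, unwinding the identity $\ujn(t,x)=\Tnj\snj\Uij\big((t-\tnj)/(\hnj)^2\big)$ and using that $\Tnj$ and $\snj$ preserve (or improve) the relevant space-time Strichartz norms transfers the bound to $\ujn$.

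For the summation bound I would invoke the parameter orthogonality \eqref{de-index}: for every pair $j\neq j'$, at least one of $|\log(\hnj/\hnjj)|$, $|(\tnj-\tnjj)/(\hnj)^2|$, $|(\xnj-\xnjj)/\hnj|$ tends to infinity, and a change of variables combined with dominated convergence shows that in any Strichartz space $L^q_tL^r_x$ with $q,r<\infty$,
\[
\Big\|\sum_{j=1}^k \ujn\Big\|_{L^q_tL^r_x}^{\min(q,r)}=\sum_{j=1}^k\|\ujn\|_{L^q_tL^r_x}^{\min(q,r)}+o_n(1),
\]
and analogously for $\nabla\ujn$ in $V_0(\R)$. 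Bounding each summand by the first part of the proof for the small indices and by Lemma \ref{lem:nonproff12} for the large indices then yields the square-summability $\sum_{j\geq 1}\|\Japnb\Uij\|_{S(\R)}^2<\infty$, from which the uniform constant $B$ emerges after taking $n$ sufficiently large depending on $k$.

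For the decoupling estimate \eqref{nlp-04}, H\"older's inequality splits the product into $\ujn$ in a Strichartz space (uniformly controlled by the previous steps) against $|\nabla|^s e^{it\Delta}\omega^k_n$ in a complementary space whose norm must tend to zero. The decay in $\dot H^1$-admissible $L^qL^r$ pairs furnished by \eqref{lp-er} transfers directly to $e^{it\Delta}\omega^k_n$ because $|\nabla|/\Japnb$ is a Mikhlin multiplier, while the uniform $H^1$-bound on $\omega^k_n$ gives boundedness in every $L^2$- and $\dot H^{1/2}$-admissible pair; interpolation between these two regimes delivers decay in the intermediate H\"older-dual spaces relevant to $p\in\{12/7,2\}$. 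The principal obstacle is the case $s=1$: since the symbol $|\xi|^2/\langle\xi\rangle$ is not Mikhlin, I would circumvent this by a high/low frequency decomposition of $\omega^k_n$, absorbing $|\nabla|$ into $\Japnb$ on the high-frequency piece (where the two are comparable) and using Bernstein's inequality on the low-frequency piece to reduce matters to the $s=0$ estimate already obtained.
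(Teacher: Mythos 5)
Your treatment of \eqref{nlp-02} and \eqref{nlp-03} follows essentially the paper's route (a Strichartz bootstrap/continuity argument for $\|\Japnb\Uij\|_{S(\R)}\les1$, Mikhlin-multiplier transfer from $\Uij$ to $\ujn$, then the elementary expansion of the $k$-fold sum combined with the orthogonality \eqref{de-index} and the tail summability of Lemma \ref{lem:nonproff12}); these parts are sound. The same holds for the $s=0$ case of \eqref{nlp-04}: interpolating $W_2$ between $W_1$ and $V_0$ and using the decay of $\|e^{it\Delta}\omega^k_n\|_{W_1(\R)}$ furnished by \eqref{lp-er} is exactly what the paper does.

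The genuine gap is the case $s=1$. Property \eqref{lp-er} gives smallness only for the $|\nabla|^{-1}\Japnb$-weighted (essentially undifferentiated) free evolution of the remainder; $\nabla e^{it\Delta}\omega^k_n$ is merely \emph{bounded} in Strichartz norms by the uniform $H^1$-bound, with no smallness as $k,n\to\infty$. Your high/low decomposition does not repair this: on the high-frequency piece, after absorbing $|\nabla|$ into $\Japnb$ you are left with $\Japnb e^{it\Delta}P_{>N}\omega^k_n$, for which no decay is available, and $\|P_{>N}\omega^k_n\|_{\dot H^1}$ need not be small uniformly in $n,k$ for any fixed $N$, since a remainder in an $H^1$ profile decomposition with scales $\hnj\to0$ can carry its full $\dot H^1$-norm at arbitrarily high frequencies. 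Likewise, interpolating between decay with zero derivatives and boundedness with one derivative yields decay only for $|\nabla|^s$ with $s<1$; the endpoint $s=1$ is precisely the four-dimensional logarithmic failure alluded to in the abstract. The paper closes this by a different mechanism: it approximates $\snj\Uij$ in $W_1(\R)$ (resp.\ $(\hnj)^{1/2}\snj\Uij$ in $W_2(\R)$) by $v^j\in C^\infty_c(\R\times\R^4)$ and, on the compact space-time support of $v^j$, invokes the local estimate of Lemma~2.5 in \cite{K-V-AJM2010}, which bounds $\||\nabla|e^{it\Delta}\tilde{\omega}^{j,k}_n\|_{L^2_{t,x}(|t|\leq T_j,|x|\leq R_j)}$ by $(T_j)^{1/9}(R_j)^{1/18}\|e^{it\Delta}\tilde{\omega}^{j,k}_n\|_{W_1(\R)}^{1/3}\|\nabla\tilde{\omega}^{j,k}_n\|_{L^2_x}^{2/3}$; the smallness then comes from the $W_1$-factor via \eqref{lp-er}, and it is the space-time localization (not a frequency cut) that allows a positive power of the decaying norm to sit against the full derivative. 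Without this local-smoothing-type input (or an equivalent bilinear argument), your argument for \eqref{nlp-04} with $s=1$ does not close.
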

\begin{proof}
First, $I^j=\R$ follows from the local well-posedness theory of
\eqref{U;h=1} and \eqref{U;h=0} and the scattering size condition of
\eqref{nlp-01}.

Next, for the first inequality in \eqref{nlp-02}, by employing the Mikhlin multiplier theorem, we have
\begin{align}\label{bdd-w1}
\norm{\ujn}_{W_1(\R)}=&\norm{\snj\Uij}_{W_1(\R)} \les \norm{\sij\Uij}_{W_1(\R)}\les \norm{\Japnb
\Uij}_{S(\R)},\\
\label{bdd-w2}
\norm{\ujn}_{W_2(\R)}  &\leq \norm{\ujn}^{1/2}_{W_1(\R)}
     \norm{\ujn}^{1/2}_{V_0(\R)}\\
    \nonumber &  \les  \norm{\Japnb \Uij}^{1/2}_{S(\R)}   (\hnj)^{1/2}\norm{\snj\Uij}^{1/2}_{V_0(\R)} \\
    \nonumber & \les \norm{\Japnb \Uij}_{S(\R)},\\
    \nonumber \text{ and } \\
\label{bdd-v0}\norm{\nabla \ujn}_{V_0(\R)}=&\norm{\nabla \snj\Uij}_{V_0(\R)}\les
\norm{\nabla\sij\Uij}_{S(\R)}\les \norm{\Japnb \Uij}_{S(\R)}.
\end{align}
Hence to finish \eqref{nlp-02}, it suffices to show
\begin{equation}\label{bdd-JsU}
\norm{\Japnb \Uij}_{S(\R)}\les 1.
\end{equation}

If $\hnj\ra 0$, by using the Strichartz estimates, \eqref{Uij=vp}, and \eqref{nlp-01}, we have
\begin{align}\label{bdd-JsU-h=0}
\norm{\Japnb  \Uij}_{S(\R)} & \les
\norm{\vp^j}_{H^1(\R^4)}+\norm{\Japnb (\sij)^{-1} f_1(\sij\Uij)
}_{L^2_t(\R; L^{4/3}_x(\R^4))} \\\nonumber
             & \les      \norm{\vp^j}_{H^1(\R^4)}+\norm{\sij\Uij}^2_{W_1(R)} \norm{\Japnb \Uij}_{L^6_t(\R; L^{12/5}_x(\R^4))}  \\\nonumber
             & \les  \norm{\vp^j}_{H^1(\R^4)}+ \norm{\Japnb \Uij}^{1/3}_{L^2_t(\R; L^{4}_x(\R^4))}
             \norm{\Japnb \Uij}^{2/3}_{L^\infty_t(\R; L^{2}_x(\R^4))}.
\end{align}
In this case,  similar to Lemma \ref{lem:nonproff12},
$\norm{\vp^j}_{H^1(\R^4)} \les 1$ and by Lemma \ref{lem:nonprofi},
we have $\norm{\Japnb \Uij}_{L^\infty_t(\R; L^{2}_x(\R^4))}\les 1$.
Thus, by the weighted H\"odelr inequality, we have
$$\norm{\Japnb \sij \Uij}_{S(\R)} \les 1.$$

On the other hand, if $\hnj\equiv 1$, then $\sij=1$. Similar to the former case, by the condition \eqref{nlp-01}, we have
\begin{equation}\label{bdd-JsU-h=1}
\begin{split}
\norm{\Japnb   \Uij}_{S(\R)} & \les 1+\norm{\Japnb \left[f_1( \Uij) +f_2( \Uij)\right] }_{L^2_t(\R; L^{4/3}_x(\R^4))} \\
            & \les  1 .
\end{split}
\end{equation}
By using  the standard continuity argument again, we have \eqref{bdd-JsU} in this case, which implies \eqref{nlp-02}.

Second, to prove \eqref{nlp-03},  we use 
$$ \abs{\Big|\sum_{1\leq j\leq k} \ujn\Big|^q -\sum_{1\leq j\leq k} \abs{\ujn}^q}\les_{k,q} \sum_{1\leq j\leq k}
\sum_{1\leq j'\leq k   \atop j'\neq j }\abs{\ujn}^{q-1}\abs{\ujjn}
,\;\; 1<q<\infty.   $$
to obtain
\begin{equation}\label{elem}
\Big\|\sum_{1\leq j\leq k}\ujn\Big\|^6_{W_1(\R)}\leq \sum_{1\leq j\leq k} \norm{\ujn}^6_{W_1(\R)}+ C_k \sum_{1\leq j\leq k}
\sum_{1\leq j'\leq k\atop j'\neq j }\int_{\R}\int_{\R^4} \abs{\ujn}^{5}\abs{\ujjn} dxdt.
\end{equation}
Without loss of generality, we assume that $k\geq j_0$ in Lemma \ref{lem:nonproff12}. Hence, we have
\begin{equation}\label{bdd-sum-u}\begin{split}
\sum_{1\leq j\leq k} \norm{\ujn}^6_{W_1(\R)}  \les  &  \sum_{1\leq j\leq j_0} \norm{\ujn}^6_{W_1(\R)} +\sum_{j_0<j\leq k} \norm{\ujn}^6_{W_1(\R)} \\
                            \les & \sum_{1\leq j\leq j_0} \norm{\Japnb \Uij}^6_{S(\R)} +\sum_{j_0<j\leq k}\norm{\vp^j}^2_{H^1}<+ \infty.
                        \end{split}
\end{equation}
On the other hand, by the almost  decoupling condition \eqref{de-index}, there exists $N_{k}\in \N$ such that
\begin{equation}\label{de-Uij}
\int_{\R}\int_{\R^4} \abs{\ujn}^{5}\abs{\ujjn} dxdt\leq \frac{1}{C_k  k^2}, \text{ for } j\neq j' \text{ and }n\geq N_{k}.
\end{equation}
Hence, from \eqref{elem}, \eqref{bdd-sum-u} and \eqref{de-Uij},
there exists $B_0>0$, which suffices that  for any $1\leq k\leq
k_0$, $\exists N'_k\in N$ such that
\begin{equation}\label{}
\sup_{n\geq N'_k} \Big\|\sum_{j=1}^k \ujn\Big\|_{W_1(\R)} \leq  B_0.
\end{equation}

Similarly, by the above  elementary inequality, Lemma
\ref{lem:nonproff12}  and the almost decoupling condition
\eqref{de-index}, we can prove that
 there exists $B_1>0$, which guarantees  that  for any $1\leq k\leq k_0$, $\exists N''_k\in N$ such that
\begin{equation}\label{}
\sup_{n\geq N''_k} \norm{\sum_{j=1}^k \nabla \ujn}_{V_0(\R)} +\sup_{n\geq N_k} \norm{\sum_{j=1}^k \ujn}_{W_2(\R)} \leq  B_1.
\end{equation}
Hence, by taking $B=\max\{B_0, B_1\}$ and $N_k=\max\{N'_k,N''_k \}$, we have \eqref{nlp-03}.

Finally, we turn to the proof of \eqref{nlp-04}.
 For the case $s=0$, using the H\"older inequality and the  Mikhlin multiplier theorem, we have as $n\ra \infty$
 \begin{equation}\label{nlp-s=0}
 \begin{split}
         &\norm{\ujn  e^{it\Delta}\onk }_{L^{12/7}_{t,x} \cap L^{2}_{t,x}} \\
    \les& \norm{\ujn}_{V_0(\R)} \left[\norm{ e^{it\Delta}\onk }_{W_1(\R)}+\norm{ e^{it\Delta}\onk }_{W_2(\R)} \right]    \\
     \les &\norm{\hnj\snj\Uij}_{L^3_{t,x}} \left[\norm{ e^{it\Delta}\onk }_{W_1(\R)}+\norm{  e^{it\Delta}\onk }^{1/2}_{W_1(\R)}
       \norm{e^{it\Delta}\onk }^{1/2}_{V_0(\R)} \right]\\
      \les & \norm{\Japnb\Uij}_{S(\R)}\norm{ e^{it\Delta}\onk }^{1/2}_{W_1(\R)} \left[ \norm{ e^{it\Delta}\onk }^{1/2}_{W_1(\R)}
      +\norm{\onk }^{1/2}_{H^1(\R^4)}\right] \ra 0,
 \end{split}
\end{equation}
where we used the fact that \eqref{lp-er} and $\onk\in H^1(\R^4)$ in
Proposition \ref{prop:profile}.

For the case $s=1$, from the fact that
$$\norm{\snj\Uij- \sij \Uij}_{W_1(\R)} \les
\left\{
                  \begin{array}{ll}
\norm{(\frac{\hnj}{\hnj+|\nabla|}) \Japnb\Uij}_{S(\R)}\ra 0, &\text{  as  } n\ra \infty, \text{ if } \hnj \ra 0 \\
    0, & \hnj\equiv 1,
\end{array}
 \right. $$
we have that  for any $\ep>0$, we can find $v^j\in
C_c^\infty(\R\times\R^4) $ and $N_j\in\N$ such that
$$ \norm{\snj\Uij-v^j}_{W_1(\R) }\leq \ep\quad \text{ for }\quad n>N_j . $$ Hence, by the H\"older inequality and the Strichartz estimates we have
\begin{equation}\label{nlp-s=1,1}
 \begin{split}
         &\norm{\ujn |\nabla| e^{it\Delta}\onk }_{ L^{2}_{t,x}(\R\times\R^4)} \\ 
    \les&  \norm{\snj\Uij \left[|\nabla| e^{i\;\centerdot\;\Delta}\onk\right ]
    \left(t(\hnj)^2+\tnj,x\hnj+\xnj\right)}_{ L^{2}_{t,x}(\R\times\R^4)}(\hnj)^2\\
    \les  &\;\norm{\snj\Uij-\vj}_{W_1(\R)}\norm{\onk }_{H^1(\R^4)}\\
           & +\norm{v^j \left[|\nabla| e^{i\;\centerdot\;\Delta}\onk\right ]\left(t(\hnj)^2+\tnj,x\hnj+\xnj\right)}_{
           L^{2}_{t,x}(\R\times\R^4)}(\hnj)^2.
 \end{split}
\end{equation}

Assume ${\rm supp} \;\vj\subset \{(t,x)\in\R\times\R^4:|t|\leq
T_j,|x|\leq R_j  \}$ and let
$\tilde{\omega}^{j,k}_n(x)=e^{i\tau^j_n\Delta}\onk(x\hnj+\xnj)$,
then by using Lemma 2.5 in \cite{K-V-AJM2010} and \eqref{lp-er} in
Proposition \ref{prop:profile},  we have
\begin{equation}\label{nlp-compact}
\begin{split}
   &   \norm{  \left[ |\nabla|e^{i(t(\hnj)^2+\tnj)\Delta}\onk\right](x\hnj+\xnj) }_{L^2_{t,x}(|t|\leq T_j,|x|\leq R_j)}(\hnj)^2 \\
  =& \hnj\norm{|\nabla|e^{it\Delta}\tilde{\omega}^{j,k}_n}_{L^2_{t,x}(|t|\leq T_j,|x|\leq R_j)} \\
  \les & \hnj(T_j)^{1/9}(R_j)^{1/18} \norm{e^{it\Delta}\tilde{\omega}^{j,k}_n}_{W_1(\R)}^{1/3} \norm{\nabla\tilde{\omega}^{j,k}_n}_{L^2_x(\R^4)}^{2/3} \\
 \les & (T_j)^{1/9}(R_j)^{1/18}  \norm{e^{it\Delta} \onk }_{W_1(\R)}^{1/3} \norm{\nabla\onk}_{L^2_x(\R^4)}^{2/3}  \ra 0,
 \end{split}
\end{equation}
as $k, n\ra\infty$.

On the other hand, for $p=12/7$, we define
 $$V^j=\left\{
             \begin{array}{ll}
                 0,    & \hbox{$\hnj\ra 0$;} \\
               \Uij, & \hbox{$\hnj\equiv1$.} 
             \end{array}
           \right.$$
Then we have  $$\norm{(\hnj)^{1/2}\snj\Uij-V^j}_{W_2(\R)}\les \norm{ |\nabla|^{1/2}(\hnj)^{1/2}\snj\Uij-|\nabla|^{1/2}V^j}_{V_2}\ra 0 , $$
 as $n\ra \infty$.
From this we can find $v^j\in C^\infty_c(\R\times\R^4)$ with the property that for any $\eta>0 $, there exists $N_j\in \N$, such that
 $\norm{(\hnj)^{1/2}\snj\Uij-v^j}_{W_2(\R)} <\eta $, if $n>N_j$.

By the H\"older inequality and the Strichartz estimates, we have
 \begin{equation}\label{nlp-compact2}
 \begin{split}
         &\norm{\ujn |\nabla| e^{it\Delta}\onk }_{L^{12/7}_{t,x} } \\
    \les& \norm{ (\hnj)^{1/2}\snj\Uij \cdot |\nabla|e^{i(t(\hnj)^2+\hnj)\Delta}\onk(x\hnj+\xnj)  }_{ L^{12/7}_{t,x}} (\hnj)^2  \\
     \les & \eta\norm{|\nabla|e^{i(t(\hnj)^2+\hnj)\Delta}\onk(x\hnj+\xnj)}_{L^3_t,x}(\hnj)^2 \\
      &  + \norm{v^j\left[|\nabla|e^{it(\hnj)^2+\tnj\Delta}\onk\right](x\hnj+\xnj)}_{L^{12/7}_{t,x}(\R\times\R^4)}(\hnj)^2 \\
      \les & \eta \norm{|\nabla| \onk  }_{ L^{2}(\R^4)} +  \norm{v^j\left[|\nabla|e^{it(\hnj)^2+\tnj\Delta}\onk\right]
      (x\hnj+\xnj)}_{L^2_{t,x}(\R\times\R^4)}(\hnj)^2.
 \end{split}
\end{equation}
Similar to \eqref{nlp-compact}, we have the last term of
\eqref{nlp-compact2} tends to $0$, as $k,n\ra\infty$, which together
with \eqref{nlp-s=1,1}, \eqref{nlp-compact}  implies \eqref{nlp-04}.
This ends the proof of Lemma \ref{lem:nonprosta}.

\end{proof}

\begin{prop}\label{prop:scatnon}
Let $\unj$ and $\Uij $ be as above. Assume that \beq \label{un-1}
\|\sij\Uij\|_{ST^j(I^j)}<\infty, \text{\;\; holds for any\;\; }
j\geq 1. \eeq Then we have \beq\label{un<infty} \norm{u_n}_{ST(\R)}
< \infty,\;\; \text{ for sufficient large\;} n. \eeq

\end{prop}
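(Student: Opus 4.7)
The plan is to apply the perturbation theory (Proposition~\ref{prop:stability}) with approximate solution $\tu := \tilde u_n^k$ defined in~\eqref{nlp}, in order to transfer the finite scattering size of the nonlinear profiles to $u_n$. The initial data satisfy $\tilde u_n^k(0) - u_n(0) = \sum_{j=1}^k [u_n^j(0) - v_n^j(0)] + \omega_n^k - \omega_n^k = \sum_{j=1}^k [u_n^j(0) - v_n^j(0)]$ (up to $\omega_n^k$ cancelling against itself at $t=0$), and the terms $u_n^j(0) - v_n^j(0)$ tend to zero in $H^1$ by the ``final value'' construction~\eqref{Uij=vp} together with the Strichartz boundedness of $T_n^j\sigma_n^j$. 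Hence hypothesis~\eqref{lpt-3} reduces to controlling $\Japnb e^{it\Delta}\omega_n^k$ in $V_2$, and this is provided by~\eqref{lp-er} for $k,n$ large.

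Two a priori bounds are required first. The uniform $H^1$-bound on $\tilde u_n^k$ follows from the almost-orthogonality relations~\eqref{de-mass}--\eqref{de-energy}, combined with the uniform-in-time $H^1$ control of each $u_n^j$ furnished by Lemma~\ref{lem:keylemma} and Lemma~\ref{lem:nonprofi}. The uniform $ST$-bound $\|\tilde u_n^k\|_{ST(\R)} \le B + C\|\omega_n^k\|_{H^1}$ comes from~\eqref{nlp-03} of Lemma~\ref{lem:nonprosta} once we take $k_0 \ge k$, since assumption~\eqref{un-1} holds for every $j$.

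The main step is to show that the nonlinear residue
\[
\mathcal{E}_n^k := (i\partial_t + \Delta)\tilde u_n^k + f_1(\tilde u_n^k) - f_2(\tilde u_n^k),\qquad f_1(u) = |u|^2 u,\ f_2(u) = |u|^{4/3}u,
\]
is negligible in $\Japnb L^{3/2}_{t,x}(\R\times\R^4)$ after taking $n\to\infty$ and then $k\to\infty$. I split $\mathcal{E}_n^k$ into three groups: (i) the per-profile residues $(i\partial_t+\Delta)u_n^j + f_1(u_n^j) - f_2(u_n^j)$; (ii) the interaction terms obtained by expanding $f_\ell(\sum_j u_n^j) - \sum_j f_\ell(u_n^j)$ for $\ell=1,2$; (iii) all cross terms involving at least one factor of $e^{it\Delta}\omega_n^k$. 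For (i), when $h_n^j \equiv 1$ the profile $u_n^j$ solves~\eqref{u;h=1} exactly and contributes nothing; when $h_n^j \to 0$, the profile satisfies~\eqref{u;h=0}, so the residue equals $\frac{|\nabla|}{\Japnb}f_1\bigl(\frac{\Japnb}{|\nabla|}u_n^j\bigr) - f_1(u_n^j) + f_2(u_n^j)$, and both pieces vanish: the multiplier defect goes to zero because $\sigma_n^j \to \sigma_\infty^j$ strongly on the frequency scale $(h_n^j)^{-1}$ carried by $U_\infty^j$, while the subcritical term $f_2(u_n^j)$ is handled by the scaling computation which yields a gain of $(h_n^j)^{2/3}$ in $\||\nabla|^s(|u_n^j|^{4/3}u_n^j)\|_{L^{3/2}_{t,x}}$ via~\eqref{nl-4}. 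For (ii), the asymptotic orthogonality~\eqref{de-index} gives $\int\!\int |u_n^j|^{a}|u_n^{j'}|^{b}\,dx\,dt \to 0$ for $j\ne j'$ by the same reasoning that led to~\eqref{de-Uij}, after invoking the fractional product rule~\eqref{fp}. For (iii), I invoke~\eqref{nlp-04} of Lemma~\ref{lem:nonprosta} together with~\eqref{lp-er} to handle products of profiles with $e^{it\Delta}\omega_n^k$ and its gradient.

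Finally, I first choose $k$ so large that the tail $\sum_{j\ge k}\|\Japnb \Uij\|_{S(\R)}^2$ (bounded in Lemma~\ref{lem:nonproff12}) is below the perturbation threshold $\delta$ of Proposition~\ref{prop:stability}, and then choose $n = n(k)$ so large that the remaining errors in (i)--(iii) and the initial-data mismatch are all below $\delta$. Proposition~\ref{prop:stability} then delivers $\|\Japnb u_n\|_{S(\R)} < \infty$, which contains the claimed bound $\|u_n\|_{ST(\R)} < \infty$ in view of the embeddings defining $ST$. The hardest step is~(i) in the regime $h_n^j \to 0$: matching $\frac{|\nabla|}{\Japnb} f_1\bigl(\frac{\Japnb}{|\nabla|}\,\cdot\,\bigr)$ against $f_1(\cdot)$ after applying $\Japnb$ needs the fractional chain rule~\eqref{fc} and a frequency-localized comparison of the multipliers $\sigma_n^j$ and $\sigma_\infty^j$, since any naive estimate loses too many derivatives and would not close in the critical space $L^{3/2}_{t,x}$.
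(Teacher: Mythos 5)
Your proposal follows essentially the same route as the paper's proof: perturbation theory (Proposition \ref{prop:stability}) applied to the approximate solution $\tilde u_n^k$, with the initial-data mismatch controlled by \eqref{Uij=vp} and Strichartz, the uniform $ST$-bound from \eqref{nlp-03}, and the equation error split exactly as the paper does into per-profile residues (the multiplier defect $\sigma_n^j$ vs.\ $\sigma_\infty^j$ plus the $(h_n^j)^{2/3}$ gain on $f_2$), interaction terms killed by the orthogonality \eqref{de-index} via a density argument, and $\omega_n^k$-cross terms killed by \eqref{nlp-04} together with \eqref{lp-er}. The only blemish is the remark that \eqref{lpt-3} ``reduces to controlling $\Japnb e^{it\Delta}\omega_n^k$ in $V_2$'': since $\omega_n^k$ cancels in $u_n(0)-\tilde u_n^k(0)$, \eqref{lpt-3} follows directly from Strichartz and \eqref{Uij=vp} as you already noted, so this extra claim is redundant but harmless.
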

\begin{proof}
To prove \eqref{un<infty}, we will use the perturbation theory
(Proposition \ref{prop:stability}). More precisely, we need to prove that
the nonlinear profile $\tilde{u}^k_n$ constructed above is actually
a sequence of approximation solutions.

First, by Lemma \ref{lem:nonprosta} and  our assumption of $\Uij$ in
\eqref{un-1}, we have that $\ujn$ is a global solution and $\utkn$
is a function of
 $t$ defined on the whole $\R$.

Next, from the inequalities \eqref{nlp-03} in Lemma
\ref{lem:nonprosta} and $H^1(\R^4)$ bounded condition of
$\omega^k_n(x)$ in Corollary \ref{cor:profile}, we have that there exists a constant $B>0$
satisfying the property: there exists $N_{1,k}>0$, such that \beq
\sup\limits_{n\geq N_{1,k}} \norm{\tilde{u}^k_n}_{ST(\R)} \leq B.
\eeq
In fact, the linear profile decomposition implies
\beq
\begin{split}
\norm{u_n(0)-\utkn(0) }_{H^1(\R^4)} &\leq \sum\limits_{j=1}^k   \norm{ v^j_n(0)-\ujn(0)}_{H^1(\R^4)}\\
                                    & \les  \sum\limits_{j=1}^k \norm{e^{-\taunj\Delta}\vp^j-\Uij(-\taunj) }_{H^1(\R^4)}.
\end{split}
\eeq From this and  \eqref{Uij=vp}, we have that for any $k\in\N $,
there exists $N_{2,k}>0$ such that \beq \sup\limits_{n\geq
N_{2,k}}\norm{u_n(0)-\utkn(0) }_{H^1(\R^4)} \leq 1. \eeq Similarly,
by the Strichartz estimates and \eqref{Uij=vp}, for any $k\in \N$,
we can find $N_{3,k}\in \N$ such that \beq \sup\limits_{n\geq
N_{3,k}}\norm{ \Japnb e^{it\Delta} \left( u_n(0)-\utkn(0)
\right)}_{V_2(\R)} \leq \delta, \eeq where the $\delta>0$ is from
the perturbation theory and
depends on $B$ and the uniform bound of $\|u_n(0)\|_{H^1((\R^4))}$.

Hence, by Proposition  \ref{prop:stability}, \eqref{un<infty} holds
if we could show  that there exist $k_0,\;N_0 \in \N$ such that if $k\geq k_0$ and $n\geq N_0$,
  \beq\label{p-t} \norm{\Japnb\left[
(i\pa_t+\Delta) \utkn + f_1(\utkn)-f_2(\utkn)
\right]}_{L_{t,x}^\frac{3}{2}(\R\times\R^4)} \leq \delta. \eeq
To do this, if we define
$f(u)= f_1(u)-f_2(u)$, then we have \beq
\begin{split}
(i\pa_t+\Delta) \utkn + f(\utkn)= &f(\utkn)-f\left(\sum_{j=1}^k \ujn\right)\\
                                  & +\sum\limits_{j=1}^k (i\pa_t+ \Delta)\ujn+f\left(\sum_{j=1}^k \ujn\right).
\end{split}
\eeq
Hence, we just need to show that
 \begin{equation}\label{p-t-1}
\lim\limits_{k\ra \infty} \lim\limits_{n\ra\infty}  \norm{ \Japnb \left[ f\left(\utkn\right)-f\left(\sum_{j=1}^k
\ujn\right)\right]}_{L_{t,x}^\frac{3}{2}(\R\times\R^4)}=0,
\end{equation}
and  \begin{equation}\label{p-t-2}
 \lim\limits_{k\ra \infty} \lim\limits_{n\ra\infty} \norm{ \Japnb \left[ \sum\limits_{j=1}^k (i\pa_t+ \Delta)\ujn+f\left(\sum_{j=1}^k \ujn\right)
   \right]      }_{L_{t,x}^\frac{3}{2}(\R\times\R^4)}=0.
\end{equation}

First, we will prove \eqref{p-t-1}.  
 Let $u(x)$ and $w(x)$ be two functions defined on $\R^4$, then if  $1<r\leq 2$,  we have
\begin{equation}\label{iq-1}
\big||u+w|^r(u+w)-|u|^ru\big| \les (|u|^r +|w|^r)|w|,
\end{equation}
and
\begin{equation}\label{iq-2}
\abs{\nabla\left(|u+w|^r(u+w)-|u|^ru \right)} \les |u|^r|\nabla w|+ (|\nabla u| +|\nabla w|)|w|^r.
\end{equation}
By these two inequalities and the H\"older inequality, to prove
\eqref{p-t-1}, we just need to estimate
\begin{equation}\label{p-t-3}
\begin{split}
   & \norm{ \left[  |e^{it\Delta} \omega^k_n|^{\frac{4}{3}} + |e^{it\Delta} \omega^k_n |^{2} \right]  |e^{it\Delta} \omega^k_n|
    }_{L_{t,x}^\frac{3}{2}(\R\times\R^4)} \\
+ &\norm{ \left[  |e^{it\Delta} \omega^k_n|^{\frac{4}{3}}+ |e^{it\Delta} \omega^k_n |^{2} \right]  |\nabla e^{it\Delta} \omega^k_n|
 }_{L_{t,x}^\frac{3}{2}(\R\times\R^4)},
\end{split}
\end{equation}
\begin{equation}\label{p-t-4}
\norm{ \left[  |e^{it\Delta} \omega^k_n|^{\frac{4}{3}}+ |e^{it\Delta} \omega^k_n |^{2} \right]  \Big| \sum_{j=1}^k \nabla\ujn\Big|
 }_{L_{t,x}^\frac{3}{2}(\R\times\R^4)},
\end{equation}
and
\begin{equation}\label{p-t-5}
\begin{split}
&\norm{\left[  |\sum_{j=1}^k \ujn |^{\frac{4}{3}}+ |\sum_{j=1}^k \ujn |^{2} \right] |e^{it\Delta} \omega^k_n| }_{L_{t,x}^\frac{3}{2}(\R\times\R^4)} \\
+ & \norm{\left[  |\sum_{j=1}^k \ujn |^{\frac{4}{3}}+ |\sum_{j=1}^k \ujn |^{2} \right] |\nabla e^{it\Delta} \omega^k_n|
 }_{L_{t,x}^\frac{3}{2}(\R\times\R^4)}.
\end{split}
\end{equation}

Now deal with \eqref{p-t-3}. Using the H\"older inequalities, we
have
\begin{equation}\label{p-t-6}
  \norm{|e^{it\Delta}\omega^k_n|^{4/3} |e^{it\Delta}\omega^k_n| }_{L_{t,x}^\frac{3}{2}(\R\times\R^4)}\leq \norm{e^{it\Delta}\omega^k_n}^{2/3}_{W_1(\R)}
  \norm{e^{it\Delta}\omega^k_n}^{5/3}_{V_0(\R)},
\end{equation}
\begin{equation}\label{p-t-7}
 \norm{ |e^{it\Delta}\omega^k_n|^{2} |e^{it\Delta}\omega^k_n| }_{L_{t,x}^\frac{3}{2}(\R\times\R^4)}   \leq  \norm{e^{it\Delta}\omega^k_n}^2_{W_1(\R)}
  \norm{ e^{it\Delta}\omega^k_n }_{V_0(\R)},
\end{equation}
\begin{equation}\label{p-t-8}
\begin{split}
  & \norm{|e^{it\Delta}\omega^k_n|^{4/3} |\nabla e^{it\Delta}\omega^k_n |}_{L_{t,x}^\frac{3}{2}(\R\times\R^4)}\\
  \leq  & \norm{e^{it\Delta}\omega^k_n}^{2/3}_{W_1(\R)} \norm{ e^{it\Delta}\omega^k_n}^{2/3}_{V_0(\R)}\norm{ \nabla e^{it\Delta}\omega^k_n}_{V_0(\R)} ,
\end{split}
\end{equation}
and
\begin{equation}\label{p-t-9}
 \norm{|e^{it\Delta}\omega^k_n|^{2} |\nabla e^{it\Delta}\omega^k_n| }_{L_{t,x}^\frac{3}{2}(\R\times\R^4)}\leq \norm{e^{it\Delta}
 \omega^k_n}^{2}_{W_1(\R)} \norm{\nabla e^{it\Delta}\omega^k_n}_{V_0(\R)}.
\end{equation}
Then, by the Strichartz estimates and  combining the above four
inequalities with the fact that  $\|\omega^k_n\|_{H^1(\R^4)}\leq C$,
we have \beq \label{p-t-10} \eqref{p-t-3} \les \left[
\norm{e^{it\Delta}\omega^k_n}^{2/3}_{W_1(\R)} +
\norm{e^{it\Delta}\omega^k_n}^2_{W_1(\R)} \right] .\eeq Hence, by the
property \eqref{lp-er} of $\omega^k_n$, we obtain that
$\eqref{p-t-3}$ tends to $0$, as $n,\;k\ra\infty$.

From the same method of \eqref{p-t-3} and the fact
$$\sup_{n\geq N_k}\norm{\sum_{j=1}^k \nabla \ujn}_{V_0(\R)}\les B,$$
in \eqref{nlp-03} from Lemma \ref{lem:nonprosta}.
we can deal with \eqref{p-t-4}.



To deal with \eqref{p-t-5}, we consider the terms having the form of
\begin{equation}\label{p-t-11}
 \norm{\Big|\sum_{j=1}^k \ujn \Big|^{4/3} \cdot \left||\nabla|^s e^{it\Delta} \omega^k_n \right| }_{L_{t,x}^\frac{3}{2}(\R\times\R^4)}+ \norm{\Big|\sum_{j=1}^k \ujn \Big|^2 \cdot \left||\nabla|^s e^{it\Delta} \omega^k_n \right| }_{L_{t,x}^\frac{3}{2}(\R\times\R^4)},
\end{equation}
where $ s\in\{ 0,1\}$.

By the H\"older inequality and \eqref{nlp-03} in Lemma \ref{lem:nonprosta}, we have
\begin{equation}\label{p-t-12}\begin{split}
                              &  \Big\||\sum_{j=1}^k \ujn |^{4/3} \cdot \left||\nabla|^s e^{it\Delta} \omega^k_n \right|
                               \Big\|_{L_{t,x}^\frac{3}{2}(\R\times\R^4)}  \\
                             \les   &   \Big\|\sum_{j=1}^k \ujn\Big\|^{1/3}_{W_2(\R)} \cdot \Big\| \sum_{j=1}^k \ujn   |\nabla|^s e^{it\Delta}
                             \omega^k_n\Big\|_{L^{12/7}_{t,x}(\R\times\R^4)} \\  
                             \les & \Big\|  \sum_{j=1}^k \ujn   |\nabla|^s e^{it\Delta} \omega^k_n\Big\|_{L^{12/7}_{t,x}(\R\times\R^4)} ,
                              \end{split}
\end{equation}
and
\begin{equation}\label{p-t-13}
\begin{split}
  &  \Big\||\sum_{j=1}^k \ujn |^2 \cdot \left||\nabla|^s e^{it\Delta} \omega^k_n \right| \Big\|_{L_{t,x}^\frac{3}{2}
(\R\times\R^4)}  \\
  \les   &   \Big\|\sum_{j=1}^k \ujn \Big\|_{W_1(\R)}  \cdot  \Big\| \sum_{j=1}^k \ujn   |\nabla|^s e^{it\Delta}\omega^k_n \Big\|_{L^{2}_{t,x}(\R\times\R^4)} \\ 
 \les & \Big\| \sum_{j=1}^k \ujn   |\nabla|^s e^{it\Delta} \omega^k_n \Big\|_{L^{2}_{t,x}(\R\times\R^4)}.
                              \end{split}
\end{equation}

Hence, to prove \eqref{p-t-1}, we just need show
\begin{equation}\label{p-t-14}
\lim\limits_{k\ra \infty} \lim\limits_{n\ra\infty}    \Big\|  \sum_{j=1}^k \ujn   |\nabla|^s e^{it\Delta} \omega^k_n \Big\|_{L^{p}_{t,x}(\R\times\R^4)}=0,
\end{equation}
 where $p\in\{12/7,\;2\}$ and $s\in\{0,\;1\}$.

By Lemma \ref{lem:nonproff12}, for any $\ep>0$, there exists $J(\ep)>0$ such that
\begin{equation}\label{p-t-15}
\sum\limits_{J(\ep)\leq j\leq k} \norm{\ujn}_{ST(\R)}^2 \leq  \sum\limits_{j\geq J(\ep)} \|\Japnb \Uij \|_{S(\R)}^2  <\frac{1}{4}\ep^2.
\end{equation}
Using the same method of proving \eqref{nlp-03}, by the almost decoupling condition \eqref{de-index}, we have can find $N_{k,\ep}>0$ such that:
 if $n>N_{k,\ep}$, then
\begin{equation}\label{p-t-16}
   \Big\|\sum\limits_{J(\ep)\leq j\leq k} \ujn  \Big\|_{ST(\R)} \leq  \left[\sum\limits_{J(\ep)\leq j\leq k} \norm{\ujn}_{ST(\R)}^2\right]^{1/2}
  +\frac{1}{2}\ep   <\ep,
\end{equation}
where in the last step, we used the boundedness of  $\norm{\vp^j}_{H^1}$.

Then, for  $p\in\{12/7,\;2\}$ and $s\in\{0,\;1\}$, by the H\"older
inequality and the Strichartz estimates, we have
\begin{equation}\label{p-t-17}
  \norm{  \sum_{J(\ep)\leq j\leq k} \ujn   |\nabla|^s e^{it\Delta} \omega^k_n}_{L^{p}_{t,x}(\R\times\R^4)} \leq \norm{ \sum_{J(\ep)\leq j\leq k}
   \ujn }_{ST(\R)} \norm{|\nabla|^s e^{it\Delta} \omega^k_n}_{V_0(\R)} \les \ep.
\end{equation}

On the other hand, for  $p\in\{12/7,\;2\}$ and $s\in\{0,\;1\}$, by
\eqref{nlp-04} in Lemma \ref{lem:nonprosta}, we have
\beq\label{p-t-18} \lim\limits_{k\ra \infty}
\lim\limits_{n\ra\infty}   \norm{ \sum_{1\leq j\leq J(\ep)} \ujn
|\nabla|^s e^{it\Delta} \omega^k_n}_{L^{p}_{t,x} (\R\times\R^4)}=0,
\eeq which together with \eqref{p-t-17} implies \eqref{p-t-14}. This
finishes the proof of \eqref{p-t-1}.\vspace{0.3cm}

\emph{Proof of \eqref{p-t-2}}. Recall that $\ujn$ satisfy that
\eqref{u;h=1} and \eqref{u;h=0}, then we have
\begin{align}
  \eqref{p-t-2} \les & \sum_{s=0,1} \norm{ |\nabla|^s \left[ \sum_{j=1}^k f\left(\ujn \right)-f\left(\sum_{j=1}^k \ujn \right) \right]
   }_{L^{3/2}_{t,x}(\R\times\R^4)}  \label{pt-2-1} \\
&+ \sum_{s=0,1} \Big\||\nabla|^s   \sum_{1\leq j \leq k\atop \hnj\ra 0} \left[  (\sij)^{-1}  f_1\left(\sij\ujn\right)-f_1\left( \ujn \right) \right]
  \Big\|_{L^{3/2}_{t,x}(\R\times\R^4)}  \label{pt-2-2}\\
&+  \sum_{s=0,1} \Big\| |\nabla|^s \sum_{1\leq j \leq k\atop \hnj\ra 0 } f_2\left(\ujn\right) \Big\|_{L^{3/2}_{t,x}(\R\times\R^4)}.\label{pt-2-3}
\end{align}

For \eqref{pt-2-1}, using a elementary inequality,
 \eqref{pt-2-1} can be estimated by
\beq\label{2-1-1}
\sum_{s=0,1} \sum_{1\leq j,j'\leq k \atop j\neq j' }\norm{   \abs{\ujn}^{2} |\nabla|^s \ujjn }_{L^{3/2}_{t,x}(\R\times\R^4)}
\eeq
plus
\beq\label{2-1-2}
\sum_{s=0,1} \sum_{1\leq j,j'\leq k \atop j\neq j' }\norm{   \abs{\ujn}^{4/3} |\nabla|^s \ujjn }_{L^{3/2}_{t,x}(\R\times\R^4)}.
\eeq

For \eqref{2-1-1}, we use density.
By \beq \norm{\snj\Uij-\sij\Uij}_{W_1(\R)}\les
\norm{\frac{\min(1-\hnj,\hnj)}{\hnj+|\nabla|}\Japnb\Uij}_{S(\R)}\ra
0 \eeq and \beq
\norm{\nabla\left[\snj\Uij-\sij\Uij\right]}_{V_0(\R)}\les
\norm{\frac{\min(1-\hnj,\hnj)}{\hnj+|\nabla|}\Japnb\Uij}_{S(\R)}\ra
0, \eeq we have for any $\ep>0$, there exists  $U^j,\; V^j\in
C^\infty_c(\R\times\R^4)$  such that  for sufficiently large $n$,
 \beq \norm{\left[\Tnj\snj\Uij-\Tnj
U^j\right]\left(\frac{t-\tnj}{(\hnj)^2}\right)}_{W_1(\R)} +
\norm{\nabla\left[\Tnj\snj\Uij-\Tnj
V^j\right]\left(\frac{t-\tnj}{(\hnj)^2}\right)}_{V_0(\R)}<\ep ,\eeq
 by density. Hence \beq \eqref{2-1-1} \les \ep +\sum_{s=0,1} \sum_{1\leq j,j'\leq
k \atop j\neq j' }\norm{|\Tnj
U^j|^{2}\left(\frac{t-\tnj}{(\hnj)^2}\right) \cdot |\nabla|^s
 \Tnj
 V^{j'}\left(\frac{t-\tnjj}{(\hnjj)^2}\right)}_{L^{3/2}_{t,x}(\R\times\R^4)}.
\eeq If $\frac{\hnj}{\hnjj}\ra \infty$, we have
\begin{align}
& \norm{|\Tnj U^j|^{2}\left(\frac{t-\tnj}{(\hnj)^2}\right) \cdot
|\nabla| \Tnjj V^{j'}\left(\frac{t-\tnjj}{(\hnjj)^2}
\right)}_{L^{3/2}_{t,x}(\R\times\R^4)}  \\\nonumber \les & (\hnjj)^2
(\hnj)^{-2}\norm{|U^j|^{2}
\left(\frac{t(\hnjj)^2+\tnjj-\tnj}{(\hnj)^2},
\frac{x\hnjj+\xnjj-\xnj}{\hnj}\right) \cdot \left[|\nabla|
V^{j'}\right] (t,x) }_{L^{3/2}_{t,x}(\R\times\R^4)}\\\nonumber \les
& (\hnjj)^2 (\hnj)^{-2}\norm{U^j}_{L^\infty_{t,x}(\R\times\R^4)}
\norm{ |\nabla| V^{j'} }_{L^{3/2}_{t,x}(\R\times\R^4)}^{2}\ra 0,
\end{align}
as $n\ra \infty$.

If $\frac{\hnj}{\hnjj}\ra 0$, we have
\begin{align}
& \norm{|\Tnj U^j|^{2}\left(\frac{t-\tnj}{(\hnj)^2}\right) \cdot
|\nabla| \Tnjj V^{j'}\left(\frac{t-\tnjj}{(\hnjj)^2}
\right)}_{L^{3/2}_{t,x}(\R\times\R^4)}  \\\nonumber \les & (\hnj)^2
(\hnjj)^{-2}\norm{|U^j|^{2}(t,x) \cdot \left[|\nabla| V^{j'}\right]
\left(\frac{t(\hnj)^2+\tnj-\tnjj}{(\hnjj)^2},
 \frac{x\hnj+\xnj-\xnjj}{\hnjj}\right)}_{L^{3/2}_{t,x}(\R\times\R^4)}\\\nonumber
\les & (\hnj)^2 (\hnjj)^{-2}\ra 0,
\end{align}
as $n\ra \infty$.

If there exists $C_j$ such that $\frac{\hnj}{\hnjj}+\frac{\hnjj}{\hnj}<C_j$ for any $n$,  then by the almost decoupling condition \eqref{de-index}, we have
 $$ \abs{\frac{\tnjj-\tnj}{(\hnj)^2}}+\abs{\frac{\xnjj-\xnj}{\hnj}}\ra\infty, \text{ as } n\ra \infty,$$
where by symmetry we assume that  $j<j'$.

Then for sufficiently large $n$, we have
\begin{align}
& \norm{|\Tnj U^j|^{2}\left(\frac{t-\tnj}{(\hnj)^2}\right) \cdot
|\nabla| \Tnjj V^{j'}\left(\frac{t-\tnjj}{(\hnjj)^2}\right)
}_{L^{3/2}_{t,x}(\R\times\R^4)}  \\\nonumber \les & (\hnjj)^2
(\hnj)^{-2}\norm{|U^j|^{2}
\left(\frac{t(\hnjj)^2+\tnjj-\tnj}{(\hnj)^2},
\frac{x\hnjj+\xnjj-\xnj}{\hnj}\right) \cdot \left[|\nabla|
V^{j'}\right] (t,x) }_{L^{3/2}_{t,x}(\R\times\R^4)}\\\nonumber =& 0,
\end{align}
where we used the fact that $U^j,\;V^j\in C^\infty_c(\R\times\R^4)$.

On the other hand, we need to consider
\beq
\sum_{j,j'}\norm{|\ujn|^2\ujjn}_{L^{3/2}_{t,x}(\R\times\R^4)}.
\eeq

If $\hnj\equiv 1$, then we have $\norm{\snj\Uij}_{S(\R)}=\norm{\Uij}_{S(\R)}\leq C $.
If $\hnj\ra 0$, then we have that $\norm{\ujn}_{V_0(\R)}=(\hnj)^2\norm{\snj\Uij}_{V_0}\les \hnj \norm{\Japnb \Uij}_{S(\R)}\ra 0$.
Hence by the similar argument as before, we have  for any $\ep>0$, there exists $U^j,V^j\in C_c(\R\times\R)$ such that
$$ \norm{\ujn-\Tnj U^j}_{W_1(\R)}+\norm{\ujn-\Tnj V^j}_{V_0(\R)}<\ep$$
 for sufficiently large $n$.

Hence for large $n$, we have
\begin{align}\nonumber
&\sum_{j,j'}\norm{|\ujn|^2\ujjn}_{L^{3/2}(\R\times\R^4)} \\\nonumber
       =   &  \sum_{j,j'} \norm{|\ujn|^2 [\ujjn-\Tnjj V^{j'}]}_{L^{3/2}(\R\times\R^4)}+\norm{\abs{\ujn}^2\Tnjj V^{j'}}_{L^{3/2}(\R\times\R^4)}
       \\\nonumber
\les &  \sum_{j,j'} \norm{\ujn}_{W_1(\R)}^2 \norm{\ujjn-\Tnjj
V^{j'}}_{V_0(\R)}+\norm{\abs{\ujn}^2\Tnjj
V^{j'}}_{L^{3/2}(\R\times\R^4)} \\\nonumber
 \les &\; \ep +\sum_{j,j'} (\hnjj)^3(\hnj)^{-2}\norm{\abs{\snj\Uij}^2\left(\frac{t(\hnjj)^2+\tnjj-\tnj}{(\hnj)^2},\frac{x\hnjj+\xnjj-\xnj}{\hnj}\right)
 V^{j'}(t,x)}_{L^{3/2}(\R\times\R^4)}\\\nonumber
 \les & (1+\hnjj)\ep +     (\hnjj)^3(\hnj)^{-2}\norm{\abs{U^j}^2\left(\frac{t(\hnjj)^2+\tnjj-\tnj}{(\hnj)^2},\frac{x\hnjj+\xnjj-\xnj}{\hnj}\right)
 V^{j'}(t,x)}_{L^{3/2}(\R\times\R^4)}              \\
 \les&\; \ep,
\end{align}
where in the last inequality we used the  almost decoupling condition \eqref{de-index} and the fact that $U^j\in C_c^\infty(\R\times\R^4)$.

Note that $$\norm{\ujn}_{W_2(\R)}+\norm{\Japnb\ujn}_{V_0(\R)}\les \norm{\Japnb\Uij}_{S(\R)}. $$
Hence,  we can use the similar argument to prove that $\eqref{2-1-2}$ tends to $0$ as $n\ra \infty$.

%

We consider the contribution of \eqref{pt-2-2}, by the analogue estimates in Lemma 8.9 of
\cite{Nawa2015}. 
Indeed, if $j$ such that $\hnj\ra 0$,  then we
have \beq
\begin{split}
  &\norm{\Japnb \left[ (\sij)^{-1} f_1\brc{\sij\ujn}-f_1\brc{\ujn}\right]}_{L^{3/2}_{t,x}(\R\times\R^4)}\\
           =& \norm{|\nabla| f_1\brc{\sij\Uij}-\brc{\hnj+|\nabla|} f_1(\snj\Uij)}_{L^{3/2}_{t,x}(\R\times\R^4)} \\
     \les & \norm{|\sij\Uij|^2\Japnb\Uij-|\snj\Uij|^2 \Japnb \Uij}_{L^{3/2}_{t,x}(\R\times\R^4)}\\
  \les &   \norm{\Japnb\Uij}_{S(\R)} \norm{\sij\Uij}_{W_1(\R)}\norm{\sij\Uij-\snj\Uij}_{W_1(\R)}\\
\les & \norm{\frac{\hnj}{\hnj+|\nabla|}\Japnb\Uij }_{W_1(\R)}\ra 0,
 \end{split}
\eeq
as $n\ra\infty,$ which implies $\eqref{pt-2-2}\ra 0$.

For \eqref{pt-2-3}, we have
\begin{equation}\label{}
 \begin{split}
   \eqref{pt-2-3} & \les \sum_{1\leq j\leq k \atop \hnj\ra0}(\hnj)^{2/3} \norm{ \Japnb f_2\left(\snj \Uij\right)}_{L^{3/2}_{t,x}(\R\times\R^4)} \\
     & \les  \sum_{1\leq j\leq k \atop \hnj\ra0}(\hnj)^{2/3} \norm{ \snj\Uij}_{W_2(\R)}^{4/3} \left[\norm{|\nabla|\snj\Uij }_{V_0(\R)}
     + \hnj\norm{\snj\Uij }_{V_0(\R)} \right]\\
     & \les  \sum_{1\leq j\leq k \atop \hnj\ra0}  \norm{  \frac{(\hnj)^{1/2}|\nabla|^{1/2}}{\hnj+|\nabla|}  \Japnb\Uij}^{4/3}_{V_2(\R)
     }\norm{\Japnb\Uij}_{V_0(\R)}\ra 0,
 \end{split}
\end{equation}
as $n\ra \infty $, which ends the proof of \eqref{p-t-2}.

\end{proof}

\section{GWP and scattering of $\K^+$}
In this part, we will prove Theorem \ref{thm:main} by the argument
of contradiction.

Firstly, we define the the minimal blowup energy of \eqref{001}.
For fixed $C\in(0,\infty)$, and  any $E\in (0,\infty)$, let
\beq
A_C(E)=\sup\Big\{\norm{u}_{ST(I)}:  u_0\in \K^+ \text { and } E(u_0) \leq E, M(u_0)\leq C \Big\},
\eeq
where $u$ is the solution to \eqref{001} on the maximal lifespan time interval $I$.
Suppose $E_C^*$ such that $A_C(E^*_C)=\infty$ is the minimal blowup energy.

Next, we will prove that: If $E_C^*<m$, then there exists a critical element with energy equal to $E_C^*$ and has some special properties.

Finally, by making use of the interaction Morawetz estimates similar in \cite{Dodson-2014},
 we will prove that the critical element equal to zero, which contradicts with the small data theory.
\subsection{Existence of the critical element}

\begin{lem}\label{lem:5.1} Let $\{u_n\}$ be a sequence of solutions of \eqref{001} in $\K^+$ on $I_n\subset \mathbb{R}$ satisfying
$M(u_n)\leq C$ and \beq\label{u-condition} E(u_n)\rightarrow
E_C^*<m,\;\|u_n\|_{ST(I_n)}\rightarrow\infty\;\; \text{ as }
n\ra\infty. \eeq Then there exists a global solution $u_c$ of
$(CNLS)$ in $\K^+$ satisfying \beq\label{uc-condition}
E(u_c)=E^*_C<m,\; \|u_c\|_{ST(\R)}=\infty. \eeq In addition, there
exist  a sequence $(t_n,x_n)\in(\mathbb{R}\times\mathbb{R}^4)$ and
$\vp\in H^1(\R^4)$ such that, up to a subsequence, we have as
$n\ra\infty$, \beq\label{u-compact}
 \norm{u_n(0,x)-e^{-it_n\Delta}\vp(x-x_n)}_{\dot{H}^1(\R^4)}\ra 0.
\eeq

\end{lem}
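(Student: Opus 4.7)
The plan is to apply the linear profile decomposition of Proposition~\ref{prop:profile} to $\{u_n(0)\}$, which is bounded in $H^1(\R^4)$: the mass bound is built in, and $E(u_n)\to E_C^*<m$ combined with the energy trapping of Lemma~\ref{lem:keylemma} controls $\|\nabla u_n(0)\|_{L^2}$. This yields profiles $\{\varphi^j\}$, parameters $(t_n^j,x_n^j,h_n^j)$, and a remainder $\omega_n^k$. By Corollary~\ref{cor:profile} each $v_n^j(0)$ and each $\omega_n^k$ lies in $\K^+$ with non-negative energy, and both mass and energy decouple asymptotically; in particular every profile has mass $\leq C$ and $\liminf_n E(v_n^j(0))\leq E_C^*$.

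Next I build the nonlinear profiles $u_{(n)}^j$ of Section~4.2 and their limits $U_\infty^j$, which solve the full $\mathrm{CNLS}$ when $h_n^j\equiv 1$ and the pure $\dot H^1$-critical NLS when $h_n^j\to 0$, and argue by contradiction that the decomposition must reduce to a single profile of the first type with vanishing remainder. If not, then either some profile satisfies $\liminf_n E(v_n^j(0))<E_C^*$, or $K=1$ but $h_n^1\to 0$. In the first alternative, each nonlinear profile scatters: for profiles with $h_n^j\equiv 1$, the minimality of $E_C^*$ applied at mass $\leq C$ gives $\|U_\infty^j\|_{ST^j(\R)}<\infty$; for profiles with $h_n^j\to 0$, Lemma~\ref{lem:nonprofi} places $\sigma_\infty^j U_\infty^j$ in $\K^+_c$ and Dodson's scattering result for the 4D focusing energy-critical NLS below the ground state yields the same conclusion. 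In the second alternative, Dodson applies directly. In either case hypothesis~\eqref{un-1} of Proposition~\ref{prop:scatnon} is met, so $\|u_n\|_{ST(\R)}<\infty$ for large $n$, contradicting~\eqref{u-condition}.

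Hence $K=1$, $h_n^1\equiv 1$, $\lim_n E(v_n^1(0))=E_C^*$ and $\lim_n E(\omega_n^1)=0$; Lemma~\ref{lem:kgeq0} applied to $\omega_n^1\in\K^+$ upgrades the latter to $\|\nabla\omega_n^1\|_{L^2}\to 0$. Since $\sigma_n^1\equiv 1$ and $T_n^1\varphi^1=\varphi^1(\cdot-x_n^1)$, setting $(t_n,x_n,\varphi)=(t_n^1,x_n^1,\varphi^1)$ yields~\eqref{u-compact}. Taking $u_c$ to be the maximal-lifespan solution of $\mathrm{CNLS}$ associated with $U_\infty^1$ (a wave-operator construction when $\tau_\infty^1=\pm\infty$, or a translate of $U_\infty^1$ otherwise), energy decoupling and $\K^+$-trapping via Lemma~\ref{lem:keylemma} give $E(u_c)=E_C^*<m$ and a uniform $H^1$ bound for $u_c(t)$, hence globalness by the blow-up alternative in the local theory; $\|u_c\|_{ST(\R)}=\infty$ is forced by the reductio argument above.

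The main obstacle is the critical-type case $h_n^j\to 0$, whose limiting equation is the pure $\dot H^1$-critical NLS rather than $\mathrm{CNLS}$: the inductive minimality of $E_C^*$, being an $H^1$-statement tied to the mass cap $C$, gives no information there, and one must invoke the non-trivial scattering theorem of Dodson. Once this external ingredient is granted, subcritical and critical profiles feed uniformly into Proposition~\ref{prop:scatnon}, and the standard critical element extraction goes through.
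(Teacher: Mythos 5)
Up to the last step your argument is the paper's own: profile decomposition of $u_n(0)$, the energy/mass decoupling and positivity of Corollary \ref{cor:profile}, Dodson's theorem to dispose of the rescaled ($h_n^j\to 0$) profiles, and Proposition \ref{prop:scatnon} to turn "all profiles scatter" into $\|u_n\|_{ST(\R)}<\infty$, forcing a single profile with $h_n^1\equiv 1$ and $\|\omega_n^1\|_{\dot H^1}\to 0$; this part is sound (modulo the same mass-$\le C$ bookkeeping the paper also glosses over) and yields \eqref{u-compact} exactly as in the paper.

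The genuine gap is your proof that $u_c$ is global. You argue: $u_c(t)\in\K^+$ gives a uniform $H^1$ bound, "hence globalness by the blow-up alternative in the local theory." But the blow-up alternative in the local well-posedness theorem is stated in terms of the scattering norm ($T_{max}<\infty\Rightarrow\|u\|_{ST([T,T_{max}))}=\infty$), not in terms of the $H^1$ norm, and because the nonlinearity contains the energy-critical term $|u|^2u$ the local existence time is \emph{not} a function of $\|u_0\|_{H^1}$ alone (the smallness condition is on $\|\langle\nabla\rangle e^{it\Delta}u_0\|_{V_2(I)}$, which depends on the profile of the data). So a uniform $H^1$ bound is perfectly compatible with finite-time (type II) blow-up, and nothing in the local theory lets you conclude $I_{max}=\R$ from it; ruling out exactly this scenario is a nontrivial part of the whole program. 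The paper closes this step differently: assuming $T_{max}<\infty$, it applies the critical-element extraction just proved to the sequence $u_c(\cdot+t_n)$ with $t_n\to T_{max}$ (admissible because $\|u_c\|_{ST([t_n,T_{max}))}=\infty$ by the blow-up criterion, while mass and energy are fixed), obtains $\|u_c(t_n)-e^{-it_n'\Delta}\phi(\cdot-x_n')\|_{\dot H^1}\to 0$ for some $\phi\in H^1$, and then uses Strichartz estimates and a continuity/stability argument to show the solution extends a fixed amount of time beyond $t_n$, contradicting maximality. You need an argument of this kind (or some other genuine substitute) in place of the $H^1$-bound shortcut; as written, that step fails.
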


\begin{proof}
For the sequence $\{u_n\}$ satisfying \eqref{u-condition}, by the time translation symmetry of \eqref{001} we can assume that for each $n$,
$0\in I_n$. As the discussion in Section 4, we have the linear and nonlinear profile decomposition
\begin{align}\nonumber
e^{it\Delta }u_n(0,x) =  \sum_{j=1}^{k} v^j_n(t,x)+ e^{it\Delta} \omega^k_n,\; v^j_n(t,x) = e^{i(t -t^j_n)\Delta} \Tnj \snj \vp^j;\\\label{nlp-2}
\tilde{u}^{ k}_n\triangleq\sum\limits_{j=1}^k \ujn+e^{it \Delta} \omega^k_n,\;\; \ujn(t,x)=\Tnj\snj\Uij\left(\frac{t-\tnj}{(\hnj)^2}\right);\\\nonumber
\norm{\ujn(0)-\vnj(0)}_{H^1(\R^4)}\ra 0,\; \text{ as } n\ra \infty.
\end{align}

By Proposition 4.6, we have that there exists at least one profile $j_0$ such that
$\norm{\sigma^{j_0}_\infty U_\infty^{j_0}}_{ST^{j_0}(I^{j_0})}=\infty$.

Moreover, we claim that there is only one profile in the profile decomposition. In fact, this follows from the definition of $E^*_C$
and the fact that every solution of \eqref{001} in $\K^+$ with  energy less than $E^*_C$ has global finite Strichartz norm.

In summary, we have
$$E(u^1_{(n)}(0))\ra E^*_C,\;\;\; u^1_{(n)}\in \K^+, \;\;\;\norm{\sigma^1_\infty U^1_\infty}_{ST^1(I_1)}=\infty\text{ and }
\norm{\omega^1_n}_{\dot{H}^1(\R^4)}\ra 0.$$

If $h^1_n\ra 0$, then we have $\sigma^1_\infty U^1_\infty$ satisfies
$i\pa_tu+\Delta u+|u|^2u=0$ and
$$E_c( \sigma^1_\infty U^1_\infty)=E^*_C<m,\; \;\; \norm{\sigma^1_\infty U^1_\infty}_{\dot{H}^1}< \norm{W}_{\dot{H}^1},\;\;\;
 \norm{\sigma^1_\infty U^1_\infty}_{W_1(I_1)}=\infty, $$ which contradicts with the results in \cite{Dodson-2014}.

Hence we have $h^0_n\equiv 1$. Let  $u_c=U^1_\infty$, which is a solution of \eqref{001} and satisfies $E( u_c)=E^*_C<m$, which implies  $u_c\in \K^+$.
 \eqref{u-compact} follows from the linear profile decomposition and the fact that $\norm{\omega^1_n}_{\dot{H}^1(\R^4)}\ra 0.$

To prove \eqref{uc-condition}, we only need to prove $u_c$ is a global solution. If not, we can choose a sequence $t_n$ tends to
the finite boundary of $I_1$. Then we have $u_c(t+t_n)$ with finite mass and satisfying the condition \eqref{u-condition}. Hence,
 by the above discussion, we can find a sequence $(t'_n, x'_n)$ and a function $\phi\in H^1(\R^4)$, such that

\begin{equation}\label{uc-phi}
\norm{u_c(t_n)-e^{-it'_n\Delta}\phi(x-x'_n)}_{\dot{H}^1(\R^4)} \ra 0, \text{ as } n\ra \infty.
\end{equation}

For any $\ep>0$, by the  Strichartz estimates, we can find $\delta>0$, such that
$$\norm{ e^{it\Delta} \phi}_{W_1([t_n-t'_n-\delta,t_n-t'_n+\delta])}+\norm{\nabla e^{it\Delta}\phi }_{L^3_{t,x}
([t_n-t'_n-\delta,t_n-t'_n+\delta])}<\ep.$$
This and \eqref{uc-phi} imply that
$$\norm{ e^{it\Delta}u_c(t_n)}_{W_1([-\delta,\delta])}+\norm{\nabla e^{it\Delta} u_c(t_n)}_{{L^3_{t,x}([-\delta,\delta])}}<\ep ,$$
for sufficiently large $n$ (up to a subsequence).
Then by the Strichartz estimates and the H\"older inequality, we have
\begin{equation}\label{}\begin{split}
                           &  \norm{u_c(t)}_{W_1([t_n-\delta,t_n+\delta])}+ \norm{ \nabla  u_c(t) }_{L^3_{t,x}( [t_n-\delta,t_n+\delta])} \\
                          \les & \ep + \norm{\nabla \left[f_1(u_c(t))+f_2(u_c(t))\right]}_{L^{3/2}_{t,x}([t_n-\delta,t_n+\delta])}  \\
                    \les   & \; \ep \;+ \|u_c(t)\|_{W_1([t_n-\delta,t_n+\delta])} ^2 \norm{ \nabla u_c(t)}_{V_0([t_n-\delta,t_n+\delta])} \\
                           & + \|u_c(t)\|_{W_1([t_n-\delta,t_n+\delta])} ^\frac{4}{3} \norm{ \nabla u_c(t)}^{1/3}_{V_0([t_n-\delta,t_n+\delta])}
                           \norm{ \nabla  u_c(t) }^{2/3}_{L^2_{t,x} ([t_n-\delta,t_n+\delta])}.
                        \end{split}
\end{equation}
By the standard continuous method and the fact that $u_c\in \K^+$, we have that   $\norm{u_c(t)}_{W_1([t_n-\delta,t_n+\delta])}<+\infty.$

On  the other hand, by Sobolev embedding relation $\dot{H}^1(\R^4)\hookrightarrow L^4(\R^4),$ we have that
$\norm{u_c(t)}_{W_2([t_n-\delta,t_n+\delta])}<+\infty$. Hence $\norm{u_c}_{ST([t_n-\delta,t_n+\delta])}<+\infty$, which contradicts
with the fact that  $u_c$ is not a global solution.

By the local well-posedness theory, if we take $\ep$ small enough, the solution $u_c(t)$ exists on $[t_n-\delta,t_n+\delta]$, which
 contradicts with the choice of $t_n$. Hence we have $u_c$ is a global solution.
\end{proof}

\subsection{Compactness of the critical element}
In this subsection, for the critical elements as obtained above, we give out their compactness property  and  an important corollary.
\begin{prop}\label{prop:critical}
Let $u_c$ be a forward critical element, i.e.
\begin{equation}\label{u-forward}
\norm{u_c}_{ST([0,\infty))}=\infty .
\end{equation}
Then there exists $x(t):(0,\infty)\ra \R^4$ such that the set
$$\{u_c(t,x-x(t)): 0<t<\infty\} $$
is precompact in $\dot{H}^s$ for any $s\in(0,1]$.
\end{prop}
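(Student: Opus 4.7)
The plan is the standard Kenig--Merle concentration--compactness reduction. It suffices to prove that for every sequence $\{t_n\}_{n\ge 1}\subset(0,\infty)$ there exist a subsequence, translations $\{x_n\}\subset\R^4$, and a function $\phi\in H^1(\R^4)$ with $u_c(t_n,\cdot+x_n)\to\phi$ strongly in $H^1(\R^4)$. Once this holds, a standard selection argument produces $x(t):(0,\infty)\to\R^4$ rendering $\{u_c(t,\cdot-x(t)):t>0\}$ precompact in $H^1$, and the interpolation $\|\varphi\|_{\dot{H}^s}\le\|\varphi\|_{L^2}^{1-s}\|\varphi\|_{\dot{H}^1}^s$ together with mass conservation (which makes the orbit uniformly bounded in $L^2$) upgrades this to precompactness in $\dot{H}^s$ for every $s\in(0,1]$.

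I would apply the linear profile decomposition (Proposition~\ref{prop:profile}) to the $H^1$-bounded sequence $u_c(t_n)$, and construct the associated nonlinear profiles $u^j_{(n)}$ and $\tilde u^k_n$ exactly as in Section~4. By Corollary~\ref{cor:profile} each datum $v^j_n(0)$ and each remainder $\omega^k_n$ lies in $\K^+$, hence has nonnegative energy by Lemma~\ref{lem:kgeq0}; the almost-orthogonality~\eqref{de-energy} then gives $\sum_j\liminf_n E(v^j_n(0))+\liminf_n E(\omega^k_n)\le E^*_C$. If some profile had strictly less energy than $E^*_C$, then by the definition of $E^*_C$ together with the mass bound, every nonlinear profile would satisfy $\|\sigma^j_\infty U^j_\infty\|_{ST^j(I^j)}<\infty$; Proposition~\ref{prop:scatnon} and the perturbation theory of Proposition~\ref{prop:stability} would force $\|u_c(\cdot+t_n)\|_{ST([0,\infty))}<\infty$ for $n$ large. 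But this contradicts $\|u_c\|_{ST([t_n,\infty))}=\infty$, which follows from the additive decomposition $\|u_c\|_{ST([0,\infty))}^6=\|u_c\|_{ST([0,t_n])}^6+\|u_c\|_{ST([t_n,\infty))}^6$ together with $\|u_c\|_{ST([0,t_n])}<\infty$ (local theory, since $t_n<\infty$) and the standing assumption $\|u_c\|_{ST([0,\infty))}=\infty$. Hence there is exactly one nontrivial profile $U^1_\infty$, with $E(v^1_n(0))\to E^*_C$ and $\omega^1_n\to 0$ strongly in $H^1$.

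Next I would rule out the degenerate scale and time parameters. If $h^1_n\to 0$, Lemma~\ref{lem:nonprofi} places $v:=\sigma^1_\infty U^1_\infty$ inside $\K^+_c$ with $E_c(v)=E^*_C<m=E_c(W)$, and $v$ solves the purely energy-critical equation $i\partial_t v+\Delta v=-|v|^2 v$; Dodson's sub-threshold scattering theorem \cite{Dodson-2014} for the focusing energy-critical 4D NLS then yields $\|v\|_{ST^1(\R)}<\infty$, and the perturbation argument of the previous step again produces a contradiction. Hence $h^1_n\equiv 1$, so $\sigma^1_\infty=1$ and $U^1_\infty$ solves the full equation~\eqref{001}. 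If $t^1_n\to-\infty$ along a subsequence, then $\|\langle\nabla\rangle e^{it\Delta}v^1_n(0)\|_{V_2([0,\infty))}\to 0$ (the forward free tail of a linearly dispersing wave), so Proposition~\ref{prop:stability} delivers $\|u_c(\cdot+t_n)\|_{ST([0,\infty))}<\infty$, again contradicting $\|u_c\|_{ST([t_n,\infty))}=\infty$. The case $t^1_n\to+\infty$ is excluded by the symmetric time-reversed argument together with the global existence of $u_c$: otherwise $u_c(t_n,\cdot+x^1_n)\rightharpoonup 0$ weakly in $H^1$ (since $e^{-it^1_n\Delta}\phi^1\rightharpoonup 0$), which is incompatible with strong convergence to a single nontrivial profile of energy $E^*_C>0$. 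Thus $t^1_n\to t^*\in\R$ along a subsequence, and $u_c(t_n,\cdot+x^1_n)\to U^1_\infty(-t^*,\cdot)$ strongly in $H^1$, supplying the desired $\phi$.

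The main obstacle is the scale-concentration scenario $h^1_n\to 0$: this is the only point in the argument that genuinely requires Dodson's sub-threshold scattering theorem for the 4D energy-critical focusing NLS, because without that deep input there is no way to preclude a critical-scale bubble from forming. All the remaining steps are routine manipulations of the profile decomposition, the energy decoupling, and the perturbation framework established in Sections~2 and~4.
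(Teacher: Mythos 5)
Your overall strategy is exactly the paper's: reduce to $\dot H^1$-precompactness along arbitrary time sequences (mass conservation plus the interpolation $\|\cdot\|_{\dot H^s}\le\|\cdot\|_{L^2}^{1-s}\|\cdot\|_{\dot H^1}^{s}$ handles $s\in(0,1]$), run the Section~4 profile machinery on $u_c(t_n)$ as in Lemma~\ref{lem:5.1} to isolate a single profile with $h^1_n\equiv1$ and vanishing remainder (Dodson's theorem being the input that excludes $h^1_n\to0$), and then preclude $t^1_n\to\pm\infty$ by Strichartz-tail plus perturbation arguments. One cosmetic caveat: the decomposition only gives $\omega^1_n\to0$ and hence convergence of $u_c(t_n,\cdot+x^1_n)$ in $\dot H^1$, not in $H^1$; this costs nothing for the proposition (as you note, $\dot H^1$-convergence together with the uniform mass bound already yields $\dot H^s$-precompactness by interpolation), but the claim of strong $H^1$ convergence is not justified by the machinery you invoke.

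The genuine soft spot is your exclusion of $t^1_n\to+\infty$. The contradiction you offer --- that $u_c(t_n,\cdot+x^1_n)\rightharpoonup 0$ weakly is ``incompatible with strong convergence to a single nontrivial profile'' --- is circular: in this scenario the decomposition reads $u_c(t_n,\cdot+x^1_n)=e^{-it^1_n\Delta}\vp^1+o_{\dot H^1}(1)$, which is perfectly consistent with weak convergence to $0$, and no strong convergence to a nontrivial limit has been established (that is precisely what is being proved), so nothing is contradicted. Moreover the correct exclusion is not a literal mirror of your $t^1_n\to-\infty$ case: when $t^1_n\to+\infty$ one has $\norm{\Japnb e^{it\Delta}\vp^1}_{S((-\infty,-t^1_n))}\to0$, so running the same Strichartz/continuity (perturbation) argument backward in time gives $\norm{u_c}_{ST((-\infty,t_n))}\to0$, and since the intervals $(-\infty,t_n)$ all contain a fixed nontrivial interval (and exhaust $\R$ if $t_n\to\infty$), this forces $u_c\equiv0$ by uniqueness --- contradicting \eqref{u-forward} (or $E(u_c)=E^*_C>0$). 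So the conclusion of the time-reversed step is ``$u_c$ vanishes,'' not merely ``the forward scattering norm is finite.'' With that repair your argument coincides with the paper's proof of Proposition~\ref{prop:critical}.
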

\begin{proof}
By the observation that the mass of $u_c$ conserves of time, we just need to prove the precompacness property in the space $\dot{H}^1(\R^4)$.

To do this, for any sequence $\{t_n\}\subset[0,\infty)$, we need
find a sequence $\{x_n\}\in \R^4$ such that the set
$\{u_c(t_n,x+x_n)\}$ is precompact in $\dot{H}^1(\R^4)$. If there
exists a subsequence $\{t_{n_k}\}$ of $\{t_n\}$ converges, the claim
follows from the time continuousness of $u_c$. If $t_n$ converges to
$\infty$, by profile decomposition and the discussion in Lemma
\ref{lem:5.1}, there exist a sequence
$(x_n,t'_n)\in\R^4\times[0,\infty)$ and a function $\vp\in H^1$ such
that
\begin{equation*}
\norm{u_c(t_n)-e^{-it'_n\Delta}\vp(x-x_n)}_{\dot{H}^1(\R^4)}\ra 0, \text{ as } n\ra \infty.
\end{equation*}
Hence we will be done if $t'_n$ converges to a constant. Thus we just need to preclude the two cases below:
\begin{enumerate}
\item[(1)] If $t'_n\ra -\infty$, by the Strichartz estimates, we have
 \begin{equation*}
\begin{split}
   & \norm{e^{it\Delta}u_c(t_n)}_{ST([0,\infty))}+\norm{\nabla e^{it\Delta}u_c(t_n)}_{L^3_{t,x}([0,\infty))} +\norm{|\nabla|^{1/2}
   e^{it\Delta}u_c(t_n)}_{L^3_{t,x}([0,\infty))} \\
   \les& \norm{\Japnb e^{it\Delta}\vp}_{S([-t'_n,\infty))} +\norm{u_c(t_n)-e^{-it'_n\Delta}\vp(x-x_n)}_{\dot{H}^1\cap\dot{H}^{1/2}(\R^4)}.
 \end{split}
 \end{equation*}
\end{enumerate}
Notice that the mass of $u_c$ and  $e^{-it'_n\Delta}\vp(x-x_n)$ is bounded, which together with interpolation implies $\dot{H}^{1/2}(\R^4)$
norm of their difference also tends to $0$.
Hence for any $\ep>0$, by Strichartz estimates
and the fractional chain rule, we have
\begin{equation*}
\begin{split}
     &\norm{u_c}_{L^6_{t,x}([t_n,\infty))}+\norm{u_c}_{L^4_{t,x}([t_n,\infty))}+\norm{\nabla u_c}_{L^3_{t,x}([t_n,\infty))}
     +\norm{|\nabla|^{1/2} u_c}_{L^3_{t,x}([t_n,\infty))}\\
     \les &  \ep+ \norm{u_c}^{2}_{L^6_{t,x}([t_n,\infty))}\left[\norm{\nabla u_c}_{L^3_{t,x}([t_n,\infty))}+ \norm{|\nabla|^{1/2}
     u_c}_{L^3_{t,x}([t_n,\infty))}\right] \\
          & +\norm{u_c}^{4/3}_{L^4_{t,x}([t_n,\infty))}\left[\norm{\nabla u_c}_{L^3_{t,x}([t_n,\infty))}+\norm{|\nabla|^{1/2}
          u_c}_{L^3_{t,x}([t_n,\infty))}\right],
   \end{split}
\end{equation*}
by taking $n$ sufficiently large.
By the standard continuous method, we can find that $\norm{u_c}_{ST([t_n,\infty))}<+\infty$, which contradicts with \eqref{u-forward}.
\begin{enumerate}
\item[(2)] If $t'_n\ra +\infty$, then we have \end{enumerate}
\begin{equation*}
\begin{split}
 &\norm{e^{it\Delta} u_c(t_n)}_{ST((-\infty,0))}+ \norm{\nabla e^{it\Delta}u_c(t_n)}_{L^3_{t,x}([0,\infty))} +\norm{|\nabla|^{1/2} e^{it\Delta}u_c(t_n)}_{L^3_{t,x}([0,\infty))}\\
 \les & \norm{\Japnb e^{it\Delta} \vp}_{S((-\infty, -t'_n))}+\norm{u_c(t_n)-e^{-it'_n\Delta}\vp(x-x_n)}_{\dot{H}^1\cap\dot{H}^{1/2}(\R^4)}\ra 0.
\end{split}
\end{equation*}
By a similar way as in (1), we have $\norm{u_c}_{ST(-\infty,t_n)}\ra 0$, which implies $u_c=0$ and contradicts with \eqref{u-forward}.

\end{proof}

By Proposition \ref{prop:critical} and the Arzela-Ascoli theorem, we
have
\begin{cor}\label{cor:compact}
Let $u_c(t,x)$ be a critical element, then there exists
$x(t):(0,\infty)\ra \R^4$ such that: for any $\eta>0$, there exists
$C(\eta)<\infty$ such that
$$
\int_{|x-x(t)|>C(\eta)} |\nabla u(t,x)|^2 dx+
\int_{|\xi|>C(\eta)}|\xi|^2|\hat{u}(t,\xi)|^2d\xi+\int_{|\xi|<\frac{1}{C(\eta)}}|\xi|^2|\hat{u}(t,\xi)|^2d\xi<\eta
$$
\end{cor}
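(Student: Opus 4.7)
The plan is to deduce each of the three tail bounds from the precompactness in Proposition~\ref{prop:critical} combined with conservation of mass. Using the center function of Proposition~\ref{prop:critical} (with a possible sign flip, which is immaterial), set $v_c(t,y):=u_c(t,y+x(t))$ so that the orbit $\mathcal{O}:=\{v_c(t,\cdot):t>0\}$ is precompact in $\dot{H}^s$ for every $s\in(0,1]$. After the change of variables $y=x-x(t)$, the first integral in the statement equals $\int_{|y|>C}|\nabla v_c(t,y)|^2\,dy$, and because spatial translation alters $\hat u_c$ only by a unimodular factor, the two frequency-tail integrals are unchanged when $u_c$ is replaced by $v_c$. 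Thus it suffices to control, uniformly in $t>0$, the gradient tail of $v_c$ at spatial infinity and the tails of $|\xi|\,|\hat v_c|$ at high and low frequencies.

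For the physical-space and high-frequency tails I will use the standard soft argument that precompactness in $\dot H^1$ yields uniform tightness of $\nabla v_c$ in $L^2$ in both the spatial and Fourier variables. Given $\eta>0$, choose a sufficiently fine finite net $\{v_c(t_1),\dots,v_c(t_N)\}\subset\dot H^1$ for $\mathcal O$. For each fixed $v_c(t_i)$ one may pick $R_i$ with $\int_{|y|>R_i}|\nabla v_c(t_i,y)|^2\,dy$ as small as desired; setting $R_1:=\max_i R_i$ and approximating any $v_c(t)$ by its closest net element via the triangle inequality in $\dot H^1$ then controls $\int_{|y|>R_1}|\nabla v_c(t,y)|^2\,dy$ uniformly in $t$. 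The high-frequency tail is handled identically in Fourier: by Plancherel, $\int_{|\xi|>N}|\xi|^2|\hat v_c(t,\xi)|^2\,d\xi=\|P_{>N}v_c(t)\|_{\dot H^1}^2$, which is small at each fixed net element for $N$ large, so the same net argument with Littlewood--Paley projections $P_{>N}$ replacing spatial cutoffs produces a radius $R_2$ valid uniformly in $t$.

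For the low-frequency tail precompactness is not needed at all; directly,
$$\int_{|\xi|<1/C}|\xi|^2|\hat u_c(t,\xi)|^2\,d\xi\;\leq\;C^{-2}\|\hat u_c(t,\cdot)\|_{L^2}^2\;\les\;C^{-2}M(u_c(0)),$$
by Plancherel and conservation of mass, which is smaller than $\eta$ once $C$ exceeds some $R_3$ of order $\sqrt{M(u_c(0))/\eta}$. Taking $C(\eta):=\max(R_1,R_2,R_3)$ completes the proof. No substantive obstacle is expected: the essential analytic content lies in Proposition~\ref{prop:critical}, and the corollary is the standard soft extraction of tightness in position and frequency from precompactness combined with mass conservation.
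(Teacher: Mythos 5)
Your proposal is correct and matches the paper's intent: the paper gives no detailed argument, simply asserting that the corollary follows from Proposition \ref{prop:critical} (precompactness modulo translation) together with a compactness/tightness principle, and your finite-net argument in $\dot H^1$ plus the Plancherel/mass-conservation bound for the low frequencies is exactly the standard way to fill that in. One cosmetic remark: the low-frequency tail could also be extracted from precompactness in $\dot H^s$ for $s<1$, but your direct use of the conserved (finite) mass is simpler and equally valid.
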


\subsection{Extinction of the critical element }
In this part, we will finish the proof of  the main theorem. We will use the interaction Morawetz estimate to exclude the critical
 element in Proposition \ref{prop:critical}.
\begin{thm}
Suppose $u$ is a critical element in Proposition \ref{prop:critical}
satisfying that
$$u\in\K^+=\{u(t)\in H^1:E(u)<m,\;K_{\al,\beta}(u)\geq 0,\;(\al,\beta)\in \Omega \}. $$
Then we have $u\equiv0$.
\end{thm}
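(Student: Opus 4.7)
The plan is to argue by contradiction and sandwich $\iint_{[0,T]\times\R^4}|u|^3$ between a uniform $O(1)$ upper bound coming from an interaction Morawetz estimate and an $\Omega(T)$ lower bound coming from compactness. Assume $u\not\equiv 0$; since $u\in\K^+$, Lemma \ref{lem:kgeq0} forces $E(u)>0$.

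For the upper bound I would adapt the truncated interaction Morawetz of Dodson to our combined nonlinearity. The defocusing $|u|^{10/3}u$ contribution carries a favorable sign and can be discarded, while the focusing cubic $-|u|^2u$ contribution is absorbed via the coercivity $\|\nabla u(t)\|_{L^2}<\|\nabla W\|_{L^2}$ from Lemma \ref{lem:keylemma} together with the sharp Sobolev inequality $\|\vp\|_{L^4}^4\le \|\nabla\vp\|_{L^2}^4/\|\nabla W\|_{L^2}^4$. Because the problem is posed in $H^1$, the mass $M(u)$ is finite and conserved, so the Hardy-type error stemming from $\Delta^2|x-y|$ in four dimensions is controlled pointwise in time without invoking the long-time Strichartz machinery that Dodson used to repair the double-Duhamel logarithmic loss in the pure $\dot H^1$ setting. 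The outcome I aim for is
\begin{equation*}
\|u\|_{L^3_{t,x}([0,T]\times\R^4)}^3\leq C\bigl(M(u),E(u)\bigr),\qquad T>0,
\end{equation*}
with $C$ independent of $T$.

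For the lower bound I claim $\inf_{t\geq0}\|u(t)\|_{L^3_x}\geq c_0>0$. If not, pick $t_n\to\infty$ with $\|u(t_n)\|_{L^3_x}\to 0$. By Proposition \ref{prop:critical} at $s=2/3$ and the embedding $\dot H^{2/3}(\R^4)\hookrightarrow L^3(\R^4)$, a subsequence of $u(t_n,\cdot-x(t_n))$ converges in $\dot H^{2/3}$ necessarily to $0$. Applying Proposition \ref{prop:critical} at $s=1$ and extracting further, $\|\nabla u(t_n)\|_{L^2}\to 0$, so by sharp Sobolev $\|u(t_n)\|_{L^4}\to 0$, and by Hölder interpolation $\|u(t_n)\|_{L^{10/3}}^{10/3}\les \|u(t_n)\|_{L^3}^{2}\|u(t_n)\|_{L^4}^{4/3}\to 0$. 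Lemma \ref{lem:kgeq0} then gives $E(u)=\lim_n E(u(t_n))=0$, contradicting $E(u)>0$. Integrating the lower bound produces
\begin{equation*}
\|u\|_{L^3_{t,x}([0,T]\times\R^4)}^3\;\geq\; c_0^3\, T\;\longrightarrow\;\infty,
\end{equation*}
contradicting the Morawetz bound, so $u\equiv 0$.

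The main obstacle is the Morawetz step: in four dimensions $\Delta^2|x-y|$ produces a singular Hardy-type term, and the focusing cubic nonlinearity interacts poorly with the Morawetz error — this is precisely where the logarithmic failure of the double Duhamel argument appears. The bounded mass in our setting is what will let me close the bootstrap directly (as the excerpt emphasizes), but carefully setting up the truncated weight and tracking all the error contributions is where the real work lies.
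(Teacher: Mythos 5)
Your lower-bound step (using Proposition \ref{prop:critical} at $s=2/3$ and $s=1$, the embedding $\dot H^{2/3}(\R^4)\hookrightarrow L^3(\R^4)$, and Lemma \ref{lem:kgeq0} to force $E(u)=0$) is sound, but the proof has a genuine gap at its central step: the claimed time-uniform bound $\|u\|_{L^3_{t,x}([0,T]\times\R^4)}^3\le C(M(u),E(u))$ is asserted, not proved, and the mechanism you sketch does not produce it. In the truncated interaction Morawetz, every error generated by the cutoff --- the terms where derivatives fall on the weight (the analogues of \eqref{m3}--\eqref{m5}), the $\Delta^2$ of the truncated weight (the analogue of \eqref{m6}), and the commutator errors created when you localize the sharp Sobolev inequality to absorb the focusing cubic --- is controlled only pointwise in time by the conserved mass and energy, so after integration these errors grow linearly in $T$. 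Finite mass does not repair this; in the paper it merely lets one bound the \eqref{m6}-type term by $K=|I|$ without long-time Strichartz estimates, it does not yield uniformity in $T$. Moreover, your absorption scheme ignores the momentum cross term \eqref{m2}, which carries the wrong sign and cannot be handled by the global sharp Sobolev inequality alone; one needs, for each cube scale and each center $s$, a Galilean modulation $u\mapsto e^{ix\cdot\xi(s,t)}u$ killing the localized momentum, and then a localized Sobolev argument applied to $\psi v$, which itself creates further errors of size $O(K)$ (the term \eqref{improve2}).

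Because of this, the paper does not prove (and does not need) a $T$-uniform $L^3_{t,x}$ bound. Instead it averages the truncated Morawetz quantity over scales $R\in[1,R_0]$ with measure $dR/R$, keeps only the coercive localized piece, and obtains
$$R_0^4+K\gtrsim \ln R_0\int_0^K\Big[\int_{|x-x(t)|\le R_0^{1/4}}|u(t,x)|^4dx\Big]\Big[\int_{|y-x(t)|\le R_0^{1/4}}|u(t,y)|^2dy\Big]dt,$$
so the errors of size $K$ are beaten by the logarithmic gain upon choosing $R_0\sim K^{1/5}$; compactness (Corollary \ref{cor:compact}) then forces the localized $L^4$ or $L^2$ mass to degenerate along a sequence of times, whence $u\equiv0$. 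This is exactly the ``logarithmic'' issue you flag at the end of your proposal; as written, your argument replaces the hard step by the desired conclusion, so it does not constitute a proof. To fix it you would either have to carry out the scale-averaging/log-gain argument (essentially reproducing the paper's proof, with your $L^3$ lower bound replaced by a lower bound on the localized $L^4\times L^2$ quantity), or genuinely establish a $T$-uniform interaction Morawetz estimate below the ground state in $4$D, which is not available by the route you describe.
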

\begin{proof}
Let $\psi\in C^\infty_0(\R)$ be a radial function such that $\psi(x)=1$ when  $|x|\leq1$ and  $\psi(x)=0$ when $|x|>2$. Next, let \beq\label{phi} \phi(x-y)=\int \psi^2(x-s) \psi^2(y-s)ds. \eeq
Hence $\phi(x)$ is supported on $|x|\leq4$. For $1\leq R\leq R_0$
define the interaction Morawetz potential \beq\label{mrt}
M_R(t)=\int |u(t,y)|^2   \phi\Big(\frac{|x-y|}{R}\Big)   (x-y)_j
{\rm Im}[\overline{u}\partial_j u](t,x)dxdy \eeq and let
\beq\label{mt} M(t)=\int_{1\leq R\leq R_0} \frac{1}{R} M_R(t)dR.
\eeq

By the H\"{o}lder inequality and the Sobolev embedding theorem, we have
\beq\label{e-mt}
\sup_{t\in \R} |M_R(t)|\lesssim R^4.
\eeq

By a direct calculation, we have
\begin{align}\nonumber
&\frac{d}{dt}M_R(t)\\\label{m1}=&2\int |u|^2(t,y)
\phi\Big(\frac{|x-y|}{R}\Big)\Big[|\nabla
u|^2-|u|^4+\frac{4}{5}|u|^\frac{10}{3} \Big](t,x) dxdy\\\label{m2}
&-2\int{\rm Im}[\overline{u}\partial_j
u](t,y)\phi\Big(\frac{|x-y|}{R}\Big){\rm Im}[\overline{u}\partial_j
u](t,x)dxdy\\
\label{m3} &+2 \int
|u|^2(t,y)\phi'\Big(\frac{|x-y|}{R}\Big)\frac{(x-y)_j(x-y)_k}{|x-y|R}
{\rm Re}[(\partial_j\overline{u}\partial_ku)](t,x)dxdy\\\label{m4}
  &+\int |u|^2(t,y)\phi'\Big(\frac{|x-y|}{R}\Big)\frac{|x-y|}{R}\Big[|\nabla u|^2-\frac{1}{2}|u|^4+\frac{2}{5}|
  u|^\frac{10}{3}\Big](t,x)dxdy\\\label{m5}
 &-2\int
 {\rm Im}[\overline{u}\partial_k](t,y)\phi'\Big(\frac{|x-y|}{R}\Big)\frac{(x-y)_j(x-y)_k}{|x-y|R}{\rm Im}[\overline{u}\partial_ju](t,x)dxdy\\\label{m6}
  &-\frac{1}{2}\int |u|^2(t,y)\Delta\Big[4\phi(\frac{|x-y|}{R})+\phi'\Big(\frac{|x-y|}{R}\Big)\frac{|x-y|}{R}\Big] |u|^2(t,x)dxdy.
\end{align}

By Proposition \ref{prop:critical} and conservation of mass, we have
$u(t,x)\in L_t^\infty H^1(\R^4)$. Let $I=(0,K)\subseteq\R$ and
$1\leq R_0\leq K^\frac{1}{5}$.

First we consider \eqref{m3},  \eqref{m4} and \eqref{m5}. From the
definition of $\phi$, we have $$ \int_{1\leq R\leq R_0} \frac{1}{R}
\Big|\phi'\Big(\frac{|x-y|}{R}\Big)
\frac{(x-y)_j(x-y)_k}{|x-y|R}\Big| dR \les\int_{1\leq R\leq R_0}
\frac{1}{R}
 \Big|\phi'\Big(\frac{|x-y|}{R}\Big)\Big|   \frac{|x-y|}{R} dR  \les 1,
$$
and is supported on $|x-y|\les R_0$. Hence we have
\beq\label{e-m345}\Big| \int_I\int_{1\leq R\leq
R_0}\frac{1}{R}[\eqref{m3}+  \eqref{m4} + \eqref{m5}]dRdt\Big| \les K.
\eeq
 Next we consider \eqref{m6}. Notice that
\begin{align}\nonumber
     & \left|\int_{1\leq R\leq R_0} \frac{1}{R} \Delta\Big[4\phi\Big(\frac{|x-y|}{R}\Big)+\phi'\Big(\frac{|x-y|}{R}\Big) \frac{|x-y|}{R}\Big] dR \right|
      \\\nonumber
 =& \left|\int_{1\leq R\leq R_0} \frac{1}{R} \left[\phi'''\Big(\frac{|x-y|}{R}\Big) \frac{|x-y|}{R^3} +\phi''\Big(\frac{|x-y|}{R}\Big)\frac{6}{R^2}
 +\phi'\Big(\frac{|x-y|}{R}\Big)\frac{15}{R|x-y|}\right] dR\right|\\
       \les & \frac{1}{1+|x-y|^2}+\frac{1}{|x-y|+|x-y|^2}.
    \end{align}
This together with the Hardy-Littlewood-Sobolev inequality, the Sobolev theorem and $u\in L_t^{\infty}H_x^1$ implies that
\beq\label{e-m6}
\begin{split}
&\int_{I}\int_{1\leq R\leq R_0} \eqref{m6} \frac{1}{R}dRdt  \\
\les &\int_I \int|u|^2(t,x) \left( 1+\frac{1}{|x-y|^2} \right)|u|^2(t,y)dxdydt\\
 \les& K\left( \|u\|^4_{L^\infty_t L_x^2}+\|u\|^4_{L^\infty_t L_x^{\frac{8}{3}}}           \right)\\
 \les &K.
 \end{split}
\eeq

Now we will estimate \eqref{m1} and \eqref{m2}.
 By abuse of notations, we will write $\psi(x)=\psi(|x|)$, for $x\in\R^4.$
 Notice that for each $s$ and
$t$, there exists a $\xi(s,t)$ such that

\beq
\begin{split}
&\int \psi^2\Big(\frac{x}{R}-s\Big){\rm Im}[\overline{e^{ix\cdot\xi(s,t)}u} \nabla(e^{ix\cdot\xi(s,t)}u )](t,x) dx\\
=&\int \psi^2\Big(\frac{x}{R}-s\Big){\rm Im}[\overline{u} \nabla u](t,x) dx+\xi(s,t)\int\psi^2\Big(\frac{x}{R}-s\Big)|u|^2(t,x)dx \\
=&0.
\end{split}
\eeq
Moreover, for any $s,\;t$ and $\xi(s,t)$, the quantity
$$
\int\psi^2\Big(\frac{x}{R}-s\Big)\psi^2\Big(\frac{y}{R}-s\Big)\Big[|\nabla
u(t,x)|^2 |u(t,y)|^2- {\rm Im}[\overline{u}\nabla u](t,x) {\rm
Im}[\overline{u}\nabla u](t,y) \Big]dxdy
$$
is  invariant under the Galilean transformation $u(t,x)\mapsto e^{ix\cdot\xi(s,t)}u(t,x)$. Therefore, for any given $s,t$ we can choose
 $\xi(s,t)$ to eliminate the momentum squared terms.
Then, from integration by parts, we have
\begin{align}
     & \int\psi^2\Big(\frac{x}{R}-s\Big)\big[|\nabla  (e^{ix\cdot\xi(s,t)} u(t,x))|^2-|u(t,x)|^4\big ]dx \\
    =& \int     \Big|\nabla\Big(\psi\Big(\frac{x}{R}-s\Big)e^{ix\cdot\xi(s,t)} u(t,x)\Big)\Big|^2dx
    -     \int\psi^2\Big(\frac{x}{R}-s\Big)|u(t,x)|^4dx\label{improve1}       \\
     & + \int |u(t,x)|^2 \psi\Big(\frac{x}{R}-s\Big)\Delta_x\psi\Big(\frac{x}{R}-s\Big) dx .\label{improve2}
    \end{align}
Since $u\in \K^+$, we have $\|u\|_{L^\infty_t \dot{H}^1}\leq
(1-\bar{\delta})\|W\|_{\dot{H}^1}$ by Lemma \ref{lem:keylemma}.
Therefore, by the sharp Sobolev inequalities, we have \beq
\|u\|_{L^4(\R^4)} \leq \frac{\|\nabla
u\|_{L^2(\R^4)}}{\|W\|_{L^4(\R^4)}}\leq
\frac{1}{\|W\|_{L^4(\R^4)}}(1-\bar{\delta})\|W\|_{\dot{H}^1}\leq
(1-\bar{\delta})\|W\|_{L^4(\R^4)}. \eeq Let
$v=e^{ix\cdot\xi(s,t)}u$, then by H\"{o}lder inequalities we have
\beq\begin{split}
\eqref{improve1} \geq &\|\nabla(\psi v)\|^2_{L^2}-(1+\bar{\delta})\|\psi^2|u|^4\|_{L^1}+\bar{\delta}\|\psi^2|u|^4\|_{L^1}\\
 \geq & \|\nabla(\psi v)\|^2_{L^2}- (1+\bar{\delta})(1-\bar{\delta})^2\|\nabla(\psi v)\|^2_{L^2}+\bar{\delta}\|\psi^2|u|^4\|_{L^1}\\
 \geq &\bar{\delta}\|\psi^2|u|^4\|_{L^1}.
\end{split}
\eeq For \eqref{improve2}, noting that \beq
\int\psi\Big(\frac{x}{R}-s\Big)\Big|\Delta_x\psi\Big(\frac{x}{R}-s\Big)\Big|
\cdot\psi^2\Big(\frac{y}{R}-s\Big) ds \les
\psi\Big(\frac{x-y}{4R}\Big)\left(\frac{1}{R^2}+\frac{1}{R|x-y|}\right),
\eeq we have
\begin{align}\label{m12}
 &\Big|\int_I\int_{1\leq R\leq R_0} \frac{1}{R}\int\eqref{improve2}|u|^2(t,y)dydsdRdt\Big|
 \\\nonumber
 \les &\int_I\int\int_{1\leq R\leq R_0} \frac{1}{R}|u|^2(t,x) \left[\psi\Big(\frac{|x-y|}{4R}\Big)
 \Big(\frac{1}{R^2}+\frac{1}{R|x-y|}\Big)\right]|u|^2(t,y) \;dR\;dx\;dy\;dt\\
 \nonumber
 \les& \int_I\int |u|^2(t,x) \left(1+\frac{1}{|x-y|^2}\right)|u|^2(t,y)dxdydt\\\nonumber
 \les &K.
\end{align}
For $|x-y|\leq R_0^\frac{1}{2}$ and $R\geq R_0^\frac{1}{2}$, we have
\beq \int \psi^2\Big(\frac{x}{R}-s\Big)\psi^2\Big(\frac{y}{R}-s\Big)
ds \gtrsim 1. \eeq Thus, we have \beq\label{e-m1}
\begin{split}
&\int_I\int_{R_0^\frac{1}{2}\leq R\leq R_0} \frac{1}{R}\int |u|^2(t,y)  \psi^2\Big(\frac{x}{R}-s\Big)\psi^2\Big(\frac{y}{R}-s\Big)|u|^4(t,x)
dsdxdydRdt\\
\gtrsim & \int_I\int_{R_0^\frac{1}{2}\leq R\leq R_0} \frac{1}{R}\int_{|x-y|\leq R_0^\frac{1}{2}} \int |u|^2(t,y) |u|^4(t,x)  dxdydRdt\\
\gtrsim  & \ln{R_0} \int_I\int_{|x-y|\leq R_0^\frac{1}{2}} \int |u|^2(t,y) |u|^4(t,x)  dxdydt\\
\gtrsim & \ln{R_0} \int_I \left[\int_{|x-x(t)|\leq R_0^\frac{1}{4}}|u|^4(t,x)dx \cdot \int_{|y-x(t)|\leq R_0^\frac{1}{4}}|u|^2(t,y)dy \right]dt.
\end{split}
\eeq This together with \eqref{e-mt},\eqref{e-m345} and
\eqref{e-m6} implies \beq
\begin{split}
R_0^4 \gtrsim &\int_{1\leq R \leq R_0} \frac{1}{R} \sup_t |M_R((t))|  dR \\
\gtrsim & \left|\int_I \int_{1\leq R\leq R_0}\frac{1}{R}    \frac{d}{dt} M_R(t) dt dR  \right|                            \\
\gtrsim &\ln{R_0} \int_I \left[\int_{|x-x(t)|\leq R_0^\frac{1}{4}}|u|^4(t,x)dx \cdot \int_{|y-x(t)|\leq R_0^\frac{1}{4}}|u|^2(t,y)dy \right]dt- K.
\end{split}
\eeq
Since $1\leq R_0\leq K^\frac{1}{5}$, we have
\beq\label{000}
\ln{R_0}\int_I \left[\int_{|x-x(t)|\leq R_0^\frac{1}{4}}|u|^4(t,x)dx \cdot \int_{|y-x(t)|\leq R_0^\frac{1}{4}}|u|^2(t,y)dy \right]dt \les K.
\eeq
Furthermore, \eqref{000} implies that there exists a sequence $t_n\in R$ such that either

\beq\label{0001} \int_{|x-x(t_n)|\leq R_{0,n}^{\frac{1}{4}}}
|u|^4(t_n,x)dx\ra 0,\;R_{0,n}\ra \infty, \eeq or \beq\label{0002}
\int_{|y-x(t_n)|\leq R_{0,n}^{\frac{1}{4}}} |u|^2(t_n,y)dy\ra
0,\;R_{0,n}\ra \infty. \eeq By Corollary \ref{cor:compact} and the
Sobolev inequalities, \eqref{0001} and \eqref{0002} implies
$u\equiv0$.
\end{proof}

\textbf{Acknowledgments:}The authors thank the referee and the associated editor for their invaluable comments
and suggestions which helped improve the paper greatly. This work was supported in part by the National Natural Science Foundation of China under grants 11671047.

\begin{center}

\end{center}

\end{document}